\def\ol#1{{\overline{#1}}}
\def\wt#1{{\widetilde{#1}}}
\title [Weil-Petersson geometry ]{Weil-Petersson geometry for
families of hyperbolic conical  Riemann Surfaces}
\author{Georg Schumacher\and Stefano Trapani}
\date  {} 
\newtheorem{theorem}             {Theorem}    [section]
\newtheorem{corollary}  [theorem]{Corollary}
\newtheorem{lemma}      [theorem]{Lemma}
\newtheorem{proposition}[theorem]{Proposition}
\newtheorem{definition}    [theorem]{Definition}
\newtheorem{remark}    [theorem]{Remark}      %
\newenvironment{theorem*}[1]{{\bf Theorem.(#1)} \begin{it}}
{\end{it}}
\newcounter    {assertcount}
\newcommand{\cT}{\mathcal{T}}
\newcommand{\cX}{\mathcal{X}}
\newcommand{\cK}{\mathcal{K}}
\newcommand{\pt}{\partial}
\newcommand{\ii}{\sqrt{-1}}
\newcommand{\C} {{\mathbb  C}}
\newcommand{\N} {{\mathbb  N}}
\newcommand{\R} {{\mathbb  R}}
\newcommand{\Z} {{\mathbb  Z}}
\newcommand{\Q} {{\mathbb  Q}}
\newcommand{\ga}{g_{\bf a}}
\newcommand{ \A }{{\bf a}}
\renewcommand{\epsilon}{\varepsilon}
\def\wp{Weil-Pe\-ters\-son }
\def\tei{Teich\-m\"{u}l\-ler }
\def\cT{\mathcal{T}}
\def\mapa#1{{\vrule height 10pt width 2pt depth 0pt}\marginpar{\begin{sloppy}
\footnotesize #1 \end{sloppy}}}
\begin{document}
\subjclass{32G15, 53C21, 14D22} \keywords{Hyperbolic cone metric, Riemann
surface, weighted divisor, \wp metric, holomorphic quadratic differentials,\tei
theory}

\begin{abstract}
We study the \wp geometry for holomorphic  families of  Riemann Surfaces
equipped with  the unique conical metric of constant curvature $-1$.
\end{abstract}

\maketitle

\section{Introduction} \label{Int}

Recently hyperbolic structures on weighted punctured Riemann surfaces
gained major attention. Hyperbolic metrics on weighted punctured Riemann
surfaces by definition have conical singularities at the punctures, where
the cone angles are between $0$ and $2 \pi$, corresponding to weights
between one and zero. Conical metrics of constant negative curvature (with
fixed weights) induce new structures on the \tei spaces of punctured
Riemann surfaces. Tan, Wong and Zhang \cite{ T-W-Z} showed the existence
of corresponding Fenchel-Nielsen coordinates, proved a McShane identity
for this case and investigated the induced symplectic structure. In this
way they generalize results of Mirzakani \cite{ Mz} to this situation
(cf.\ \cite{D-N}). Conical metrics on punctured spheres were studied by
Zograf and Takhtajan in \cite{TaZo}, who introduce Kähler structures on
the moduli spaces depending on cone angles in the context of Liouville
actions. From the algebraic geometry point of view, Hassett \cite{Ha}
introduced a hierarchy of compactifications of the moduli space of
punctured Riemann surfaces according to the assigned weights of the
punctures. These spaces interpolate between the classical Deligne-Mumford
compactifications of the moduli spaces of Riemann surfaces with and
without punctures. Conical hyperbolic metrics had been studied  by Heins
\cite{h}, and constructed by McOwen \cite{Mc-O} and Troyanov \cite{Tb}
using the method of Kazdhan and Warner \cite{ K-Wc}.

By definition, a weighted punctured Riemann surface $(X, {\bf a})$ is a
compact Riemann surface $X$ together with an an $\R$-divisor ${\bf a} =
\sum_{j=1}^{n} a_j p_j$ with weights $0<a_j\leq 1$ at the punctures $p_j$.
The necessary and sufficient condition for the existence of a hyperbolic
conical metric according to \cite{Mc-O,Tb}  is that the statement of the
Gauss-Bonnet theorem holds, i.e.\ the degree of $K_X + { \bf a}$ is
positive, where $K_X$ denotes the canonical divisor of $X$. In this case
the cone angles are $2 \pi (1- a_j)$.

Our aim is to study the \wp geometry in the conical case, and develop a
theory parallel to the classical one. We show the existence of a \wp
K\"{a}h\-ler form of class $C^{\infty}$,  which descends to to the moduli
space. Let $\mathcal{X} \rightarrow S$ be the universal family, or any
other holomorphic family of weighted punctured Riemann surfaces. It turns
out that the  classical Wolpert's formula, \cite[Corollary 5.11]{Wo},
holds in our case as well, i.e.\ the \wp form is the push forward of the
form $ 2 \pi^2 c_1({K_\mathcal{{X}/S}}^{-1},g_{\bf{a}})$, where
$({K_\mathcal{{X}/S}}^{-1},g_{\bf{a}})$ is the relative anti-canonical
line bundle, equipped with the family of hyperbolic conical metrics on the
fibers. From this we derive the K\"{a}h\-ler property of the \wp metric.

For rational weights the bundle $K_{\cX/S}+ {\bf a}$ defines a determinant
line bundle on the base space $S$, which carries a Quillen metric
according to the theorems of Quillen \cite{qui}, Zograf-Takhdajan
\cite{TaZo0}, and Bismut-Gillet-Soulé \cite{bgs}, once smooth metrics are
chosen on $K_{\cX/S}+ {\bf a}$. We show that the conical metrics on the
fibers induce a $C^\infty$ metric on the determinant line bundle, which
descends to the moduli space. As in the classical case, its curvature is
the generalized \wp form.

We also prove the formula for the curvature tensor of the \wp metric for
Riemann surfaces with conical singularities. In the classical case the
curvature was computed in \cite{Roy,F-T,Wo}. Our formula holds for the
case of weights $>1/2$, which is also the range, where Fenchel-Nielsen
coordinates exist. It includes also the case of orbifold singularities of
degree $m>2$.

Although hyperbolic conical metrics are well understood from the
standpoint of hyperbolic geometry, the dependence upon holomorphic
parameters poses essential difficulties. For this reason it was necessary
to introduce an ad-hoc definition of harmonic Beltrami differentials in
our previous paper \cite{S-T}, on which a \wp inner product could be
based. Our present results are valid with no restrictions on the weights,
in particular they include the interesting cases of weights between $1/2$
and $1$, which arise in the case of finite group quotients. Most results
are known for cusps i.e.\ punctures with zero cone angle, however our
approach seems to be only suitable for positive cone angles so that we
avoid mixed cases.

Acknowledgement. The first named author would like to thank Inkang Kim for
stimulating discussions. The authors would also like to thank the referee
for his or her helpful comments.

\section{Hyperbolic conical metrics} \label{conic}
Let $X$ be a compact Riemann surface with $n$ punctures $p_1, \ldots. p_n$, and
weights $0<a_j\leq 1$ for $j=1,\ldots,n$. We denote by ${\bf a}= \sum_j a_j
p_j$ the corresponding $\R$-divisor and by $(X,{\bf a})$ the weighted punctured
Riemann surface. We say that a hermitian metric of class $C^\infty$ on the
punctured Riemann surface, has a cone singularity of weight ${\bf a} $, if in a
holomorphic local coordinate system centered at $p_j$ the metric is of the form
$( \rho(z)/|z|^{2 a_j}) |dz|^2$ for $ 0<a_j <1$, whereas it is of the form
$(\rho(z)/ |z|^2 \log^2(|1/z|^{2}) )  |dz|^2$  if $a_j =1$. Here $\rho$ is
continuous at the puncture and positive. The cone angle is  $ 2 \pi (1- a_j)$,
including the complete case with angle zero.  Let ${K_X} $ be the canonical
divisor of $X$; the weighted punctured Riemann surface $(X, {\bf a})$ is called
\textit{stable}, if the the degree of the divisor $K_X +  { \bf a}$ is
positive. In this case, by a result of McOwen and Troyanov \cite{Mc-O},
\cite{Mc-O2}. \cite{T}, there exist a unique conical metric $g_{\bf{a}}$ on $X$
in the given conformal class, which has constant curvature $-1$ and presribed
cone angles. Moreover  $ {\rm Vol}(X,g_{\bf{a}})/\pi = \deg(K_X + { \bf a}) = -
\chi(X,\A)$. Where by definition $ \chi(X,\A) = \chi(X) - \sum a_j $ is the
Euler-Poincar\'e characteristic of the weighted punctured Riemann surface
$(X,\A)$.

At a non-complete conical puncture, we consider an emanating geodesic and
see that on a neighborhood of the puncture the hyperbolic metric is
isometric to a classical cone metric as obtained from the unit disk by
removing a sector and identifying the resulting edges. So a posteriori a
conical metric satisfies a somewhat stronger regularity condition than
predicted in terms of the partial differential equation for hyperbolicity.

\begin{remark}\label{re:grealana}
Let $(X,\A)$ be a weighted Riemann surface and $p_j$ a puncture with $0 < a_j <
1$ for all $1 \leq j \leq n$. Then there exist a local coordinate function $z$
near $p_j$ such that $\ga=(\rho(z)/|z|^{2a_j})|dz|^2$, where
$\rho(z)=\eta(|z|^{2(1-a_j)})$ for some positive, real analytic function
$\eta$.
\end{remark}

The dependence of the hyperbolic cone metrics on the weights is characterized
as follows.
\begin{proposition}\label{depweight}
Let  $ a_j(k) $ be an increasing sequence of weights with $\R$-divisors
${\bf a}(k)$ on $X$. Suppose that\/ $\deg(K_X + {\bf a }(k)) > 0$ for all
$ k \in \N $ and that $ a_j(k)  \rightarrow  a_j$, as $ k \rightarrow
\infty$. Then $g_{{\bf a}(k)} $ converges to $g_{ \bf a}$ uniformly on
compact sets away from the punctures. Moreover  the  sequence of functions
$ g_{{\bf a}(k)}/{g_{{ \bf a}}}$, converges to the constant function $1$
in $L^1(X, g_{ \bf a})$. \label{limit}
\end{proposition}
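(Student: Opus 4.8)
\smallskip
\noindent\textbf{Proof strategy.}
The plan is to compare $g_{{\bf a}(k)}$ with the limit metric through their conformal factor. Note first that $\deg(K_X+{\bf a})\ge\deg(K_X+{\bf a}(1))>0$ since the degree is increasing in the weights, so by \cite{Mc-O} the metric $g_{\bf a}$ exists and the volume formula ${\rm Vol}(X,g_{\bf b})=\pi\deg(K_X+{\bf b})$ applies to $g_{\bf a}$ and to every $g_{{\bf a}(k)}$. Write $g_{{\bf a}(k)}=e^{2v_k}\,g_{\bf a}$. On $X\setminus\{p_1,\dots,p_n\}$ both metrics are $C^\infty$ with Gauss curvature $-1$, so $v_k$ satisfies there the semilinear equation $\Delta_{g_{\bf a}}v_k=e^{2v_k}-1$. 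Next I would read off the local behaviour near a puncture $p_j$: by Remark~\ref{re:grealana} (and the sharper normal form recorded after the volume formula) one gets $e^{2v_k}=c_{jk}(z)\,|z|^{2(a_j-a_j(k))}$ with $c_{jk}$ continuous and positive (with an extra power of $\log|z|$ in the case $a_j=1$), and since the weights are increasing, $a_j-a_j(k)\ge 0$, so $v_k$ is bounded above near every puncture and tends to $-\infty$ at those $p_j$ with $a_j(k)<a_j$. Hence $\sup_{X\setminus\{p_j\}}v_k$ is finite and attained at an interior point $q$, where $\Delta_{g_{\bf a}}v_k(q)\le 0$ forces $e^{2v_k(q)}\le 1$; I conclude $v_k\le 0$, i.e.\ $g_{{\bf a}(k)}\le g_{\bf a}$, for every $k$.

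The $L^1$-statement is then almost immediate: from $0\le 1-e^{2v_k}$ and the volume formula,
$$\norm{\,g_{{\bf a}(k)}/g_{\bf a}-1\,}_{L^1(X,g_{\bf a})}=\int_X(1-e^{2v_k})\,dV_{g_{\bf a}}=\pi\big(\deg(K_X+{\bf a})-\deg(K_X+{\bf a}(k))\big)=\pi\sum_j(a_j-a_j(k))\longrightarrow 0.$$
For the convergence away from the punctures I would first establish monotonicity in $k$: for $k\le\ell$, writing $g_{{\bf a}(\ell)}=e^{2w}g_{{\bf a}(k)}$ gives $\Delta_{g_{{\bf a}(k)}}w=e^{2w}-1$, and now $w$ is bounded below near each puncture (with $w\to+\infty$ where the weight strictly increases), so its infimum is attained at an interior point and the reverse inequality yields $w\ge 0$, i.e.\ $g_{{\bf a}(k)}\le g_{{\bf a}(\ell)}$. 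Thus $(v_k)$ is nondecreasing and bounded above by $0$. On a compact set $K\Subset X\setminus\{p_j\}$ one then has the uniform bounds $v_1\le v_k\le 0$ and a uniform $L^\infty$-bound on the right-hand side $e^{2v_k}-1$; interior elliptic estimates give a uniform $C^{1,\alpha}$ bound on $K$, and, the sequence being monotone, $v_k\to v_\infty$ in $C^1_{\mathrm{loc}}(X\setminus\{p_j\})$, with $\Delta_{g_{\bf a}}v_\infty=e^{2v_\infty}-1$ and $v_\infty\le 0$.

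Finally I would identify $v_\infty\equiv 0$. Since $e^{2v_k}\to e^{2v_\infty}$ pointwise a.e.\ with $0\le e^{2v_k}\le 1\in L^1(X,g_{\bf a})$, dominated convergence together with the computation above gives
$$\int_X e^{2v_\infty}\,dV_{g_{\bf a}}=\lim_{k\to\infty}{\rm Vol}(X,g_{{\bf a}(k)})=\pi\deg(K_X+{\bf a})={\rm Vol}(X,g_{\bf a})=\int_X 1\,dV_{g_{\bf a}};$$
as $e^{2v_\infty}\le 1$ this forces $e^{2v_\infty}=1$ almost everywhere, hence $v_\infty\equiv 0$ by continuity of $v_\infty$ on $X\setminus\{p_j\}$. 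Therefore $g_{{\bf a}(k)}\to g_{\bf a}$ uniformly on compact subsets of $X$ away from the punctures, which completes the argument.

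The step I expect to be the real obstacle is the rigorous justification of the two extremum arguments at the conical points: one must pin down the precise local asymptotics of the competing metrics so that $v_k$, resp.\ $w$, genuinely has the claimed one-sided bound there and the relevant extremum is attained in the open surface rather than approached only along a puncture, and in the borderline situations (coinciding weights, or a cusp in the limit) one should instead invoke a weak maximum principle after checking that the conical point contributes no distributional term to the equation because $a_j>0$. The interior elliptic estimates, the convergence theorems, and the cited volume identity of McOwen--Troyanov are all routine.
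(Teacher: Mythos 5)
Your argument is correct in substance but follows a genuinely different route from the paper's. The paper takes the monotone comparison $0<\Psi_k\le\Psi_{k+1}\le 1$ (your $v_k\le v_{k+1}\le 0$, with $\Psi_k=e^{2v_k}$) as known from Proposition 2.4 of \cite{S-T} rather than re-deriving it by the maximum principle as you do; the one place in your write-up that genuinely needs care is exactly that comparison step at the conical points (removability of the puncture for the bounded solution when the weights coincide there, so that the extremum is not merely approached along a puncture), and you flag this correctly --- it is precisely the content of the cited result. Where the two proofs really diverge is in how they identify the limit. The paper works with the subharmonic functions $-\log\Psi_k$, shows that their decreasing limit $\delta$ extends to a subharmonic function on all of the compact surface $X$ (which in the cusp case $a_{j_0}=1$ requires the correction term $(1-a_j(k))\log|z|^2$ and an upper-semicontinuous-envelope argument), concludes that $\delta$ is constant, and only then uses volume convergence to see that the constant is $0$. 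You bypass the constancy argument entirely: the Gauss--Bonnet identity ${\rm Vol}(X,g_{\bf b})=\pi\deg(K_X+{\bf b})$ gives the $L^1$ statement in one line from $v_k\le 0$ alone, and, applied once more together with $e^{2v_\infty}\le 1$, forces $v_\infty\equiv 0$ without ever knowing a priori that $v_\infty$ is constant. This is arguably cleaner and avoids the delicate extension-across-a-cusp step; what the paper's potential-theoretic route buys in exchange is that it needs no elliptic estimates and no local normal form of the metrics beyond what is already in \cite{S-T}, only Dini's lemma, dominated convergence, and standard facts about decreasing limits of subharmonic functions.
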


\begin{proof}
In Proposition 2.4 in \cite{S-T} we defined $\Psi_k = g_{{\bf a}(k)}/g_{{
\bf a} }$, then  $ 0 < \Psi_k  \leq \Psi_{k+1} \leq 1$ as we proved there,
and $- \log(\Psi_k)$ is a decreasing  sequence of subharmonic functions on
the complement of the punctures.  Therefore $-\log(\Psi_k)$ converges
pointwise to a subharmonic function $\delta \geq 0$ on the complement of
the punctures.  By Proposition 2.5 in \cite{S-T},  the function $ \delta$
is identically equal to $0$ in a neighborhood  of each puncture $p_j$ with
$a _j < 1$.  Moreover if $a _j < 1$ for all $j$ then $\delta \equiv 0$ and
the convergence is uniform on compact sets by Dini's lemma. (Observe that
the argument in the proof of Proposition 2.5 in \cite{S-T} is local).
Suppose that $a _{j_0} =1$ for some $j_0$,  and consider the functions $
\delta_k = - \log(\Psi_k) + (1- a(k)_{j_0}) \log(|z|^2)$, on  an open
neighborhood $\mathcal{U}_{j_0} $ of $p_{ j_0}$. By the local expression
of each function $\Psi_k$ near $p_{j_0}$, we have that the functions
$\delta_k$ are subharmonic and uniformly bounded from above, so each
function $\delta_k$ extends to a subharmonic function on
$\mathcal{U}_{j_0}$, moreover the function $ \delta' $ which is the upper
semi-continuous envelope of $\limsup \delta_k$ is also subharmonic on
$\mathcal{U}_{j_0}$, (cf.\ \cite{Kl}). Hence $\delta = \delta'$ on
$\mathcal{U}_{j_0} \backslash \{0 \}. $  In other words the function
$\delta$ extends to a subharmonic function on all of $X$, therefore
$\delta \equiv c$ is constant. By the dominated convergence theorem the
sequence $\Psi_k$ converges to $e^{-c}$ in $L^1(X, g_{ \bf a}  )$.  Since
${\rm Vol}({ g_{{\bf a}(k)}})$ converges to ${\rm Vol}({ g_{{\bf a} }})$,
we have $e^{-c} =1$.
\end{proof}

We consider the classical \tei space $\cT_{\gamma,n}$ of (marked) Riemann
surfaces of genus $\gamma$ with punctures $p_1,\ldots,p_n$.  We denote by
$\Pi:\cX_{\gamma,n} \to \cT_{\gamma,n}$ the universal family. The
punctures on the fibers $\cX_s =\Pi^{-1}(s)$ are given by $n$ holomorphic
sections $\sigma_1(s),\ldots, \sigma_n(s)$; $s \in \cT_{\gamma,n}$, where
for all $s$ the values are pairwise distinct. Constant weights $0< a_j
\leq 1$ are assigned to the $\sigma_j(s)$, and the corresponding real
divisors are denoted by ${ \bf a}(s) = \sum_{j=1}^n a_j \sigma_j(s)$. The
resulting family of weighted punctured surfaces is denoted by $ \Pi : {(
\mathcal{X}_{\gamma,n}, {\bf a})} \rightarrow \cT_{\gamma,n}$. We assume
that the fibers are stable and endowed with the hyperbolic conical metrics
$g_{\bf{a}}(s)$. The complete case of weights one is well-understood, and
since the essential arguments will be local, we may assume that for all
weights $0<a_j<1$ holds.

We will show that the conical hyperbolic metrics define new Kähler
structures on the \tei and moduli spaces of punctured Riemann surfaces
depending on the assigned weights.

For short we will write $\Pi:\cX \to S$ for any holomorphic family of
punctured Riemann surfaces over a complex manifold $S$ with holomorphic
sections $\sigma_i(s)$. Our arguments will be local with respect to the
base.

When considering the variation of conical metrics and defining the induced
hermitian structure on the \tei space, we may assume that $S = \{ s \in \C
: |s| < 1 \}$.

Denote by $X$ the central fiber ${\mathcal{X}}_0$. In order to introduce
Sobolev spaces, and to use the theory of elliptic equations depending upon
parameters \cite{A}, we need to fix a differentiable trivialization of the
family. Our method of choice is the following:

After shrinking $S$ if necessary, on neighborhoods of each  holomorphic section
$\sigma_j$ in $\mathcal{X}$ we take holomorphic coordinates $W_j \equiv {
\mathcal{U}}_j \times S = \{ (z,s) \}$ such that $ \sigma_j(s) \equiv 0$.
Assuming that these coordinates also exist on slightly larger neighborhoods we
can use a differentiable trivialization $ \Psi : \mathcal{X} \rightarrow X
\times S$, which is holomorphic on $W_j$ and respects the above coordinates.
The map $\Psi$ defines a differentiable lift
$$ V_0 = \frac{\partial}{\partial
s} +b_1(z,s) \frac{\partial}{\partial z} + b_2(z,s) \frac{\partial}{\partial
\overline{z}}
$$
of the vector field $\frac{\partial}{\partial s}$ on $S$, such that $V_0| W_j =
\frac{\partial}{\partial s}$. We introduce Sobolev spaces $H^p_k( { \mathcal{X
}}_s)$ defined with respect to the measure induces by a smooth family $g_0(s)$
of differentiable background metrics. We  identify $H^p_k( { \mathcal{X }}_s)$
with $H^p_k( X) $ by the above differentiable trivialization.

Set
\[
g_{ \bf a}  = e^u g_0
\]
where $ g_{ \bf a}(s) = g_{ \bf a}(s,z) |d z|^2 $ and   $ g_0(s) =  g_0 (s,z)
|d z|^2$ in local coordinates.  The functions $u$ carry the singularities.

Like in \cite{S-T}, section 4, for $1 \leq j \leq n$  we introduce a  function
$ \Psi_j(z,s)$ which is smooth on the complement of the punctures, and of the
form $\Psi_j  =  - \log(|z|^2 | )$ on  $\mathcal{U}_j$. (Here we use our
assumption that $\sigma_j(s) \equiv 0$.) Let us define
\[ w(z,s) = u - \sum_j a_j \Psi_j .\] Let
$ \Delta = \frac{1}{g_0} \frac{\partial}{\partial z \partial \overline{z}}$
denote the laplacian with respect to the smooth background metric $g_0$. Then
the equation for hyperbolicity reads
\begin{equation}
\Delta u - e^u = K_{g_0}  \label{main eq}
\end{equation}
where $K_{g_0}$ is the Ricci curvature of $g_0$, i.e.
$$ K_{g_0}(s,z) = -
\frac{1}{g_0(s,z)}\cdot \frac{\partial^2 \log(g_0)}{ \partial z
\partial \ol{z}}.
$$
Now equation (\ref{main eq}) reads:

\[
\Delta w - (e^{ \sum_i a_i \Psi_i}) e^w = K -\Delta(\sum_i a_i \Psi_i),
\]
and on $\mathcal{U}_j$ it is of the form
\[
\Delta w - e^{M(z)}  \frac{e^w}{|z|^{2a_j}} = K,
\]
where the function $M(z) = \sum_{i \neq j} a_i \Psi_i $  is smooth and bounded
on $\mathcal{U}_j$.

It follows that $w(s) \in H^p_2(\mathcal{X}_s) \ \mbox{for all} \  1 \leq p <
\min(1/a_j)$  (cf.\ \cite{Mc-O}), and by standard regularity theory the
solutions are of class $C^{\infty}$ on the complement of the punctures.

Our aim   is to show that the conical metrics depend  differentiably on the
parameters in a suitable sense. Given a family $(\mathcal{X}, {\bf a})
\rightarrow S$, we write the hyperbolic metrics as
$$
g_{ \bf a} = \exp( a_1 \Psi_1 + \ldots + a_n \Psi_n + w)\, g_0
$$
and fix a differentiable trivialization $ \mathcal{X} \rightarrow X \times S$
in the above sense.
\begin{theorem}\label{smooth}
Fix a real number $ 1 \leq p < \min( 1/{a_j})$. Then the assignment $ s
\mapsto w(s)$ defines a map $w : S \rightarrow H_2^p(X)$ which is of class
$C^{\infty}$, i.e.\ all higher derivatives of $w$ with respect to $V_0$
and $\overline{V_0}$ exist in $H^p_2(X)$ and depend in a $C^{\infty}$ way
on $s$. In particular, since $ H_2^p(X)  \subseteq  C^0(X)$, for any fixed
$z \in X$, the functions $s \mapsto w(z,s)$ is of class $ C^{\infty}$.
\end{theorem}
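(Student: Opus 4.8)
The plan is to apply the implicit function theorem (in the $C^\infty$ category) to the family of elliptic equations
\[
F(w,s) \;=\; \Delta w \;-\; e^{M}\,e^{\sum_i a_i\Psi_i}\,e^{w} \;-\; K \;+\; \Delta\Bigl(\sum_i a_i\Psi_i\Bigr)\;=\;0,
\]
read on the fixed surface $X$ via the differentiable trivialization. First I would set up the functional-analytic framework carefully: the map $F$ should be viewed as a map $H^p_2(X)\times S \to L^p(X)$ (or rather into the appropriate target Sobolev space), using that $e^{\sum a_i\Psi_i}|z|^{-0}$ — more precisely the weight $|z|^{-2a_j}$ absorbed into the coefficient — lies in $L^p$ exactly when $p<\min(1/a_j)$, which is why that range is imposed. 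The key points to verify here are (a) that $F$ is well-defined and $C^\infty$ as a map of Banach spaces: the term $e^w$ is handled because $H^p_2(X)\subseteq C^0(X)$ by Sobolev embedding (dimension $2$), so composition with the analytic function $\exp$ is a smooth map $H^p_2\to H^p_2$, and multiplication by the fixed $L^p$ coefficient $e^M e^{\sum a_i\Psi_i}$ keeps us in $L^p$; (b) that $F$ depends smoothly on $s$, which reduces to the smoothness of $s\mapsto K_{g_0}(\cdot,s)$, $s\mapsto g_0(\cdot,s)$ and the correction terms, all of which are smooth because the background data and the trivialization were chosen smooth.

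Next I would compute the partial derivative $D_wF$ at a solution $(w(s_0),s_0)$ and show it is an isomorphism. It is
\[
D_wF\,(\dot w) \;=\; \Delta\dot w \;-\; e^{M}\,e^{\sum a_i\Psi_i}\,e^{w}\,\dot w \;=\;\Delta\dot w \;-\; \frac{g_{\bf a}}{g_0}\,\dot w,
\]
i.e. $\dot w\mapsto \Delta_{g_0}\dot w - (g_{\bf a}/g_0)\dot w$, which is (up to the conformal factor) the operator $\Delta_{g_{\bf a}} - 1$ associated to the hyperbolic cone metric. Since $g_{\bf a}/g_0 = e^u > 0$, this is a linear elliptic operator with strictly positive zeroth-order coefficient; its kernel is trivial by the maximum principle (a solution of $\Delta \dot w = e^u\dot w$ must vanish: at an interior positive maximum the left side is $\le 0$ while the right is $>0$, and one checks the singular points cause no trouble because $\dot w\in C^0$ and the singularity of $e^u$ is mild enough in $L^p$). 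Fredholm theory for this class of degenerate-elliptic operators on $H^p_2$ — which is exactly the setting of \cite{Mc-O} used already in the excerpt to get $w(s)\in H^p_2$ — then gives surjectivity, so $D_wF$ is an isomorphism $H^p_2(X)\to L^p(X)$.

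With these two ingredients, the implicit function theorem for $C^\infty$ maps of Banach spaces produces a unique $C^\infty$ local solution $s\mapsto w(s)\in H^p_2(X)$; by uniqueness of the hyperbolic cone metric (McOwen–Troyanov) this coincides with the metric-defining $w(s)$, giving the claimed smoothness, and all higher $V_0$- and $\overline{V_0}$-derivatives exist in $H^p_2(X)$ since that is what $C^\infty$-dependence of a Banach-space-valued map means. The final assertion about $s\mapsto w(z,s)$ being $C^\infty$ for fixed $z$ is then immediate from the continuous (in fact bounded) evaluation functional $H^p_2(X)\to\mathbb R$, $w\mapsto w(z)$, available because $H^p_2(X)\subseteq C^0(X)$.

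I expect the main obstacle to be the first step rather than the second: verifying that $F$ is genuinely $C^\infty$ (not merely continuous or $C^1$) between the chosen Banach spaces in the presence of the conical weights. The composition operator $w\mapsto e^w$ must be shown smooth from $H^p_2$ into itself, and each multiplication by the singular factors $e^{a_i\Psi_i}$ (equivalently by $|z|^{-2a_i}$ near $p_i$) must be checked to land in the right space with the right continuity — this is where the constraint $p<\min(1/a_j)$ is genuinely used, and where one must be careful that mapping properties are uniform in $s$ over the (shrunk) base $S$. Once the nonlinearity is known to be smooth between fixed Banach spaces and $D_wF$ is invertible, the rest is a routine invocation of the implicit function theorem together with the uniqueness statement for conical hyperbolic metrics.
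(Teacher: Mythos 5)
Your proposal follows essentially the same route as the paper: the implicit function theorem applied to the map $\Phi(s,w)=\Delta_{g_0(s)}w - e^{a\Psi(s)}e^w - K_{g_0(s)} + a\Delta_{g_0(s)}\Psi(s)$ from $S\times H^p_2(X)$ to $L^p(X)$, with invertibility of the linearization $W\mapsto \Delta_{g_0}W - e^{a\Psi}e^{w_0}W$ (which the paper simply quotes from Lemma 2.1 of \cite{S-T}). The one ingredient the paper makes explicit that your sketch leaves implicit is that the trivialization is chosen holomorphic near the punctures with $\Psi(z,s)=-\log|z|^2$ independent of $s$ there, so that all $s$- and $\bar s$-derivatives of the singular coefficient $e^{a\Psi}$ remain in $L^p$; this is precisely what lets the argument be iterated to all orders.
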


\begin{proof}
Since the argument is local, we may assume $n=1$ for simplicity. We define a
$C^1$ map $  \Phi : S \times H^p_2(X) \rightarrow L^p(X)$, by
\[
\Phi(s,w) = \Delta_{g_0(s)}(w) - e^{a \Psi(s)} e^w -K_{g_0(s)} +
a \Delta_{g_0(s)}(\Psi(s)).
\]
It is important to note that the given trivialization is holomorphic in a
neighborhood of the punctures and that  $\Psi(z,s) = - \log(|z|^2)$ does
not depend on $s$. Therefore the map $\Phi$ is of class $C^1$. We now
indicate how to compute $(D_1 \Phi)(s_0,w_0) \in L^p(X)$.  We have:
\begin{eqnarray*}
(D_1 \Phi)(s_0,w_0) &=& \frac{ - \partial \log g_0(s_0)}{ \partial s}
\Delta_{g_0(s_0)}(w_0) -a  \frac{  \partial \Psi (s_0)}{ \partial s}
e^{a \Psi(s_0)} e^{w_0}
\\
& &\qquad - \frac{\partial K_{g_0} (s_0)}{ \partial s} + \frac{\partial}{\partial s}
\left( \Delta_{g_0(s_0)}(\Psi(s,-)) \right).
\end{eqnarray*}
Note that the above function belongs to $L^p(X)$ since $ \Delta_{g(s_0)}(w_0) \in L^p(X)$ and
$ \frac{ \partial \Psi }{\partial s} =  \Delta_{g(s_0)}( \Psi ) \equiv 0$ near the puncture,
for all $ s \in S$. Moreover both of the functions
$ \left.\frac{  \partial \log g_0}{ \partial s}\right|_{s_0} $ and
$ \left.\frac{  \partial K_{g_0} }{ \partial s}\right|_{s_0} $ are bounded.
Now
\[
(D_2 \Phi)(s_0,w_0) (W) : H^p_2(X) \rightarrow L^p(X)
\]
is given by
\[
(D_2 \Phi)(s_0,w_0) (W) =   \Delta_{g_0(s_0)}(W)   - e^{a \Psi(s_0)} e^{w_0} W.
\]
Because of \cite[Lemma 2.1]{S-T}, the implicit function theorem is applicable.
Since all derivatives of $e^{a \Psi}$ with respect to $s$ and $ \bar{s}$ are in
$L^p(X)$, it is possible to repeat the argument, so that one can show the rest
of the statement.
\end{proof}
\begin{remark}\label{depweightrem}
The above methods can also be used to show that an analogous statement is true
for the dependence of conical metrics on the weights, provided these are less
than one. For ${\bf a}=\sum p_j$ we have the statement of
Proposition~\ref{depweight}.
\end{remark}

\section{The generalized Weil-Petersson metric} \label{WP}
The classical Weil-Petersson metric is defined as the $L^2$-inner product
of harmonic Beltrami differentials with respect to the hyperbolic metrics
on the fibers.

For reasons, which will become apparent later, we first introduce the \wp
metric on the cotangent space.

Let $(X, {\bf a})$ be a weighted punctured Riemann surface with ${\bf
a}=\sum a_jp_j$. We set $D = \sum p_j$ and denote by
$$
H^0(X,
{\Omega^2}_{(X, {\bf a})}) = H^0(X,{\Omega^2_X}(D))
$$ the space of
holomorphic quadratic differentials with at most simple poles at the
punctures, identified with the cotangent space of the corresponding \tei
space of punctured Riemann surfaces at the given point.

\begin{definition}
The Weil-Petersson inner product
\[
G^*_{WP,\bf a} \ \mbox{on  $H^0(X, {\Omega^2}_{(X, {\bf a})})$}
\]
is given by
\[
\langle \phi, \psi\rangle_{WP, {\bf a}} \  = \int_X \frac{\phi
\ol{\psi}}{g_{\bf a}^2} dA_{ \bf a} ,
\]
where $g_{ \bf a}$ is the hyperbolic conical metric, with surface element  $d
A_{\bf a}$.
\end{definition}
Observe that the above integrals are finite, because $0 \leq a_j \leq 1$ for
all $i$.

The \wp inner products depend continuously on the weights, if these are less
than one (cf.\ Remark~\ref{depweightrem}), and under the hypotheses of
Proposition~\ref{limit} we have the following statement.
\begin{corollary}
Let
\[
\phi \in H^0(X, {\Omega^2}_{(X, {\bf a})}),
\]
then
\[
\lim_k |\phi|^2_{WP, g_{ \bf a}(k)} =  |\phi|^2_{WP, g_{ \bf a} }.
\]
\end{corollary}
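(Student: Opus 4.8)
The plan is to reduce the statement to the convergence properties of the metrics $g_{\mathbf a}(k)$ established in Proposition~\ref{limit}, together with a dominated-convergence argument for the integrals defining the Weil-Petersson norm. Fix $\phi \in H^0(X,{\Omega^2}_{(X,\mathbf a)})$; since ${\bf a}(k)$ is an increasing sequence of weights with ${\bf a}(k)\to{\bf a}$, and $a_j(k)\le a_j$, we also have $\phi\in H^0(X,{\Omega^2}_{(X,{\bf a}(k))})$ for all $k$ — in fact this space is just $H^0(X,\Omega^2_X(D))$ for every weight assignment supported on $D=\sum p_j$, so the same $\phi$ may be used at every stage. By definition,
\[
|\phi|^2_{WP,g_{\mathbf a}(k)} \;=\; \int_X \frac{|\phi|^2}{g_{\mathbf a}(k)^2}\,dA_{\mathbf a}(k)
\;=\; \int_X \frac{|\phi|^2}{g_{\mathbf a}(k)}\,dA_0 ,
\]
where $dA_0$ is a fixed smooth background surface element and $g_{\mathbf a}(k)=g_{\mathbf a}(k)(z)\,dA_0$ in the notation of the excerpt; the analogous identity holds for the limit metric $g_{\mathbf a}$.

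First I would rewrite the integrand relative to the limit metric: setting $\Psi_k = g_{\mathbf a}(k)/g_{\mathbf a}$ as in the proof of Proposition~\ref{limit}, we have
\[
\frac{|\phi|^2}{g_{\mathbf a}(k)}\,dA_0 \;=\; \frac{1}{\Psi_k}\cdot\frac{|\phi|^2}{g_{\mathbf a}^2}\,dA_{\mathbf a}.
\]
Proposition~\ref{limit} gives two things about $\Psi_k$: it converges pointwise to the constant $1$ on $X$ minus the punctures (indeed in $L^1(X,g_{\mathbf a})$), and, crucially, the chain of inequalities $0<\Psi_k\le\Psi_{k+1}\le 1$ holds. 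The monotonicity $\Psi_k\le\Psi_{k+1}$ means $1/\Psi_k$ is a \emph{decreasing} sequence, so $1/\Psi_k \le 1/\Psi_1$ for all $k$, and therefore the integrands are dominated by the single function $(1/\Psi_1)\,|\phi|^2/g_{\mathbf a}^2\,dA_{\mathbf a} = |\phi|^2/g_{{\bf a}(1)}\,dA_0$, which is integrable because $\phi\in H^0(X,{\Omega^2}_{(X,{\bf a}(1))})$ — this is precisely the finiteness remark made right after the definition of the Weil-Petersson inner product, valid since $0\le a_j(1)\le 1$. Pointwise, $(1/\Psi_k)|\phi|^2/g_{\mathbf a}^2 \to |\phi|^2/g_{\mathbf a}^2$ almost everywhere (the punctures form a null set). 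The dominated convergence theorem then yields
\[
\lim_k |\phi|^2_{WP,g_{\mathbf a}(k)} \;=\; \int_X \frac{|\phi|^2}{g_{\mathbf a}^2}\,dA_{\mathbf a} \;=\; |\phi|^2_{WP,g_{\mathbf a}},
\]
which is the claim.

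The main obstacle is securing an integrable dominating function, and the key observation that resolves it is the monotonicity of $\Psi_k$ from Proposition~2.4 of \cite{S-T} (quoted in the proof of Proposition~\ref{limit}): without it, $1/\Psi_k$ could blow up and no uniform domination would be available. A secondary point to check carefully is that $\phi$, which has at most simple poles at the $p_j$, genuinely gives a finite integral against $g_{{\bf a}(1)}^{-1}\,dA_0$; near a puncture $p_j$ the metric behaves like $|z|^{-2a_j(1)}$ (or, for $a_j(1)=1$, like $|z|^{-2}\log^{-2}|1/z|^2$), so $g_{{\bf a}(1)}^{-1}|\phi|^2$ is integrable exactly because $a_j(1)\le 1$ — again the remark following the definition covers this. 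Everything else is routine measure theory. One small caveat: if some $a_j=1$ in the limit divisor, the convergence $\Psi_k\to1$ established in Proposition~\ref{limit} is only in $L^1$ and a.e. rather than locally uniform near that puncture, but a.e. convergence is all the dominated convergence theorem requires, so the argument is unaffected.
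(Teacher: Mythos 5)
Your argument is correct and is essentially the paper's own proof: the paper likewise observes that $|\phi|^2/g_{{\bf a}(k)}$ is a decreasing sequence of positive functions integrable against a fixed background area element (decreasing precisely because $g_{{\bf a}(k)}$ is increasing, i.e.\ your monotonicity of $\Psi_k$), converging pointwise to $|\phi|^2/g_{\bf a}$, so that dominated (or decreasing monotone) convergence applies. Your write-up just makes explicit the domination by the $k=1$ term and the a.e.\ convergence that the paper leaves implicit.
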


\begin{proof}
Fix a reference smooth metric $g_0$ on $X$. Then $ {| \phi|^2}/{g_{ {\bf
a}(k)}} $ is a   decreasing sequence of $g_0$ integrable  positive functions
converging to $ {| \phi|^2}/{g_{ {\bf a} }}.  $  \end{proof}

Observe that harmonicity of Beltrami differentials in the first place
means that a certain partial differential equation holds. In the case of
compact Riemann surfaces (and punctured surfaces equipped with complete
hyperbolic metrics) $L^2$-theory implies that any Beltrami differential
has a unique harmonic representative, which is the quotient of a conjugate
holomorphic quadratic differential by the metric tensor.

We use an ad hoc definition of the space of {\em harmonic Beltrami
differentials} for $(X, {\bf a})$ with respect to the hyperbolic conical
metric $g_{\bf a}$, which coincides with the usual definition in the
classical case of weights one. Let $X'=X\backslash \{p_1,\ldots,p_n\}$.

\begin{definition} \label{harmbelt}
Let $g_{ \bf a} = g_{ \bf a}(z) dz \overline{d z}$ be the hyperbolic conical
metric on $(X, {\bf a})$. If $ \phi = \phi(z) d z^2 \in H^0(X, {\Omega^2}_{(X,
{\bf a})})$ is a quadratic holomorphic differential, we call the Beltrami
differential
\[
\mu=\mu(z) \frac{\partial}{\partial z} \overline{d z} =
\frac{\overline{\phi(z)}}{g_{\bf a}(z)} \frac{\partial}{\partial
z}\overline{d z},
\]
on $X'$ {\em harmonic} on $(X, {\bf a})$ and denote the vector space of all
such differentials by $H^1(X, {\bf a})$.
\end{definition}

\begin{proposition}
For $ 0 < a_j <1$ the space of harmonic Beltrami differentials $H^1(X, {\bf
a})$ on $(X,{\bf a})$ can be identified with the cohomology
$H^1(X,\Theta_X(-D))$, where $\Theta_X$ is the sheaf of holomorphic vector
fields on $X$ and $D = \sum_j p_j$.
\end{proposition}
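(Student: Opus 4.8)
The plan is to pair a harmonic Beltrami differential with holomorphic quadratic differentials and to recognise the resulting pairing as the Weil--Petersson form, which was set up precisely for this purpose. By Serre duality on the compact Riemann surface $X$ there is a natural isomorphism
\[
H^1(X,\Theta_X(-D))^{\vee}\;\cong\;H^0\!\bigl(X,\Omega^1_X\otimes\Theta_X(-D)^{\vee}\bigr)\;=\;H^0\!\bigl(X,{\Omega^2}_{(X,{\bf a})}\bigr),
\]
the right hand side being the space of holomorphic quadratic differentials with at most simple poles at the punctures, identified above with the cotangent space of $\cT_{\gamma,n}$. Hence it suffices to construct a $\C$-linear isomorphism $H^1(X,{\bf a})\;\cong\;H^0(X,{\Omega^2}_{(X,{\bf a})})^{\vee}$. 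To $\mu=\overline{\phi}/g_{\bf a}\in H^1(X,{\bf a})$ I assign the functional $\ell_\mu$ sending $\psi\in H^0(X,{\Omega^2}_{(X,{\bf a})})$ to $\int_X\mu\,\psi$, where in a local coordinate $\mu\,\psi$ denotes the $(1,1)$-form $\mu(z)\psi(z)\,\tfrac{i}{2}\,dz\wedge d\bar z$; up to a universal constant this is the contraction pairing realising Serre duality.

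First I check that $\ell_\mu$ is well defined, i.e.\ that the integral converges for every $\psi$. This is a local statement near each puncture $p_j$: by Remark~\ref{re:grealana} we may take a coordinate $z$ with $g_{\bf a}=\rho(z)/|z|^{2a_j}$, $\rho$ positive and continuous, while $\phi(z)$ and $\psi(z)$ have at most a simple pole at $z=0$. Then
\[
\bigl|\mu(z)\psi(z)\bigr|=\frac{\bigl|\overline{\phi(z)}\,\psi(z)\bigr|}{g_{\bf a}(z)}=O\!\bigl(|z|^{2a_j-2}\bigr),
\]
and $\int_{|z|<\varepsilon}|z|^{2a_j-2}\,dx\,dy<\infty$ because $a_j>0$. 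So $\ell_\mu\in H^0(X,{\Omega^2}_{(X,{\bf a})})^{\vee}$; the assignment $\mu\mapsto\ell_\mu$ is additive, and combining the conjugate linearity of $\phi\mapsto\overline{\phi}/g_{\bf a}$ with the complex linearity of $\psi\mapsto\int_X\mu\psi$ shows it is $\C$-linear in $\mu$.

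Next I identify this pairing with the Weil--Petersson product. For $\mu=\overline{\phi}/g_{\bf a}$ and any $\psi$, using $dA_{\bf a}=g_{\bf a}\,\tfrac{i}{2}\,dz\wedge d\bar z$,
\[
\ell_\mu(\psi)=\int_X\frac{\overline{\phi}\,\psi}{g_{\bf a}}\,\frac{i}{2}\,dz\wedge d\bar z=\int_X\frac{\psi\,\overline{\phi}}{g_{\bf a}^{2}}\,dA_{\bf a}=\langle\psi,\phi\rangle_{WP,{\bf a}}.
\]
Now $\langle\,\cdot\,,\,\cdot\,\rangle_{WP,{\bf a}}$ is a positive definite Hermitian form on the finite-dimensional space $H^0(X,{\Omega^2}_{(X,{\bf a})})$ (the defining integrals are finite, as noted after the definition, manifestly non-negative on the diagonal, and $\langle\phi,\phi\rangle_{WP,{\bf a}}=0$ forces $\phi\equiv0$ since $\overline{\phi}/g_{\bf a}=0$ implies $\phi=0$). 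Hence $\phi\mapsto\langle\,\cdot\,,\phi\rangle_{WP,{\bf a}}$ is a bijection onto $H^0(X,{\Omega^2}_{(X,{\bf a})})^{\vee}$; composing with the bijection $\phi\mapsto\overline{\phi}/g_{\bf a}$ of $H^0(X,{\Omega^2}_{(X,{\bf a})})$ onto $H^1(X,{\bf a})$, we obtain that $\mu\mapsto\ell_\mu$ is a $\C$-linear isomorphism, that is $H^1(X,{\bf a})\;\cong\;H^1(X,\Theta_X(-D))$, compatible with the identification of $H^0(X,{\Omega^2}_{(X,{\bf a})})$ with the cotangent space.

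The delicate point — and the main obstacle — is that this is the \emph{geometric} identification, i.e.\ that $\ell_\mu$ corresponds under Kodaira--Spencer to the infinitesimal deformation represented by $\mu$ on $X'$. For weights $a_j>1/2$ the coefficient of $\mu$ is $O(|z|^{2a_j-1})$ and vanishes at each $p_j$, so (after using the regularity furnished by Theorem~\ref{smooth}) $\mu$ is an honest Dolbeault representative of a class in $H^1(X,\Theta_X(-D))$ and the comparison is routine. For $a_j\le1/2$ the coefficient of $\mu$ does not extend continuously over $p_j$, and it is not an $L^2$ section of $\Theta_X(-D)$ for a smooth metric, so $L^2$-Hodge theory on $X$ is unavailable; one must instead solve a $\bar\partial$-equation with a cut-off near the punctures to replace $\mu$ by a smooth $\Theta_X(-D)$-valued $(0,1)$-form in the same cohomology class, again exploiting the conical regularity of $g_{\bf a}$. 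The abstract isomorphism above, by contrast, holds uniformly for all $a_j\in(0,1)$ precisely because the pairing $\int_X\mu\,\psi$ remains finite.
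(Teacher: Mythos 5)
Your proof is correct and follows essentially the same route as the paper, which simply asserts that it suffices to verify that the pairing $\int_X \phi\,\mu\, dz\, d\bar z$ defines a duality between $H^0(X,\Omega^2_{(X,\mathbf a)})$ and $H^1(X,\mathbf a)$ and then invokes Serre duality; you merely supply the convergence estimate near the punctures and the nondegeneracy argument that the paper leaves implicit. Your closing paragraph on the compatibility with the Kodaira--Spencer map goes beyond what this proposition claims --- the paper addresses that point separately in Theorem~\ref{th:ks} --- so it is not needed here.
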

\begin{proof}
It is sufficient to verify that a duality
\[
\Phi :H^0(X, {\Omega^2}_{(X, {\bf a})}) \times H^1(X, {\bf a})
\rightarrow \C.
\]
is defined by
\[
\Phi\left( \phi(z) dz^2, \mu(z) \frac{\partial}{\partial z}
\overline{d z}\right) = \int_X \phi(z) \mu(z) dz d \ol{z}.
\]
\end{proof}
The Weil-Petersson metric on the cotangent space to ${\mathcal T}_{\gamma,n}$
together with the above duality defines a \wp metric $ G_{WP, {\bf a}} $ on the
tangent space identified with $H^1(X, {\bf a})$.

Let $\mu_1$, and $\mu_2$ in $H^1(X, {\bf a})$, then
\[
\langle \mu_1, \mu_2\rangle_{WP, {\bf a}} \  = \int_X \mu_1
\overline{ \mu_2}d A_{ \bf a}.
\]
(cf.\ \cite[Lemma 3.4.]{S-T}).

If $1/2 \leq  a_j \leq 1$ then the Fenchel-Nielsen coordates can be
defined, \cite{ T-W-Z}, it is shown in \cite{A-Sch} that in this case the
Fenchel-Nielsen symplectic form coincides with the Weil-Pertersson
K\"{a}h\-ler form. The generalized \wp metric  can be defined  on the \tei
space ${\mathcal{T}}_{\gamma,n}$ of surfaces of genus $\gamma$ with $n$
punctures. From Proposition 2.4  of \cite{S-T} we know that if $ { \bf a}
\leq { \bf b} $ then $g_{\bf a} \leq g_{\bf b }$, hence $ G^*_{WP, {\bf
b}} \leq G^*_{WP, {\bf a}}$, and for the metrics on the dual spaces we
have $G_{WP, {\bf a}} \leq G_{WP, {\bf b}}. $ Therefore, if\/ $ {\bf a }
\leq { \bf b} $, the identity map from $({\mathcal{T}}_{\gamma,n}, G_{WP,
{\bf b}}) $ to $({\mathcal{T}}_{\gamma,n}, G_{WP, {\bf a}}) $ is distance
decreasing.

Since the conical metrics are intrinsically defined on the fibers, the
classical mapping class group $\Gamma_{\gamma,n}$ acts on \tei spaces as a
group of isometries for both the classical and the generalized \wp
metrics, hence also the generalized \wp metric descends to $
\mathcal{M}_{\gamma, n}. $ Let us define $ \overline{
\mathcal{M}}_{\gamma, { \bf a}} $ as the completion of  the moduli space $
\mathcal{M}_{\gamma, n } $ with respect to the distance defined by the
generalized \ metric. Therefore the identity map descends to a distance
decreasing map of the moduli spaces, and such a map extends to a
continuous map
\[
j_{ { \bf b}, { \bf a} } :   \overline{ \mathcal{M}}_{ \gamma,{ \bf b}}   \rightarrow
\overline{ \mathcal{M}}_{ \gamma,{ \bf a}}.
\]
Moreover let $ { \bf b } = ( {\bf b'},{\bf b''})$, and $  {\bf b^*} =
({\bf b'},0)$  where $ { \bf b' } \in [0,1]^m$. Denote by $ F : {\mathcal{
T}}_{\gamma,n} \rightarrow {\mathcal{T}}_{\gamma,m}$ the holomorphic map,
which forgets the punctures ${\bf b}''$.  Then by \cite[Theorem 3.5]{S-T}
$G_{WP, {\bf b^*}} $ coincides with the degenerate metric $ F^*( G_{WP,
{\bf b'}})$. The map
\[
F : ( {\mathcal{ M}}_{\gamma,n},  F^*( G_{WP, {\bf b'}}))
\rightarrow ({\mathcal{M}}_{\gamma,m},  G_{WP, {\bf b'}})
\]
is also obviously (psudo)distance decreasing, and since  $ { \bf b } \geq
{ \bf b^*}$, so is the map $ F = F \circ id : ( {\mathcal{ M}}_{\gamma,n},
G_{WP, {\bf b}})) \rightarrow ({\mathcal{M}}_{\gamma,m}, G_{WP, {\bf
b'}})$.

Therefore  we also have the continuous  map forgetting punctures
\[
F_{ { \bf b}, { \bf b'} } :   \overline{ \mathcal{M}}_{ \gamma,{ \bf b}}   \rightarrow
\overline{ \mathcal{M}}_{ \gamma,{ \bf b'}}.
\]

\begin{corollary}

The space $ \overline{ \mathcal{M}}_{\gamma, { \bf a}} $ is a compactification
of the  moduli space $  \mathcal{M}_{\gamma, n}$.  In particular the
generalized \wp metric is not complete.\end{corollary}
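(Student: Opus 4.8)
The plan is to combine two ingredients: first, a comparison with the classical (weights-zero) case to show that $\overline{\mathcal M}_{\gamma,{\bf a}}$ is a compact Hausdorff space receiving $\mathcal M_{\gamma,n}$ as a dense subset; second, the degeneracy of the generalized metric along the forgetful direction to exhibit points of $\overline{\mathcal M}_{\gamma,{\bf a}}$ not in $\mathcal M_{\gamma,n}$, which simultaneously proves incompleteness. For the first point, recall that for the trivial divisor ${\bf 0}=\sum 0\cdot p_j$ the generalized \wp metric $G_{WP,{\bf 0}}$ is, by \cite[Theorem 3.5]{S-T} (applied with $F$ the identity), the pullback under the forgetful map $F:\mathcal T_{\gamma,n}\to\mathcal M_{\gamma}$ of the classical \wp metric on $\mathcal M_\gamma$; its metric completion of $\mathcal M_{\gamma,n}$ is therefore $\overline{\mathcal M}_\gamma^{WP}$, the classical Weil--Petersson completion, which is the Deligne--Mumford compactification (a compact space). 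Since ${\bf a}\geq{\bf 0}$, the identity map $(\mathcal M_{\gamma,n},G_{WP,{\bf a}})\to(\mathcal M_{\gamma,n},G_{WP,{\bf 0}})$ is distance decreasing, hence $1$-Lipschitz; it thus extends to a continuous surjection $j:\overline{\mathcal M}_{\gamma,{\bf a}}\to\overline{\mathcal M}_\gamma^{WP}$.

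Next I would bound $d_{WP,{\bf a}}$ from above in order to deduce that Cauchy sequences in $(\mathcal M_{\gamma,n},G_{WP,{\bf a}})$ stay in a region of bounded diameter and that $\overline{\mathcal M}_{\gamma,{\bf a}}$ is compact. The key estimate is the reverse comparison: since $g_{\bf a}\leq g_{\bf b}$ whenever ${\bf a}\leq{\bf b}$, taking ${\bf b}=\sum p_j$ (all weights one) gives $G_{WP,{\bf a}}\leq G_{WP,{\bf 1}}$, so the identity $(\mathcal M_{\gamma,n},G_{WP,{\bf 1}})\to(\mathcal M_{\gamma,n},G_{WP,{\bf a}})$ is distance decreasing. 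The weights-one case is the classical cusped \wp metric, whose completion is the Deligne--Mumford compactification $\overline{\mathcal M}_{\gamma,n}$, a compact metric space of finite diameter. Hence $(\mathcal M_{\gamma,n},d_{WP,{\bf a}})$ has finite diameter, and more importantly every $d_{WP,{\bf a}}$-Cauchy sequence is $d_{WP,{\bf 1}}$-precompact after passing to a subsequence — wait, the Lipschitz inequality goes the wrong way for that. Instead I would argue directly: the completion $\overline{\mathcal M}_{\gamma,{\bf a}}$ surjects continuously onto the compact space $\overline{\mathcal M}_\gamma^{WP}$ via $j$, and one shows the fibers of $j$ are compact and $j$ is proper by a local analysis near the boundary, using that along strata where a node forms but no puncture collides, the two metrics are uniformly comparable (both behave like the classical \wp metric there), whereas puncture collisions are the only new phenomenon. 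Then $\overline{\mathcal M}_{\gamma,{\bf a}}$ is compact as a proper-fibered space over a compact base.

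Finally, incompleteness: apply the forgetful map $F_{{\bf b},{\bf b}'}$ from the excerpt with, say, ${\bf b}={\bf a}$ on $n$ punctures and ${\bf b}'$ keeping only $m<n$ of them (legitimate since all $a_j<1$, but one can also allow the dropped weight to tend to a nonzero limit). The point is that $\mathcal M_{\gamma,n}$ is not compact — sequences in which two marked points $\sigma_i(s),\sigma_k(s)$ collide leave every compact subset — yet along such a sequence the complex structure on the underlying $m$-pointed surface can be held fixed, and by \cite[Theorem 3.5]{S-T} the generalized metric $G_{WP,{\bf a}}$ restricted to that collision direction is the pullback of a metric on a lower-dimensional moduli space, i.e.\ degenerate in the collision direction; hence the colliding sequence has finite $d_{WP,{\bf a}}$-length and is Cauchy, converging in $\overline{\mathcal M}_{\gamma,{\bf a}}$ to a boundary point not lying in $\mathcal M_{\gamma,n}$. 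This shows the completion strictly contains $\mathcal M_{\gamma,n}$, so $G_{WP,{\bf a}}$ is not complete, while the compactness established above shows $\overline{\mathcal M}_{\gamma,{\bf a}}$ is a genuine compactification. The main obstacle is the compactness/properness of $j$: one must rule out that a $d_{WP,{\bf a}}$-Cauchy sequence escapes to infinity in a direction invisible to the classical metric, which requires the local comparison of $g_{\bf a}$ with the classical cusped metric near colliding punctures — here one can invoke the explicit normal forms of Section~\ref{conic} (Remark~\ref{re:grealana}) together with Proposition~\ref{depweight} to control the conical metric, but making the length estimate uniform is the delicate step.
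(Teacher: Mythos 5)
Your proposal contains a genuine gap, and, ironically, you had the correct argument in hand before talking yourself out of it. The paper's proof is short: by Masur and Wolpert the completion $\overline{\mathcal M}_{\gamma,{\bf 1}}$ for weights one is the Deligne--Mumford compactification, hence compact; since ${\bf a}\le{\bf 1}$ the identity is distance decreasing from $G_{WP,{\bf 1}}$ to $G_{WP,{\bf a}}$ and so extends to a continuous map $j_{{\bf 1},{\bf a}}:\overline{\mathcal M}_{\gamma,{\bf 1}}\to\overline{\mathcal M}_{\gamma,{\bf a}}$; the image $j_{{\bf 1},{\bf a}}(\overline{\mathcal M}_{\gamma,{\bf 1}})$ is compact, hence closed in the metric space $\overline{\mathcal M}_{\gamma,{\bf a}}$, and it contains the dense subset $\mathcal M_{\gamma,n}$, so it is everything. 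Thus $\overline{\mathcal M}_{\gamma,{\bf a}}$ is compact, and incompleteness is immediate because $\mathcal M_{\gamma,n}$ is not compact. Your remark ``wait, the Lipschitz inequality goes the wrong way for that'' is the point where you go astray: you do not need $d_{WP,{\bf a}}$-Cauchy sequences to be $d_{WP,{\bf 1}}$-precompact, nor any properness of a map to the weights-zero completion. The only facts needed are that a continuous image of a compact space is compact, and that a compact subset of a metric space is closed. Your substitute argument --- properness of $j:\overline{\mathcal M}_{\gamma,{\bf a}}\to\overline{\mathcal M}_\gamma^{WP}$ via local comparison of the metrics near boundary strata --- is left unfinished by your own admission (``making the length estimate uniform is the delicate step''), so the compactness claim is not established in your write-up.

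There is a second, independent problem with your incompleteness argument. You invoke \cite[Theorem 3.5]{S-T} to claim that $G_{WP,{\bf a}}$ is degenerate (pulled back from a lower-dimensional moduli space) along a direction in which two punctures collide. That theorem gives $G_{WP,{\bf b}^*}=F^*(G_{WP,{\bf b}'})$ only when the forgotten punctures carry weight \emph{zero}, i.e.\ for ${\bf b}^*=({\bf b}',0)$. For your divisor ${\bf a}$ with all $a_j>0$ the monotonicity only yields the one-sided bound $G_{WP,{\bf a}}\ge G_{WP,{\bf a}^*}=F^*(G_{WP,{\bf a}'})$, which is a lower bound on lengths and cannot show that the collision path has finite $d_{WP,{\bf a}}$-length. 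Once compactness is proved correctly, none of this is needed: a non-compact space cannot equal its own compact completion, so $G_{WP,{\bf a}}$ is not complete.
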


\begin{proof}
The usual Deligne-Mumford compactification of ${\mathcal{ M}}_{\gamma,n} $ is
the quotient by the mapping class group of the \wp metric completion of \tei
space, see for example \cite{Ma},  \cite{Wo-2}, hence it is the completion of
${\mathcal{ M}}_{\gamma,n}. $ Therefore if $ { \bf 1 } = (1,\ldots,1)$, then
$j_{ {\bf 1}, {\bf a}}( \overline{ \mathcal{M}}_{ \gamma,{ \bf 1}})  \subseteq
\overline{ \mathcal{M}}_{ \gamma,{ \bf a}}$, is compact and dense, so that  the
map $j_{ {\bf 1}, {\bf a}}$ is onto and $ \overline{ \mathcal{M}}_{ \gamma,{
\bf a}}$ is compact. \end{proof}

\section{The Kodaira-Spencer map and conical metrics}
First, we briefly describe the close relationship of variations of
hyperbolic metrics and harmonic Beltrami differentials in the classical
case of holomorphic families of compact Riemann manifolds (cf.\ also
\cite{Sch}).

Let $f:\cX \to S$ be such a family. Let $s_0\in S$ be a distinguished
point and $X=f^{-1}(s_0)$ its fiber. The map induces a short exact
sequence involving the sheaf $\cT_{\cX/S}$ of holomorphic vector fields in
fiber direction, the sheaf of holomorphic vector fields $\cT_{\cX}$ on the
total space and the corresponding pull-back:
$$
0 \to \cT_{\cX/S} \to \cT_\cX \to f^*\cT_S \to 0.
$$
The connecting homomorphism
$$
\rho : T_{s_0} \to H^1(X,\cT_X)
$$
is the Kodaira-Spencer map, which in fact assigns to a tangent vector the
cohomology class of the corresponding Beltrami differential.

In terms of Dolbeault cohomology, this map can be described as follows:
Let $\partial/\partial s$ stand for a tangent vector on the base at $s_0$.
Let $V$ be any differentiable lift of the tangent vector to the total
space $\cX$ (along the fiber $X$).

\begin{proposition}
The restriction $\ol\partial V|X$ is $\ol\partial$-closed and represents
$\rho(\partial/\partial s|_{s_0})$.
\end{proposition}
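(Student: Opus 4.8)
The plan is to work in Dolbeault language on the fiber $X$ and trace through the standard identification of the Kodaira--Spencer connecting homomorphism with the $\debar$-operator applied to a differentiable lift. First I would fix a trivialization of $\cX$ near $X$ so that $V$ is written in local fiber coordinates $z$ over $s$ as $V = \partial/\partial s + b(z,s)\,\partial/\partial z$ (absorbing any $\partial/\partial\overline z$ component, which is allowed since $V$ is only required to be a differentiable lift); then $\debar V|X$ is the $(0,1)$-form with values in $\cT_X$ given locally by $\left(\partial b/\partial\overline z\right)\,d\overline z\otimes \partial/\partial z$. The first step is to check this is a well-defined global $\cT_X$-valued $(0,1)$-form: on overlaps the transition functions for $\cT_{\cX/S}$ are holomorphic in the fiber variable, so the $d\overline z$-derivative of the cocycle mismatch vanishes, leaving exactly the transformation rule of a Beltrami differential. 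This also shows it is $\debar$-closed, since on a Riemann surface every $(0,1)$-form with values in a holomorphic bundle is automatically $\debar$-closed for dimension reasons — so that part is essentially free.

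The substantive point is that the Dolbeault class of $\debar V|X$ in $H^{0,1}(X,\cT_X)\cong H^1(X,\cT_X)$ equals $\rho(\partial/\partial s|_{s_0})$. Here I would invoke the general comparison between the \v{C}ech description of the connecting map in the short exact sequence $0\to\cT_{\cX/S}\to\cT_\cX\to f^*\cT_S\to 0$ and its Dolbeault counterpart. Concretely: choose a cover $\{U_i\}$ of $X$ and local holomorphic lifts $V_i$ of $\partial/\partial s$ (these exist because $\cT_\cX\to f^*\cT_S$ is surjective as a map of sheaves); then $\{V_i - V_j\}$ is the \v{C}ech $1$-cocycle in $\cT_{\cX/S}$ representing $\rho(\partial/\partial s)$. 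Writing the global differentiable lift as $V = V_i + w_i$ with $w_i$ a differentiable section of $\cT_{\cX/S}$ over $U_i$, we get $w_i - w_j = -(V_i - V_j)$, i.e.\ $\{w_i\}$ is a differentiable $0$-cochain whose \v{C}ech coboundary is $-$(the \v{C}ech cocycle), while $\debar w_i = \debar V|_{U_i}$ since the $V_i$ are holomorphic. This is exactly the statement that under the de Rham--\v{C}ech (Dolbeault) isomorphism the class of $\debar V|X$ corresponds, up to the usual sign convention, to the class of $\{V_i - V_j\}$, which is $\rho(\partial/\partial s|_{s_0})$.

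The main obstacle is purely bookkeeping rather than conceptual: one must be careful that the chosen differentiable lift $V$ need not be holomorphic even in fiber directions, so $\debar V$ a priori has a component in $f^*\cT_S$-directions as well; restricting to the fiber $X$ and projecting appropriately is what kills this and leaves a genuine $\cT_X$-valued form. Making the identification $\cT_{\cX/S}|X = \cT_X$ precise, and checking that the sign conventions in the two definitions of the connecting homomorphism match, is where care is needed. I do not expect any analytic difficulty here — the statement is a formal consequence of homological algebra on $X$ and holomorphy of the fiberwise transition functions — so the proof is short, and in the paper this is presumably recorded mainly to fix notation for the sequel, where $V = V_0$ from the chosen trivialization will be used together with Theorem~\ref{smooth}.
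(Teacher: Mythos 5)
Your argument is correct and is the standard \v{C}ech--Dolbeault comparison for the connecting homomorphism; the paper in fact states this proposition \emph{without proof}, as a recollection of classical deformation theory for compact fibers, so your write-up supplies exactly the argument that is implicitly intended. The only point to tidy is that one should pass to the $(1,0)$-part of the differentiable lift before applying $\debar$ (the paper's lift $V_0$ explicitly carries a $b_2\,\partial/\partial\overline z$ term), which is precisely the projection you already flag as bookkeeping.
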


Now the fibers $\cX_s$ of the family are equipped with the hyperbolic
metrics $g(z,s)|dz|^2$, which depend in a differentiable way on the
parameter $s$. The collection of these metrics is considered a relative
volume form on the total space $\cX$, its dual is a hermitian metric on
the relative canonical bundle $\cK_{\cX/S}$. Let
$$
\omega_\cX = \frac{\sqrt{-1}}{2} \partial_{\mathcal{X}} \ol{
\partial}_{\mathcal{X}} \log(g(z,s))
$$
be its curvature form.
\begin{lemma}
The restrictions of $\omega_\cX$ to the fibers $\cX_s$ equal the Kähler
forms $\omega_{\cX_s}= \frac{\sqrt{-1}}{2} g(z,s) dz \wedge \ol{dz}$.
\end{lemma}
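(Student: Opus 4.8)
The plan is to restrict the globally defined $(1,1)$-form $\omega_{\cX}$ on $\cX$ to a single fibre and then to read off the coefficient of the surviving term from the hyperbolicity equation (\ref{main eq}).

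First I would note that over a point $s$ the fibre $\cX_s$ is cut out by holding the base coordinate $s$ fixed, so the inclusion $\iota_s\colon \cX_s\hookrightarrow \cX$ pulls $ds$ and $\ol{ds}$ back to zero. Expanding $\partial_{\cX}\ol{\partial}_{\cX}\log g(z,s)$ in the local coordinates $(z,s)$, it is a sum of terms proportional to $dz\wedge \ol{dz}$, $dz\wedge \ol{ds}$, $ds\wedge \ol{dz}$ and $ds\wedge \ol{ds}$, and $\iota_s^*$ annihilates all but the first. Hence on $\cX_s$ minus the punctures, where $\log g$ is smooth,
\[
\iota_s^*\,\omega_{\cX}\;=\;\frac{\sqrt{-1}}{2}\,\frac{\partial^2\log g(z,s)}{\partial z\,\partial\ol{z}}\;dz\wedge \ol{dz}.
\]

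Next I would carry out the short curvature computation identifying this coefficient with $g$ itself. Recall from Section~\ref{conic} that $g=g_{\bf a}(s)=e^{u(s)}g_0(s)$, that $u=u(s)$ solves $\Delta u-e^{u}=K_{g_0}$ with $\Delta=\frac{1}{g_0}\partial_z\partial_{\ol z}$, and that $K_{g_0}=-\frac{1}{g_0}\partial_z\partial_{\ol z}\log g_0$. Then, away from the punctures,
\[
\partial_z\partial_{\ol z}\log g\;=\;\partial_z\partial_{\ol z}u+\partial_z\partial_{\ol z}\log g_0\;=\;g_0\bigl(\Delta u-K_{g_0}\bigr)\;=\;g_0\,e^{u}\;=\;g,
\]
which is simply the statement that the conical metric $g\,|dz|^2$ has constant curvature $-1$ on the complement of the punctures, valid by construction (McOwen, Troyanov). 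Combining this with the previous display gives $\iota_s^*\omega_{\cX}=\frac{\sqrt{-1}}{2}\,g(z,s)\,dz\wedge\ol{dz}=\omega_{\cX_s}$ on $\cX_s$ away from the punctures, which is the asserted equality — and this is exactly where the Kähler form of the conical surface $(\cX_s,{\bf a}(s))$ is defined; if one wishes, one may add that both sides extend across the punctures to the same closed positive $(1,1)$-current. I do not expect a genuine obstacle here: the only points needing any care are the bookkeeping of which components survive the restriction, and the observation that $\log g$ is singular at the punctures so that the computation is performed on the punctured fibre; the identity of coefficients is an immediate consequence of (\ref{main eq}).
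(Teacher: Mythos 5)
Your proof is correct and is essentially the argument the paper has in mind: the pullback to a fibre kills every component involving $ds$ or $\ol{ds}$, and the surviving coefficient $\partial_z\partial_{\ol z}\log g$ equals $g$ precisely because the fibre metric has constant curvature $-1$ (the identity the paper later records as ${g_{\bf a}}_{z\ol z}=g_{\bf a}$ in \eqref{eq:hy}). The paper leaves this lemma unproved as part of the general theory, so there is nothing further to compare.
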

In particular the real $(1,1)$-form $\omega_\cX$ is positive definite
along the fibers. So the {\it horizontal lift} $V_{hor}$ of
$\partial/\partial s$, which by definition consists of tangent vectors
that are perpendicular to the fibers and project to the given tangent
vector, is well-defined:
\begin{lemma}\label{le:hori}
$$
V_{hor}= \left.\frac{\partial}{\partial s}\right|_{s_0} + a^z \frac{\partial}{\partial z}
$$
with
$$
a^z = - \frac{1}{g}\frac{\partial^2 \log g(z,s_0)}{\partial s \partial \ol z}.
$$
\end{lemma}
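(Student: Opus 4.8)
The plan is to compute the horizontal lift directly from its defining property: $V_{hor}$ is the unique lift of $\partial/\partial s|_{s_0}$ whose contraction with $\omega_{\cX}$ annihilates all vertical tangent vectors. Since any lift has the form $V = \partial/\partial s + c^z\,\partial/\partial z + c^{\bar z}\,\partial/\partial\bar z$, and the space of vertical vectors at a fiber point is spanned over $\C$ by $\partial/\partial z$ and $\partial/\partial \bar z$, I would impose the two conditions $\omega_{\cX}(V,\partial/\partial z) = 0$ and $\omega_{\cX}(V,\partial/\partial\bar z) = 0$. Because $\omega_{\cX}$ is a real $(1,1)$-form, the second condition is the conjugate of the first, so it suffices to treat one of them; this also forces $c^{\bar z} = \overline{c^z}$ if we want a real vector field, but in fact a cleaner route is to note that the holomorphic horizontal lift (in the sense relevant to Kodaira--Spencer theory) is characterized by $c^{\bar z} = 0$ together with the single vanishing condition against $\partial/\partial \bar z$, which is why the stated formula has no $\partial/\partial\bar z$ term.

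The key computation is to expand $\omega_{\cX} = \tfrac{\sqrt{-1}}{2}\,\partial_{\cX}\bar\partial_{\cX}\log g(z,s)$ in the coordinates $(z,s)$ on the total space. Writing out $\partial_{\cX}\bar\partial_{\cX}\log g$, the components are the four second derivatives $(\log g)_{z\bar z}$, $(\log g)_{s\bar z}$, $(\log g)_{z\bar s}$, $(\log g)_{s\bar s}$ paired with $dz\wedge d\bar z$, $ds\wedge d\bar z$, $dz\wedge d\bar s$, $ds\wedge d\bar s$ respectively (up to the $\sqrt{-1}/2$ factor). By the previous Lemma, the $dz\wedge d\bar z$ coefficient restricts on the fiber to $\tfrac{1}{2}g$, so $(\log g)_{z\bar z} = \tfrac12 g$ in the normalization used there; I would keep track of this constant carefully. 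Now contracting $\omega_{\cX}$ with $V = \partial/\partial s + a^z\,\partial/\partial z$ and pairing against $\partial/\partial\bar z$ picks out exactly $(\log g)_{s\bar z} + a^z(\log g)_{z\bar z}$, and setting this equal to zero yields
\[
a^z = -\frac{(\log g)_{s\bar z}}{(\log g)_{z\bar z}} = -\frac{1}{g}\frac{\partial^2 \log g(z,s_0)}{\partial s\,\partial\bar z},
\]
which is the asserted formula. One should also check the pairing against $\partial/\partial\bar s$ is not required — that would constrain the vertical-versus-base splitting, not the perpendicularity to fibers — so only the $\partial/\partial\bar z$ condition enters.

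The main obstacle, such as it is, lies less in the algebra than in justifying the manipulations at the level of regularity: $g = g_{\bf a}$ is only a conical metric, and $\log g$ has the prescribed logarithmic singularities at the punctures, so a priori the mixed derivative $\partial^2\log g/\partial s\,\partial\bar z$ needs to be controlled there. Here I would invoke Theorem~\ref{smooth}: writing $g_{\bf a} = \exp(\sum_j a_j\Psi_j + w)\,g_0$ with the trivialization holomorphic near each section, the singular part $\sum_j a_j\Psi_j$ is independent of $s$ (since $\Psi_j = -\log(|z|^2)$ there), so $\partial_s\log g_{\bf a} = \partial_s(\log g_0 + w)$ is as regular as $w$, which is $C^\infty$ into $H^p_2(X)\subseteq C^0(X)$. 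Thus on the punctured surface $X'$ everything is smooth, the formula for $a^z$ holds there, and the horizontal lift is a well-defined smooth vector field on $\cX' = \cX\setminus\bigcup_j\sigma_j(S)$; away from the punctures this is exactly the classical computation. I would phrase the proof for the compact case as in the excerpt and remark that the conical case follows verbatim on $X'$ by the regularity just recalled.
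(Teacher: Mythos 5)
Your proof is correct and follows essentially the same (one-line) argument as the paper, which likewise obtains the formula by computing the inner product of $V_{hor}$ with $\partial/\partial z$ relative to $\omega_\cX$, i.e.\ $(\log g)_{s\bar z} + a^z(\log g)_{z\bar z}=0$. The only slip is the parenthetical claim that $(\log g)_{z\bar z}=\tfrac12 g$: matching the $\tfrac{\sqrt{-1}}{2}$ prefactor of $\omega_\cX$ with that of the fiber restriction $\tfrac{\sqrt{-1}}{2}\,g\,dz\wedge\overline{dz}$ gives $(\log g)_{z\bar z}=g$ (the hyperbolicity equation), which is what your final formula in fact uses.
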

The Lemma follows immediately from the computation of the inner product of
$V_{hor}$ and $\pt/\pt z$ with respect to $\omega_\cX$ .

So far general theory implies the following:
\begin{proposition}\label{pr:harmbel}
The harmonic Beltrami differential corresponding to the tangent vector
$\partial/\partial s|_{s_0}$ is induced by the horizontal lift. It equals
$$
\mu= \mu(z)\frac{\partial}{ \partial z} \  d
\ol{z} = \frac{\pt a^z}{\pt\ol{z}} \frac{\partial}{ \partial z} \  d
\ol{z}= - \frac{\partial}{\partial \ol z} \left( \frac{1}{g}\frac{\partial^2
\log g(z,s_0)}{\partial s \partial \ol z} \right) \frac{\partial}{ \partial z} \  d
\ol{z}.
$$
\end{proposition}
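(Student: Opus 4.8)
The plan is to combine the two preceding lemmas with the general description of the Kodaira--Spencer map in Dolbeault cohomology. By the Proposition on $\ol\partial$-closedness, any differentiable lift $V$ of $\partial/\partial s|_{s_0}$ produces a representative $\ol\partial V|X$ of the Kodaira--Spencer class $\rho(\partial/\partial s|_{s_0})$. The idea is to feed in the \emph{particular} lift $V_{hor}$, which by Lemma~\ref{le:hori} is $V_{hor} = \partial/\partial s|_{s_0} + a^z\,\partial/\partial z$ with $a^z = -\frac{1}{g}\,\partial^2\log g(z,s_0)/\partial s\,\partial\ol z$. Restricting to $X$, the $s$-component is annihilated and we obtain
\[
\ol\partial V_{hor}|X \;=\; \frac{\partial a^z}{\partial\ol z}\,\frac{\partial}{\partial z}\,d\ol z,
\]
which is exactly the claimed Beltrami differential. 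So the first step is purely formal: write out $\ol\partial$ applied to the lift in local coordinates and read off the $(0,1)$-form with values in $\cT_X$.

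Next I would identify this Dolbeault representative with the harmonic one. The key point is that $\mu = \frac{\partial a^z}{\partial\ol z}\,\frac{\partial}{\partial z}\,d\ol z$ is already the harmonic representative, because $V_{hor}$ was built to be pointwise orthogonal to the fibers with respect to $\omega_\cX$, and by Lemma~\ref{le:hori}'s lemma this orthogonality is precisely the harmonicity condition. More concretely, one checks that $g\cdot\mu(z)$ is anti-holomorphic: since $\mu(z) = \partial_{\ol z}(-g^{-1}\partial_s\partial_{\ol z}\log g)$, a direct computation using the fiberwise hyperbolicity equation $\partial_z\partial_{\ol z}\log g = \tfrac12 g$ (constant curvature $-1$) shows that $\overline{g\,\mu(z)}$ is holomorphic in $z$, so $\overline{g\,\mu}\,dz^2$ is a holomorphic quadratic differential and $\mu$ has the form $\overline{\phi}/g$ required in Definition~\ref{harmbelt}. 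Thus $\mu$ lies in $H^1(X,\bf a)$ and represents $\rho(\partial/\partial s|_{s_0})$, which gives both assertions of the proposition.

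The main obstacle is the regularity at the punctures: one must ensure that the formula for $\mu$ actually defines an element of $H^1(X,\bf a)$ in the sense of Definition~\ref{harmbelt}, i.e.\ that $\overline{g_{\bf a}\,\mu}$ extends to a holomorphic quadratic differential with at most simple poles at the $p_j$. Here I would invoke Theorem~\ref{smooth}: near a puncture the trivialization is holomorphic and $\Psi_j = -\log|z|^2$ is independent of $s$, so the singular part of $\log g_{\bf a}$ does not contribute to $\partial_s\partial_{\ol z}\log g_{\bf a}$, and the potentially dangerous terms come only from the $C^\infty$ part $w$ (together with the smooth background metric). Controlling the order of the pole of $\overline{g_{\bf a}\mu}$ at $p_j$ then reduces to the local behaviour of $g_{\bf a} = (\rho/|z|^{2a_j})|dz|^2$ with $\rho$ real-analytic in $|z|^{2(1-a_j)}$ (Remark~\ref{re:grealana}), which bounds the pole appropriately. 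Once this local analysis is in place, the duality pairing of the earlier Proposition identifies the class with the correct element of $H^1(X,\Theta_X(-D))$, completing the argument.
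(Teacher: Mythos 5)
Your proposal is correct and follows essentially the route the paper takes (and leaves largely implicit): apply the Dolbeault description of the Kodaira--Spencer map to the particular lift $V_{hor}$ to read off $\mu=\pt_{\ol z}a^z\,\pt_z\,\ol{dz}$, then verify harmonicity by the direct computation showing $g(z,s_0)\ol{\mu(z)}$ is a holomorphic quadratic differential, using the constant-curvature equation for the fiber metrics. Two minor remarks: the paper's normalization is $\pt_z\pt_{\ol z}\log g=g$ rather than $\tfrac12 g$ (the cancellation in the verification is insensitive to this constant), and your final paragraph on pole orders at the punctures addresses the conical analogue, which is not part of this proposition (stated in the classical compact setting) and is handled separately in Lemma~\ref{le:phiL1} and Theorem~\ref{th:ks}.
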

In fact, a straightforward verification shows that $g(z,s_0) \ol{\mu(z)}$
is a holomorphic quadratic differential, i.e.\ $\mu$ is {\it harmonic}
with respect to the hyperbolic metric on $X$.

Now let $( \mathcal{X},  { \bf a }) \to S$ be a holomorphic family of
weighted Riemann surfaces with $ 0 < a_j < 1$, and with central fiber $X =
\mathcal{X}_{s_0}$, $s_0 \in S$. This section is concerned with how to
recover the Kodaira-Spencer map $ \rho : T_{s_0}(S) \rightarrow H^1(X,  {
\bf a })$ from the family of conical hyperbolic metrics $ g_{ \bf a}$.

In the case of conical hyperbolic metrics we define the Beltrami
differential  given by
\begin{equation}\label{Belt}
\mu_{ {\bf a}}\left(\frac{\partial}{\partial s}\right) =-
{\frac{\partial}{\partial \ol{z}} \left(\frac{1}{g_{ {\bf a}}}
\frac{ \partial^2 \log g_{ {\bf a}}}{\partial \ol{z} \
\partial s} \right)} \frac{\partial}{ \partial z} \  d
\ol{z}.
\end{equation}
and the quadratic differential $\phi_{ {\bf a}}(\frac{\partial}{\partial s}) =
g_{ {\bf a}} \overline{ \mu_{ {\bf a}}}(\frac{\partial}{\partial s}) $.

In order to prove that the above Beltrami differential $ \mu_{ {\bf
a}}\left(\frac{\partial}{\partial s}\right)  $ is harmonic in the sense of
Definition~\ref{harmbelt} it is sufficient to show the following:

\begin{lemma}\label{le:phiL1}
 $\phi_{ {\bf a}}(\frac{\partial}{\partial s})  $ is in $L^1(X)$. \label{L1}
\end{lemma}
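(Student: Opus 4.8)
The plan is to show that $\phi_{\bf a}(\partial/\partial s)$ is locally integrable near each puncture $p_j$, since away from the punctures it is a smooth quadratic differential on a compact set and integrability is automatic. By definition $\phi_{\bf a} = g_{\bf a}\,\overline{\mu_{\bf a}}$ and $\mu_{\bf a} = -\,\pt_{\ol z}\!\left(\frac{1}{g_{\bf a}}\,\pt_{\ol z}\pt_s \log g_{\bf a}\right)\pt_z\,d\ol z$, so that $\phi_{\bf a} = -\,g_{\bf a}\,\overline{\pt_{\ol z}\!\left(\frac{1}{g_{\bf a}}\,\pt_{\ol z}\pt_s \log g_{\bf a}\right)}\,dz^2$. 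The central fact to exploit is Theorem~\ref{smooth}: writing $g_{\bf a} = \exp(a_1\Psi_1+\dots+a_n\Psi_n + w)\,g_0$ with the differentiable trivialization holomorphic near the sections and $\Psi_j = -\log|z|^2$ independent of $s$ near $p_j$, the function $w(s)$ is $C^\infty$ as a map into $H^p_2(X) \subseteq C^0(X)$ for $1\le p < \min(1/a_j)$, and all $V_0$- and $\ol{V_0}$-derivatives lie in $H^p_2(X)$.

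First I would fix a puncture, say $p_j$ with coordinate $z$, drop the index, and write $\log g_{\bf a} = -a\log|z|^2 + \log g_0(z,s) + w(z,s)$. Since near the puncture the trivialization is holomorphic and $\Psi = -\log|z|^2$ does not depend on $s$, differentiating in $s$ kills the singular term: $\pt_s\log g_{\bf a} = \pt_s\log g_0 + \pt_s w$, which is bounded (indeed in $H^p_2 \subseteq C^0$) by Theorem~\ref{smooth}. Then $\pt_{\ol z}\pt_s\log g_{\bf a} = \pt_{\ol z}\pt_s w + \pt_{\ol z}\pt_s\log g_0$, an $L^p$ function near the puncture. Dividing by $g_{\bf a} = e^{-a\log|z|^2+\dots} g_0 = |z|^{-2a}\rho(z,s)$ with $\rho$ continuous and positive gives a function comparable to $|z|^{2a}$ times an $L^p$ function, hence an $L^p$ function which vanishes at the puncture. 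The remaining $\pt_{\ol z}$ derivative in the definition of $\mu_{\bf a}$ is the delicate point: it must be understood distributionally, and one uses that $\frac{1}{g_{\bf a}}\pt_{\ol z}\pt_s\log g_{\bf a}$ lies in $H^{p}_1$ near the puncture, so its $\pt_{\ol z}$ derivative is in $L^p$; then multiplying back by $g_{\bf a} \sim |z|^{-2a}$ shows $\phi_{\bf a}$ behaves like $|z|^{-2a}$ times an $L^p$ function near $p_j$.

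It then remains to check that $|z|^{-2a}\cdot(L^p\text{ function})$ is in $L^1$ near $z=0$. By H\"older with exponents $p$ and $q = p/(p-1)$, this integral is bounded by $\||z|^{-2a}\|_{L^q(\text{disk})}$ times an $L^p$ norm, and $\int_{|z|<\epsilon}|z|^{-2aq}\,dA < \infty$ precisely when $2aq < 2$, i.e.\ $aq < 1$, i.e.\ $q < 1/a$; this holds because $p < \min(1/a_i) \le 1/a$ was chosen so that the conjugate exponent satisfies $q = p/(p-1)$, and one must verify that the admissible range $1\le p<1/a$ does contain values for which $q<1/a$ — this forces $a < 1/2$ in general, so for $a\ge 1/2$ one argues more carefully, using that the relevant $L^p$ function actually vanishes at the puncture (by continuity of the Sobolev embedding, as noted above the $s$-derivatives of $w$ are continuous and the singular terms drop out), improving integrability enough to absorb $|z|^{-2a}$ for the full range $0<a<1$.

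The main obstacle I expect is this last integrability bookkeeping at the puncture: controlling the precise rate of decay/blowup of $\frac{1}{g_{\bf a}}\pt_{\ol z}\pt_s\log g_{\bf a}$ and of its $\pt_{\ol z}$-derivative, and pairing it against the $|z|^{-2a}$ growth of $g_{\bf a}$, so that the product is genuinely in $L^1$ for every weight $0<a_j<1$ rather than only for small weights. The clean way around it is to organize the estimate around Theorem~\ref{smooth} and the explicit form $g_{\bf a} = |z|^{-2a}\rho$ with $\rho$ bounded above and below, reducing everything to showing that the two $\pt_{\ol z}$ operations each cost at most an $L^p$ loss while the $s$-differentiation removes the logarithmic singularity entirely, and then invoking Remark~\ref{re:grealana} if one needs the sharper real-analytic structure of $\rho$ to close the estimate in the borderline range.
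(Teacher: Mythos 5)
Your overall strategy (localize at a puncture, invoke Theorem~\ref{smooth} and the decomposition $\log g_{\bf a} = \log g_0 + w - a\log|z|^2$, and note that the $s$- or $\bar s$-derivative kills the singular term) is the right starting point and matches the paper's, but the way you organize the estimate leaves a genuine gap precisely in the range $1/2\le a<1$, and your proposed repair does not work. Two specific problems. First, the assertion that $\frac{1}{g_{\bf a}}\,\partial_{z}\partial_{\bar s}\log g_{\bf a}$ lies in $H^p_1$ near the puncture is unproven and is essentially the whole content of the lemma: differentiating this quotient produces the term $-(\partial_z\log g_{\bf a})\cdot\frac{1}{g_{\bf a}}\,\partial_z\partial_{\bar s}\log g_{\bf a}$, and $\partial_z\log g_{\bf a}$ contains the genuinely singular summand $-a/z$, which is only in $L^q$ for $q<2$; it cannot simply be absorbed. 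Second, your final reduction to ``$|z|^{-2a}$ times an $L^p$ function'' is too lossy: as you yourself observe, H\"older then fails for $a\ge 1/2$, and the fix you suggest --- that the relevant $L^p$ function vanishes at the puncture --- is false. That function is a $z$- (or $\bar z$-) derivative of an $H^p_1$ object, hence merely $L^p$; continuity of $w$ and of its $s$-derivatives (which live in $H^p_2\subseteq C^0$) says nothing about continuity, let alone vanishing, of their second $z$-derivatives.

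The paper closes the gap by never treating the quotient $\frac{1}{g_{\bf a}}(\cdots)$ as a single object. Expanding $g_{\bf a}\,\partial_z\bigl(g_{\bf a}^{-1}\,\partial_z\partial_{\bar s}\log g_{\bf a}\bigr)$ gives
$$
\phi_{\bf a} = \frac{\partial\log g_{\bf a}}{\partial z}\cdot\frac{\partial^2\log g_{\bf a}}{\partial z\,\partial\bar s} \;-\; \frac{\partial^3\log g_{\bf a}}{\partial z^2\,\partial\bar s},
$$
and the two terms are estimated separately. The third-derivative term has no singular contribution (again because $\partial_{\bar s}$ annihilates $-a\log|z|^2$) and lies in $L^p\subseteq L^1$ by Theorem~\ref{smooth}. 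For the product term the key is a sharper exponent count than yours: $\partial_z\log g_{\bf a}\in L^q$ for all $q<2$ because of $-a/z$, while $\partial_z\partial_{\bar s}\log g_{\bf a}\in H^p_1\subseteq L^h$ by Sobolev embedding with $h<p'=2p/(2-p)$; taking $p$ close to $1/a$ gives $h$ up to $\frac{1}{a-1/2}>2$ when $a>1/2$ (and arbitrarily large finite $h$ when $a\le 1/2$). Since $\frac12+(a-\frac12)=a<1$, one can choose $q<2$ and $h$ in these ranges with $\frac1q+\frac1h\le 1$, and H\"older yields $L^1$ for the full range $0<a<1$. To salvage your write-up, replace the ``$|z|^{-2a}$ times $L^p$'' bookkeeping by this two-term splitting and the $L^q\times L^h$ pairing.
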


\begin{proof}
Again we use the special coordinates for the family near the punctures. For
simplicity we assume $n=1$ and set $ 0 < a = a_1 < 1,  \ g_a = g_{ \bf a }$. We
have
\[
\phi_{\bf a}\left(\frac{\partial}{\partial s}\right) = \frac{
\partial \log g_{a}}{\partial z } \cdot  {\frac{
\partial^2 \log g_{a}}{\partial z \ \partial \ol{s}}
 } - { \frac{ \partial^3 \log g_{a}}{\partial z^2 \
\partial \ol{s}} }.
\]
Moreover in local coordinates the following equation holds:
\begin{equation}\label{L^1}
\log(g_a) = \log(g_0) + w -a \log(|z|^2).
\end{equation}

Now  by Theorem \ref{smooth} we have for $1 \leq p < \frac{1}{a} $ that
$$
\frac{ \partial w}{\partial z},\frac{ \partial^2  w}{ \partial \bar{s}\partial z} \in H_1^p(\mathcal{U}_1)
$$
whereas
$$
\frac{ \partial^3  w}{ \partial \bar{s} \partial z^2} \in L^p(\mathcal{U}_1).
$$
Therefore by equation \eqref{L^1}
$$
{ \frac{ \partial^3 \log g_{\bf
a}}{\partial z^2 \ \partial \ol{s}} } \in L^1(\mathcal{U}_1).
$$

Moreover  $ 1/z \in L^q(\mathcal{U}_1) $ therefore
$$
\frac{\partial \log g_{\bf a}}{\partial z }  \in L^q(\mathcal{U} _1)
\text{\quad for \quad} 1 \leq q < 2\;.
$$

By the Sobolev embedding theorem $ H_1^p(\mathcal{U}_1) \subseteq L^{h}(
\mathcal{U}_1)$ for all $h < p'$, where $p' = \frac{2p}{2-p} $ for $ 1 \leq p <
2$ and $p' = \infty$ for  $p \geq 2$.

If follows that
\begin{eqnarray*}{\frac{\partial^2 \log g_{\bf a}}{\partial z \ \partial \ol{s}} }  &\in& L^h(\mathcal{U}_1)
\text{\; for \;} 1 \leq h < \infty \text{\; if \;} 0 < a \leq 1/2  \\  \text{and}\hfill &&\\
{\frac{\partial^2 \log g_{\bf a}}{\partial z \ \partial \ol{s}} }  &\in& L^h(\mathcal{U}_1) \text{\; for \;}
1 \leq h <   \frac{1}{a -1/2}  > 2  \text{\; if \;} 1/2 < a < 1 .
\end{eqnarray*}
Hence for $ 0 < a < 1 $
$$
\frac{ \partial \log g_{\bf a}}{\partial z } \cdot
{\frac{ \partial^2 \log g_{\bf a}}{\partial z \ \partial \ol{s}}  }  \in
L^1(\mathcal{U}_1).
$$
\end{proof}

So far we only showed that, on one hand $H^1(X, { \bf a})$ is the space of
infinitesimal   deformations, and that on the other hand, the variation of
hyperbolic conical metrics gives rise to element of this space according to
(\ref{Belt}). If this assignment is injective for effective families, then we
recovered the Kodaira-Spencer map.

\begin{theorem}\label{th:ks}
The Kodaira-Spencer map $ \rho: T_{s_0} S \rightarrow H^1(X, { \bf a})$ is
given by
\[
\rho\left( \frac{\partial}{\partial s}\right) = \mu_{ {\bf a}}\left(\frac{\partial}{\partial s}\right) = - \left.{ \frac{ \partial}{ \partial \ol{z}} \left(
\frac{1}{g_{\bf a}} \frac{ \partial^2 \log(g_{\bf a}(z,s))}{\partial \ol{z} \
\partial s} \right) }\right|_{s=s_0} \ \frac{\partial}{ \partial z} \  d
\ol{z}
\]
where $   \frac{\partial}{\partial s} $  stands for a tangent vector.
\end{theorem}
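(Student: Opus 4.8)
The plan is to show that the explicitly constructed harmonic Beltrami differential $\mu_{\bf a}(\partial/\partial s)$ of equation~\eqref{Belt} represents the Kodaira-Spencer class $\rho(\partial/\partial s)$ in $H^1(X,\Theta_X(-D)) \cong H^1(X,{\bf a})$. By Lemma~\ref{le:phiL1} we already know $\phi_{\bf a}(\partial/\partial s) = g_{\bf a}\,\overline{\mu_{\bf a}(\partial/\partial s)}$ is in $L^1(X)$, so that $\mu_{\bf a}(\partial/\partial s)$ genuinely lies in the space $H^1(X,{\bf a})$ of Definition~\ref{harmbelt} (it is a bona fide harmonic Beltrami differential: $g_{\bf a}\overline{\mu_{\bf a}}$ is a holomorphic quadratic differential on $X'$ with at worst simple poles at the punctures, by the same straightforward verification as in the classical case following Proposition~\ref{pr:harmbel}). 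What remains is to identify this class with the image of $\partial/\partial s$ under the connecting homomorphism $\rho$.

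The strategy is to compare $\mu_{\bf a}$ with the Dolbeault representative coming from a smooth lift. By the Proposition preceding Lemma~\ref{le:hori}, for \emph{any} differentiable lift $V$ of $\partial/\partial s$ along $X$, the restriction $\overline{\partial}V|X$ is $\overline{\partial}$-closed and represents $\rho(\partial/\partial s)$. So I would compute $\overline{\partial}V_0|X$ for the concrete trivialization lift $V_0 = \partial/\partial s + b_1\,\partial/\partial z + b_2\,\partial/\partial\overline{z}$ fixed earlier in Section~\ref{conic}, and then show that the difference $\mu_{\bf a}(\partial/\partial s) - \overline{\partial}V_0|X$ is $\overline{\partial}$-exact, i.e.\ equals $\overline{\partial}$ of some vector field on $X'$ of the appropriate growth at the punctures (so that it vanishes in $H^1(X,\Theta_X(-D))$). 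Concretely: form the vector field $V_{\mathrm{hor}}$-type correction $W = V_0 - \frac{1}{g_{\bf a}}\frac{\partial^2\log g_{\bf a}}{\partial s\,\partial\overline{z}}\,\frac{\partial}{\partial z}$ (the analogue of the horizontal lift of Lemma~\ref{le:hori}); then $\overline{\partial}W|X$ should equal $-\overline{\partial}\!\left(\frac{1}{g_{\bf a}}\frac{\partial^2\log g_{\bf a}}{\partial s\,\partial\overline{z}}\right)\frac{\partial}{\partial z}\,d\overline{z} = \mu_{\bf a}(\partial/\partial s)$ up to the $b_2\,\partial/\partial\overline{z}$ term, and one checks using $V_0|W_j = \partial/\partial s$ that near the punctures everything is holomorphic, so no boundary contributions appear. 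Alternatively, and perhaps more cleanly, I would invoke the $\overline{\partial}$-cohomology of $X'$ with controlled growth: since $\mu_{\bf a} - \overline{\partial}V_0|X$ is $\overline{\partial}$-exact on the compact surface away from the punctures and both terms are integrable (again by Lemma~\ref{le:phiL1}), a standard removable-singularity / Serre-duality pairing argument shows the classes agree — namely, the pairing $\int_X \phi\,\mu_{\bf a}\,dz\,d\overline{z}$ against every $\phi \in H^0(X,\Omega^2_X(D))$ equals $\int_X \phi\,(\overline{\partial}V_0|X)$, because their difference is $\int_X \phi\,\overline{\partial}(\text{vector field})$, which integrates by parts to $0$ since $\phi$ is holomorphic and the simple poles of $\phi$ are killed by the vanishing order of the vector field at the punctures (the vector field comes from $\Theta_X(-D)$).

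The key analytic input — and the main obstacle — is controlling the behaviour at the punctures: one must verify that the correction vector field $W$ (equivalently $\frac{1}{g_{\bf a}}\frac{\partial^2\log g_{\bf a}}{\partial s\,\partial\overline{z}}\frac{\partial}{\partial z}$) has enough decay to vanish at $p_j$, so that it represents a section of $\Theta_X(-D)$ over a punctured neighbourhood and the integration by parts against $\phi \in H^0(X,\Omega^2_X(D))$ produces no residue. This is exactly where the estimates behind Theorem~\ref{smooth} and Lemma~\ref{le:phiL1} are used: from $\log g_{\bf a} = \log g_0 + w - a\log|z|^2$ and $w(s)\in H^p_2$, one reads off that $\frac{\partial^2\log g_{\bf a}}{\partial s\,\partial\overline{z}}$ is $O(|z|^{-\epsilon})$-type while $1/g_{\bf a} \sim |z|^{2a}$, giving a net vanishing like $|z|^{2a-\epsilon}$, enough to force the residue to zero since $\phi$ has only a simple pole. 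Once this local vanishing is established, the global identification $\rho(\partial/\partial s) = \mu_{\bf a}(\partial/\partial s)$ follows formally from the Dolbeault description of the Kodaira-Spencer map together with the nondegeneracy of the pairing $\Phi$ from the Proposition identifying $H^1(X,{\bf a})$ with $H^1(X,\Theta_X(-D))$.
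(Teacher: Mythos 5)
Your setup is fine --- using Lemma~\ref{le:phiL1} to see that $\phi_{\bf a}=g_{\bf a}\ol{\mu_{\bf a}}$ is a holomorphic quadratic differential in $L^1$, hence with at most simple poles, so that $\mu_{\bf a}(\pt/\pt s)\in H^1(X,{\bf a})$, is exactly how the paper prepares the theorem. The problem is the step you yourself flag as ``the key analytic input.'' Your direct Dolbeault comparison requires the correction term $a^z\pt_z$ with $a^z=-\frac{1}{g_{\bf a}}\frac{\pt^2\log g_{\bf a}}{\pt s\,\pt\ol z}$ to vanish at the punctures to a definite order, and you derive this from the assertion that $\frac{\pt^2\log g_{\bf a}}{\pt s\,\pt\ol z}$ is ``$O(|z|^{-\epsilon})$-type.'' That does not follow from Theorem~\ref{smooth} or Lemma~\ref{le:phiL1}: those give only that $\pt^2 w/\pt s\,\pt\ol z$ lies in $H^p_1\subset L^h$ for suitable $h$, and for $a\geq 1/2$ one is forced to take $p<2$, so Sobolev membership yields no pointwise bound near the puncture, let alone decay. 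Pointwise estimates of this kind are genuinely harder; the paper only obtains them much later (and only for $a>1/2$) via the $\ol z$-anti-derivative construction of Lemma~\ref{le:anti} together with Remark~\ref{re:grealana}, by splitting the relevant quantity into an explicit anti-derivative plus a holomorphic remainder that is forced to be bounded. As written, your residue computation at the punctures --- and hence the claim that $\mu_{\bf a}(\pt/\pt s)-\ol\pt V_0|X$ is exact in the right cohomology --- is unsupported precisely in the range $1/2\leq a<1$ where the theorem requires new work.

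The paper's own proof takes a different, softer route that avoids pointwise estimates altogether: granting the discussion preceding the theorem, it suffices to show that $\mu_{\bf a}(\pt/\pt s)\equiv 0$ forces the Kodaira--Spencer class to vanish. Under that hypothesis $a^z$ is holomorphic off the puncture and lies in $L^2$ near it (since $\frac{\pt^2\log g_{\bf a}}{\pt s\,\pt\ol z}\in H^p_1\subset L^2$ and $1/g_{\bf a}$ is bounded), hence extends holomorphically, so $W_{s_0}=\pt_s+a^z\pt_z$ is a holomorphic lift; and since $g_{{\bf a},s\ol z}=\frac{\rho(z)}{|z|^{2a}}\,(-a^z)$ is in $L^2$ while $|z|^{-2a}$ is not for $a\geq 1/2$, one gets $a^z(0)=0$, so the lift also fixes the puncture infinitesimally and the class vanishes (the case $a<1/2$ is quoted from \cite{S-T}). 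This replaces your decay estimate by a one-line $L^2$ dichotomy, at the price of arguing only in the degenerate case $\mu_{\bf a}=0$. To salvage your direct approach you would need either to import the machinery of Lemma~\ref{le:anti} to get honest pointwise bounds on $a^z$, or to replace pointwise decay by a limiting argument over circles $|z|=\varepsilon$ using integrability alone, and in either case to check that the whole range $0<a<1$ is covered.
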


\begin{proof}
We may assume that $S$ is a disk and that we only have one puncture. If $0
< a < 1/2$ the proof of the Theorem is given in \cite[Theorem 5.4]{S-T},
so we suppose $1/2 \leq a < 1$. Let $ \mu_{ {\bf
a}}\left(\frac{\partial}{\partial s}\right) \equiv 0$. Then the locally
defined quantity $\left.\frac{1}{g_a} \frac{
\partial^2 \log(g_a(z,s))}{\partial \ol{z} \ \partial s} \right|_{s=s_0}
$ is holomorphic outside the punctures, and the vector field
\[
W_{s_0} = \frac{\partial}{\partial s}+\gamma(z) \frac{\partial}{\partial z} = \frac{\partial}{\partial s}
- \left(\left.\frac{1}{g_{{ a}}} \frac{\partial^2\log(g_{ { a}}(z,s))}{\partial
\ol{z} \ \partial s}  \right|_{s= s_0}\right) \frac{\partial}{ \partial z}
\]
is a lift of the tangent vector
$ \frac{\partial}{\partial s} $ which is holomorphic outside the punctures. We
know from the proof of Lemma  \ref{L1} that $\left.\frac{ \partial^2 \log(g_{
a}(z,s))}{\partial \ol{z} \ \partial s} \right|_{s=s_0}$ is in $H_1^p(
\mathcal{U}_1)  \subseteq L^2( \mathcal{U}_1)$ for  some $ p >1$. Since
$\frac{1}{g_a}$ is bounded, the function $\left. \frac{1}{g_a} \frac{
\partial^2 \log(g_{ a}(z,s))}{\partial \ol{z} \ \partial s}   \right|_{s=s_0}
$ is also in $ L^2( \mathcal{U}_1)$, hence  the vector field is
holomorphic on the compact surface. So the holomorphic structure of the
corresponding compact Riemann surfaces is infinitesimally constant.
However, the puncture need not be kept fixed. Given the choice of local
coordinates, we need to show that the vector field $ W_{s_0}$ equals  $
\frac{\partial}{\partial s} $ at $z=0$.  We already observed that $
\left.\frac{ \partial^2 \log(g_ a(z,s))}{\partial \ol{z} \ \partial s}
\right|_{s=s_0} = \frac{\rho(z)}{|z|^{2a}} \gamma(z) $ is in
$L^2(\mathcal{U}_1)$, however for $1/2 \leq a < 1$ the function $
\frac{1}{|z|^{2a}}$ is not in $L^2( \mathcal{U}_1)$, hence
$\gamma(s_0)=0.$
\end{proof}

\section{Horizontal lifts of tangent vectors}\label{hor}
Let $ f : ( \mathcal{X},  { \bf a }) \rightarrow S$ be the universal
holomorphic family of weighted Riemann surfaces over the \tei space, or
for computational purposes, a family over the disk. Observe that like in
the classical case the family of conical metrics will give rise to a
$C^\infty$ closed, real $(1,1)$-form
$$
\omega_{\mathcal{X}} =  \frac{\sqrt{-1}}{2} \partial_{\mathcal{X}} \ol{
\partial}_{\mathcal{X}} \log(g_{\bf a})
$$
on the complement of the punctures, which is positive, when restricted to
the fibers.

Assume that  $ 1 < a_j < 1$ for $ 1 \leq j \leq n$. Let $S = \{ s \in \C ;
|s| < 1 \}$ and denote by $X =\mathcal{X}_0$ the central fiber. As in
Section \ref{conic} we use a differentiable trivialization of the family
so that the Sobolev spaces of the fibers can be identified.

We will denote the coefficients of $\omega_\cX$ by
\begin{eqnarray}
{g_{\bf a}}_{s \ol{s}}&=& \frac{\pt^2 \log g_{\bf a}(z,s)}{\pt s \ol{\pt s}} \label{eq:gzz} \\
{g_{\bf a}}_{s \ol{z}}&=& \frac{\pt^2 \log g_{\bf a}(z,s)}{\pt s \ol{\pt z}} \label{eq:gsz} \\
{g_{\bf a}}_{z \ol{s}}&=& \frac{\pt^2 \log g_{\bf a}(z,s)}{\pt z \ol{\pt s}} \label{eq:gzs} \\
{g_{\bf a}}_{z \ol{z}}&=& \frac{\pt^2 \log g_{\bf a}(z,s)}{\pt z \ol{\pt z}} \label{eq:gss} .
\end{eqnarray}
As pointed out above, hyperbolicity translates into
\begin{equation}\label{eq:hy}
{g_{\bf a}}_{z
\ol{z}}= g_{\bf a}.
\end{equation}
Like in Lemma~\ref{le:hori} we have that the horizontal lift of $\pt/\pt
s$ is given by
$$
V= (\pt/\pt s) + a^z(z)(\pt/\pt z)
$$
with
\begin{equation}\label{eq:as}
a^z=\frac{-1}{g_{\bf a}}   {g_{\bf a}}_{s \ol{z}} .
\end{equation}
The function
\begin{equation}
\chi =  {g_{\bf a}}_{s\ol{s}}  - \frac{1}{g_{\bf a}} {g_{\bf a}}_{s \ol{z}}  \ {g_{\bf a}}_{z \ol{s}}
={g_{\bf a}}_{s\ol{s}}- {g_{\bf a}}a^z(z) \ol{a^z(z)}
\end{equation}
has various geometric meanings:
\begin{proposition}
Let $\mu_{\bf a} \in  H^1(\mathcal{X}_{s_0}, {\bf a})$ be the harmonic
Beltrami differential according to \eqref{Belt}. Then
\begin{eqnarray}
  \chi &=& \|V\|^2_{\omega_\cX}\\
\omega_{\mathcal{X}}^2 &=& \left(\frac{\sqrt{-1}}{2}\right)^2  \chi(z,s) g_{\bf a}(z,s)d z \wedge d \bar{z} \wedge
d s  \wedge  d \bar{s}.  \label{omega}\\
| \mu_{\bf a}|^2    &=&\left( - \Delta_{g_{\bf a}} + id \right) \chi \label{laplace}
\end{eqnarray}
\end{proposition}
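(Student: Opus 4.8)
The plan is to prove the three identities in turn, the last being the substantial one. The first, $\chi = \|V\|^2_{\omega_\cX}$, is a direct computation: by definition $\|V\|^2_{\omega_\cX}$ is obtained by pairing $V = \pt/\pt s + a^z \pt/\pt z$ with itself using the Hermitian form with coefficients ${g_{\bf a}}_{s\ol s}, {g_{\bf a}}_{s\ol z}, {g_{\bf a}}_{z\ol s}, {g_{\bf a}}_{z\ol z}$, and since $V$ is horizontal (perpendicular to the fiber direction $\pt/\pt z$) by Lemma~\ref{le:hori}, the cross terms collapse and one is left precisely with ${g_{\bf a}}_{s\ol s} - {g_{\bf a}}_{z\ol z}\, a^z \ol{a^z}$; using hyperbolicity \eqref{eq:hy}, ${g_{\bf a}}_{z\ol z} = g_{\bf a}$, and the definition \eqref{eq:as} of $a^z$, this is exactly $\chi$. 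The second identity, the formula for $\omega_\cX^2$, follows by wedging $\omega_\cX = \tfrac{\sqrt{-1}}{2}\pt_\cX\ol\pt_\cX \log g_{\bf a}$ with itself: the $(dz\wedge d\bar z)\wedge(ds\wedge d\bar s)$ and $(dz\wedge d\bar s)\wedge(ds\wedge d\bar z)$ contributions combine, the determinant of the $2\times 2$ coefficient matrix appears, and after invoking \eqref{eq:hy} once more the coefficient is $({g_{\bf a}}_{s\ol s}g_{\bf a} - {g_{\bf a}}_{s\ol z}{g_{\bf a}}_{z\ol s}) = \chi\, g_{\bf a}$, up to the combinatorial factor $(\sqrt{-1}/2)^2$ and a sign bookkeeping in reordering the four one-forms. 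All of this is valid pointwise on $X' = X\setminus\{p_1,\dots,p_n\}$, where $g_{\bf a}$ is $C^\infty$.

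The heart of the matter is \eqref{laplace}: $|\mu_{\bf a}|^2 = (-\Delta_{g_{\bf a}} + \mathrm{id})\chi$. I would derive this on $X'$ by explicit differentiation. Write $\log g_{\bf a} = L$ for brevity. From \eqref{Belt}, $\mu_{\bf a} = (\pt a^z/\pt\ol z)\,\pt_z\, d\bar z$ with $a^z = -L_{z\ol s}/g_{\bf a}$ (note $L_{s\ol z} = \overline{L_{z\ol s}}$ since $L$ is real, so this matches \eqref{eq:as}), and $|\mu_{\bf a}|^2 = g_{\bf a}^{-1}\,|\pt a^z/\pt\ol z|^2$ because $\mu_{\bf a}$ is a $(-1,1)$-form whose pointwise norm with respect to $g_{\bf a}$ has that shape. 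The strategy is then to compute $\Delta_{g_{\bf a}}\chi = g_{\bf a}^{-1}\,\chi_{z\ol z}$ directly, using $\chi = L_{s\ol s} - g_{\bf a}\,a^z\ol{a^z}$ and repeatedly invoking the hyperbolicity relation $L_{z\ol z} = g_{\bf a}$ (equivalently $e^L = L_{z\ol z}$), which lets one trade third- and fourth-order $z,\ol z$-derivatives of $L$ for lower-order ones. Differentiating the relation $L_{z\ol z} = e^L$ in $s$ and $\ol s$ produces the identities $L_{z\ol z s} = e^L L_s = g_{\bf a} L_s$ etc., which are exactly what is needed to simplify the mess. After collecting terms, the combination $-\Delta_{g_{\bf a}}\chi + \chi$ should reduce to $g_{\bf a}^{-1}|\pt a^z/\pt\ol z|^2 = |\mu_{\bf a}|^2$; this is the classical Ahlfors-type computation (cf.\ the references \cite{Roy}, \cite{Wo} in the introduction), carried out here on the smooth locus $X'$.

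The main obstacle is not the differential identity itself — on $X'$ it is the same calculation as in the compact case — but justifying that \eqref{laplace} holds as an identity of functions on $X'$ in a way that is usable later, i.e.\ controlling the behaviour near the punctures. There one uses Remark~\ref{re:grealana} (the local form $g_{\bf a} = \eta(|z|^{2(1-a_j)})/|z|^{2a_j}$) together with the $C^\infty$-dependence on $s$ from Theorem~\ref{smooth} and the integrability statements established in the proof of Lemma~\ref{le:phiL1}: these guarantee that $\chi$, $\Delta_{g_{\bf a}}\chi$ and $|\mu_{\bf a}|^2$ are all well-defined near $p_j$ and that the pointwise identity on $X'$ is the correct one. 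So the proof is: (i) verify $\chi = \|V\|^2$ and the $\omega_\cX^2$ formula by direct pairing/wedging; (ii) on $X'$ compute $(-\Delta_{g_{\bf a}} + \mathrm{id})\chi$ using hyperbolicity and its $s,\ol s$-derivatives to land on $|\mu_{\bf a}|^2$; (iii) remark that the local analysis near the punctures (Remark~\ref{re:grealana}, Theorem~\ref{smooth}, Lemma~\ref{le:phiL1}) makes all three quantities meaningful there, so the identity is global on $X'$ as stated.
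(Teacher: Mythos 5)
Your proposal is correct and follows essentially the same route as the paper: (i) and (ii) are the same direct expansions, and for \eqref{laplace} the paper carries out exactly the computation you outline, organized with covariant derivatives so that hyperbolicity enters through the curvature term $R_{z\ol z}=-g_{\bf a}$ and through the harmonicity $\ol\mu=\varphi/g_{\bf a}$ of the Beltrami differential, both of which are consequences of $(\log g_{\bf a})_{z\ol z}=g_{\bf a}$, just as in your plan of differentiating the Liouville equation in $s$ and $\ol s$. One small correction: the pointwise quantity in \eqref{laplace} is $|\mu_{\bf a}|^2=|\mu(z)|^2=\left|\pt a^z/\pt\ol z\right|^2$ with no extra factor $g_{\bf a}^{-1}$, since raising an index with $g^{\ol z z}$ and lowering one with $g_{z\ol z}$ cancel; with your normalization the identity would come out off by a factor of $g_{\bf a}$, which the explicit computation would force you to correct.
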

\begin{proof}
For simplicity we will drop the index $\bf a$ and we set $\pt_s = \pt/\pt
s$ and $\pt_z = \pt/\pt z$ etc. The first claim follows from
$$
\|V\|^2_{\omega_\cX} = \langle \pt_s + a^z \pt_z, \pt_s + a^z\pt_z\rangle = g_{s\ol s} + a^z g_{z\ol s} +
\ol{ a^z} g_{s\ol z} + a^z \ol{ a^z} g_{z \ol z}
$$
by \eqref{eq:as} and \eqref{eq:hy}. Equation \eqref{omega} follows from
$$
\chi \cdot g = \chi \cdot g_{z \ol z} =
\det
\begin{pmatrix}
 g_{s\ol s} & g_{s\ol z} \\
 g_{\ol z s} & g_{z \ol z}
\end{pmatrix}.
$$
The proof of \eqref{laplace} will require some preparations.
\end{proof}
In order to compute integrals over the fibers involving certain tensors,
we will use covariant differentiation with respect to the hyperbolic
metrics on the fibers and use the semi-colon notation. For derivatives in
$s$-direction we will use the flat connection.

First, we note that
\begin{gather*}
g^2 \cdot g_{s\ol{s}} =g^2 \cdot (\log g)_{;s\ol s}=g\cdot {g_{;s\ol s}} -
{g_{;s}g_{;\ol s}} =  g\cdot{g_{;s\ol s}} - {g_{z\ol z;s}g_{z\ol z; \ol
s}} \\= g\cdot {g_{;s\ol s}} - {g_{s\ol z;z}}{g_{z \ol s;\ol z}} =
 g\cdot g_{;s\ol s} - {g^2}\cdot a^z_{\; ;z} \ol{a^z}_{\; ;\ol z}
\end{gather*}
i.e.\
$$
\frac{1}{g}g_{;s\ol s}=  g_{s \ol s} + a^z_{\; ;z} \ol{a^z}_{\; ;\ol z}.
$$
We combine this with
$$
g_{s\ol s; z\ol z}= (\log g)_{;s\ol s z \ol z}=(\log g)_{;z\ol z s \ol s}= g_{;s \ol s}
$$
and get
\begin{gather*}
\Delta_g(\chi) = \frac{1}{g} (g_{s \ol s} - g \cdot a^z \ol{a^z})_{;z \ol
z} = \frac{1}{g} g_{; s \ol s} -(a^z \ol{a^z})_{;z \ol z}\\
= g_{s \ol s} - a^z_{\; ;\ol z} \ol{a^z}_{\; ; z} - a^z_{\; z \ol z }
\ol{a^z} - a^z \ol{a^z}_{\; ;z \ol z }.
\end{gather*}
We know that
$$
\mu(z) = a^z_{\; ;\ol z},
$$
hence
$$
\ol{a^z}_{\; ;z \ol z }=\ol{\mu(z)}_{;\ol z} = \left(\frac{\varphi(z)}{g}\right)_{;\ol z} = 0,
$$
where $\varphi$ is some holomorphic quadratic differential. Furthermore in
terms of the curvature tensor $R^z_{\; z z \ol z}$ and Ricci tensor
$R_{z\ol z} = - g$ resp.
$$
a^z_{\; z \ol z } =  a^z_{\; \ol z z } + a^z R^z_{\; z z \ol z} = \ol{\mu}_{; z} + a^z (-R_{z \ol z})
= g \cdot a^z.
$$
So
$$
\Delta_g(\chi) = \chi - |\mu|^2
$$
which ends the proof of the Proposition.

The equations are so far established on the complement of the punctures.

\begin{lemma}\label{Sob}
Let $h_0 = \min_j( \frac{1}{1-a_j})$ and \; $q_0 =  \min \left(
\min_j(\frac{1}{ a_j}), \min_j(\frac{1}{1-a_j}) \right)$. Then
\begin{itemize}
\item[(i)] $\frac{ |\mu|^2 g_{\bf a}}{g_0} \in L^h(\mathcal{X}_{s_0})$
    for $1 \leq h < h_0$.
\item[(ii)] $ \chi  \in H_2^q(\mathcal{X}_{s_0})$ for $1 \leq q <
    q_0$.
\item[(iii)] The functions $ s \mapsto \frac{ |\mu|^2 g_{\bf a}}{g_0}
    \in L^h(\mathcal{X}_{s}) \equiv  L^h(X) $ and $ s \mapsto \chi \in
    H_2^q(\mathcal{X}_{s}) \equiv H_2^q(X) $ are both of class
    $C^\infty$.
\item[(iv)] For the coefficient of the harmonic Beltrami differential
    $\mu(z) \in H^p_1$ for $p<h_0$ holds.
\end{itemize}
\end{lemma}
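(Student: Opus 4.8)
The plan is to work in the special holomorphic coordinates near each puncture and to bootstrap from the regularity of $w$ established in Theorem~\ref{smooth}, exactly as in the proof of Lemma~\ref{L1}, but now keeping track of the sharp Sobolev exponents. Write as before $\log g_{\bf a} = \log g_0 + w - \sum_j a_j\log|z|^2$ on $\mathcal U_j$, with $w(s)\in H_2^p(X)$ for $1\le p<\min(1/a_j)$ depending smoothly on $s$ by Theorem~\ref{smooth}. For (iv), recall $\mu(z) = a^z_{;\bar z} = -\partial_{\bar z}\bigl(\tfrac{1}{g_{\bf a}}{g_{\bf a}}_{s\bar z}\bigr)$; since $g_{\bf a}=e^u g_0$ with $u = w + a_j\Psi_j$ and $\Psi_j$ is $s$-independent near the puncture, the mixed derivative ${g_{\bf a}}_{s\bar z}=\partial_s\partial_{\bar z}\log g_{\bf a}$ equals $\partial_s\partial_{\bar z}w$ plus a smooth term, hence lies in $H_1^p$; dividing by the bounded positive function $1/g_{\bf a}$ and differentiating once more in $\bar z$ lands us in $L^p$ after we also check that $1/g_{\bf a}$ has enough Sobolev regularity — this follows since $g_{\bf a}^{-1}=e^{-u}g_0^{-1}=|z|^{2a_j}e^{-w}g_0^{-1}$ and $|z|^{2a_j}\in H_1^p$ near $z=0$ precisely for $p<1/a_j$ — er, more carefully: $|z|^{2a_j}$ is bounded with gradient of size $|z|^{2a_j-1}$, which is in $L^p$ for $p<2/(1-... )$, so the binding constraint turns out to be $p<h_0=\min_j\frac{1}{1-a_j}$, matching the claimed exponent. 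So I would carefully track that the worst term in $\mu$ is the product $(\partial_z\log g_{\bf a})\cdot(\text{bounded})$ or the genuinely singular piece coming from $\partial_{\bar z}(|z|^{2a_j}\cdot(\text{smooth}))\sim |z|^{2a_j}/z$, which is in $L^h$ for $h<h_0$, giving (iv) and (i) (since $|\mu|^2 g_{\bf a}/g_0 = |\varphi|^2/(g_{\bf a}g_0)$ with $\varphi=g_{\bf a}\bar\mu$ holomorphic, and near the puncture $g_{\bf a}\sim |z|^{-2a_j}$, $\varphi$ has at worst a simple pole, so $|\mu|^2 g_{\bf a}\sim |z|^{2a_j}\cdot(\text{bounded})$, which is in $L^h$ for the same range).

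For (ii), I would use the formula $\chi = {g_{\bf a}}_{s\bar s} - \tfrac{1}{g_{\bf a}}{g_{\bf a}}_{s\bar z}\,{g_{\bf a}}_{z\bar s}$. Each of the second-order $s$-and-$z$ derivatives of $\log g_{\bf a}$ is, after subtracting the explicit $\Psi_j$-contribution (which is $s$-independent near the puncture, hence contributes nothing to $s$-derivatives), a second derivative of $w$, so lies in $H_1^p$ or $L^p$ according to how many $z$-derivatives are taken; ${g_{\bf a}}_{s\bar s}=\partial_s\partial_{\bar s}w \in H_2^p$, while $\tfrac1{g_{\bf a}}{g_{\bf a}}_{s\bar z}\in H_2^p$ as well (one $\bar z$-derivative already taken, and the prefactor $1/g_{\bf a}=|z|^{2a_j}\cdot(\text{smooth})$ is multiplicatively nice up to the $1/(1-a_j)$ threshold), and the product of this with ${g_{\bf a}}_{z\bar s}\in H_2^p$ is again in $H_2^q$ for $q$ in the stated range by the multiplicativity of these weighted Sobolev spaces. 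The exponent $q_0 = \min(\min_j 1/a_j,\min_j 1/(1-a_j))$ appears because two competing singularities must both be absorbed: the $|z|^{-2a_j}$ blow-up of $g_{\bf a}$ in denominators (threshold $1/a_j$) and the $|z|^{2a_j}$ factor whose derivatives blow up like $|z|^{2a_j-2}$ (threshold $1/(1-a_j)$). I would verify the two worst monomials explicitly and invoke Hölder/Sobolev multiplication, noting $H_2^q\hookrightarrow C^0$ and is a Banach algebra in the relevant dimension-two range.

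For (iii), the smoothness in $s$ is inherited directly: Theorem~\ref{smooth} gives that $s\mapsto w(s)\in H_2^p(X)$ is $C^\infty$, and all the quantities in (i) and (ii) are obtained from $w$, its $s$- and $\bar s$-derivatives, and fixed smooth data on $X$ (the background metric $g_0$, the $\Psi_j$) by finitely many applications of multiplication, division by the bounded-below $g_{\bf a}$, and differentiation in $z,\bar z$ — all of which are smooth operations between the relevant Banach spaces (again using that the weighted Sobolev spaces in question are Banach algebras and that $g_{\bf a}^{-1}$ depends smoothly on $s$ by the same theorem). Hence the composite maps $s\mapsto |\mu|^2 g_{\bf a}/g_0$ and $s\mapsto\chi$ are $C^\infty$ with values in $L^h(X)$ and $H_2^q(X)$ respectively.

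The main obstacle is bookkeeping the sharp exponents: one must be careful that the \emph{products} of two $H_1^p$ or $H_2^p$ factors, one of which carries the weight $|z|^{2a_j}$ and the other only the $w$-regularity, land in the claimed spaces, and that the division by $g_{\bf a}$ (equivalently multiplication by $|z|^{2a_j}$ times a smooth unit) does not destroy regularity — this is where the two different thresholds $1/a_j$ and $1/(1-a_j)$ must be disentangled and the correct one assigned to $h_0$ versus $q_0$. The argument is local and essentially a weighted-Sobolev calculus exercise, but getting the constants exactly right in all cases, including $1/2\le a_j<1$, requires care.
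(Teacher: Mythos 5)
Your treatment of (i) and (iv) follows the paper's route: both rest on writing $\mu=\ol{\varphi}/g_{\bf a}$ for a quadratic differential $\varphi$ with at most a simple pole at the puncture and on the local form of $g_{\bf a}$ from Remark~\ref{re:grealana}. Note, however, a slip in your asymptotic for (i): since $|\varphi|^2\sim |z|^{-2}$, one gets $|\mu|^2 g_{\bf a}/g_0=|\varphi|^2/(g_{\bf a}g_0)\sim |z|^{-2(1-a_j)}$, not $|z|^{2a_j}\cdot(\text{bounded})$; only the former is consistent with the sharp threshold $h_0=\min_j 1/(1-a_j)$ that you correctly state.

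The genuine gap is in (ii) and in the $\chi$-part of (iii). You propose to read $\chi\in H_2^q$ off the formula $\chi=g_{s\ol s}-\frac{1}{g_{\bf a}}g_{s\ol z}g_{z\ol s}$ by Sobolev multiplication. This fails for two reasons. First, membership in $H_2^q$ requires two fiber derivatives of this expression in $L^q$, and $\pt_z\pt_{\ol z}(g_{s\ol z})$ is a \emph{third}-order fiber derivative of $\log g_{\bf a}$; Theorem~\ref{smooth} controls only $\pt_s\pt_{\ol z}w\in H_1^p$, i.e.\ one further fiber derivative. To proceed one would have to use the hyperbolicity relation $\pt_z\pt_{\ol z}\log g_{\bf a}=g_{\bf a}$, which turns this term into $\pt_s\pt_{\ol z}g_{\bf a}\sim |z|^{-2a_j-1}$ and requires a cancellation analysis against the prefactor $|z|^{2a_j}$ that your sketch does not carry out. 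Second, the multiplication property you invoke is not available in the needed range: for $a_j$ close to $1$ one has $p<1/a_j$ close to $1$, and the na\"{\i}ve H\"older/Sobolev estimate for $\nabla(g_{s\ol z}g_{z\ol s})$ lands in $L^r$ with $r=2p/(4-p)<1$, so the product of two $H_1^p$ factors is not even in $H_1^1$; likewise $H_2^q$ is not a Banach algebra at the endpoint $q=1$ included in the statement. The paper avoids all of this by rewriting \eqref{laplace} as the elliptic equation
\[
-\Delta_{g_0}\chi+\frac{g_{\bf a}}{g_0}\,\chi=\frac{g_{\bf a}}{g_0}\,|\mu|^2,
\]
observing that the potential $g_{\bf a}/g_0$ lies in $L^p$ for $p<\min_j 1/a_j$ while the right-hand side is exactly the quantity from (i), and applying the invertibility result of \cite[Lemma 2.1]{S-T}; the two thresholds combine into $q_0$. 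Part (iii) for $\chi$ then follows from the smooth dependence on parameters of solutions of this elliptic equation (again via Theorem~\ref{smooth}), not from algebraic manipulation of the explicit formula. You should restructure (ii) and (iii) around the PDE satisfied by $\chi$.
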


\begin{proof}
The expression $ \ |\mu|^2 \frac{g_{\bf a}}{g_0} $ in local coordinates
near the puncture $p_j$  behaves like $ \frac{1}{|z|^{2(1-a_j)}}$ because
of Lemma~\ref{le:phiL1}, hence (i) follows. Now we write equation
(\ref{laplace}) as
\[
- \Delta_{g_{0}} \chi + \frac{\ga}{g_0} \chi =  \frac{\ga}{g_0} | \mu|^2.
\]
However, near the puncture $p_j$,  the function  $ \frac{\ga}{g_0}$ is in
$L^p$ for $1 \leq   p < \frac{1}{a_j}$, so by \cite[Lemma 2.1]{S-T}
together with (i), the claim (ii) follows. To prove (iii) we apply Theorem
\ref{smooth} together with the smooth dependence on parameters of the
solution of elliptic equations. In order to see (iv), we express $\mu$ in
terms of a quadratic holomorphic differential and apply
Remark~\ref{re:grealana}.
\end{proof}

\begin{proposition}
For every point $s_0 \in S$, we have:
\[ \left\|\left.{ \frac{\partial  }{\partial s}}\right|_{s_0}\right\|^2_{W\!P,\bf a} =
\int_{\mathcal{X}_{s_0}}  \chi \; dA_{\ga}.
\]
\label{WPint}
\end{proposition}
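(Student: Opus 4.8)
The plan is to integrate the identity \eqref{laplace}, namely $|\mu_{\bf a}|^2 = (-\Delta_{g_{\bf a}} + \mathrm{id})\chi$, over the fiber $\mathcal{X}_{s_0}$ against the hyperbolic area form $dA_{\ga}$, and to argue that the Laplacian term contributes nothing. Since by definition $\|(\partial/\partial s)|_{s_0}\|^2_{WP,\bf a} = \int_{\mathcal{X}_{s_0}} |\mu_{\bf a}|^2 \, dA_{\ga}$ (the Weil-Petersson norm of the harmonic Beltrami differential associated to $\partial/\partial s$ via \eqref{Belt}), the Proposition will follow once we show
\[
\int_{\mathcal{X}_{s_0}} \Delta_{g_{\bf a}}\chi \; dA_{\ga} = 0 .
\]
On a compact surface without boundary this is Stokes' theorem, but here the identity \eqref{laplace} is only established on the complement $X'$ of the punctures, and $\chi$ as well as $g_{\bf a}$ are singular there, so the vanishing of the integral of the Laplacian needs justification.

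The key tool is Lemma~\ref{Sob}: part (ii) gives $\chi \in H_2^q(\mathcal{X}_{s_0})$ for $1 \le q < q_0$, and part (i) gives $|\mu_{\bf a}|^2 g_{\bf a}/g_0 \in L^h$ for $1 \le h < h_0$. The strategy is therefore to rewrite everything relative to the smooth background metric $g_0$: since $dA_{\ga} = (g_{\bf a}/g_0)\, dA_{g_0}$ and $\Delta_{g_{\bf a}} = (g_0/g_{\bf a})\Delta_{g_0}$, we have $\Delta_{g_{\bf a}}\chi \, dA_{\ga} = (\Delta_{g_0}\chi)\, dA_{g_0}$, so it suffices to show $\int_X \Delta_{g_0}\chi \, dA_{g_0} = 0$. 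Now $\Delta_{g_0}\chi$ is the $g_0$-Laplacian of an $H_2^q$ function on the compact manifold $X$ with $q > 1$; by the $L^1$-theory on compact manifolds (or by testing against the constant function $1 \in H_1^{q'}$), the integral of a distributional Laplacian of any $W^{2,q}$ function over a closed manifold vanishes. Concretely, one excises small coordinate disks $\{|z| < \varepsilon\}$ around each puncture, applies Green's formula on the complement, and estimates the boundary terms $\int_{|z|=\varepsilon} \partial_\nu \chi \, d\sigma$ using the regularity $\chi \in H_2^q$: the normal-derivative flux through a circle of radius $\varepsilon$ tends to $0$ as $\varepsilon \to 0$ because $\nabla\chi \in H_1^q \subseteq L^{q'}$ with $q' > 2$, so by Hölder the flux is $O(\varepsilon^{1 - 2/q'}) = o(1)$.

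Finally one must also verify that the Weil-Petersson integral $\int |\mu_{\bf a}|^2 \, dA_{\ga} = \int |\mu_{\bf a}|^2 (g_{\bf a}/g_0)\, dA_{g_0}$ is finite — but this is exactly Lemma~\ref{le:phiL1} (equivalently Lemma~\ref{Sob}(i) with $h \ge 1$), so both sides of the asserted equality are genuine finite numbers and the limiting argument is legitimate. I expect the main obstacle to be the boundary-term estimate near the punctures: one has to be careful that the pairing of $\Delta_{g_0}\chi$ with the constant $1$ really is computed by the naive Green's-formula exhaustion, i.e.\ that no hidden distributional mass sits at the punctures. This is handled precisely by the Sobolev regularity in Lemma~\ref{Sob}(ii), which says $\chi$ extends across the punctures as a $W^{2,q}$ function with $q>1$, leaving no room for a Dirac mass in $\Delta_{g_0}\chi$.
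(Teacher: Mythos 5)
Your argument is correct and follows essentially the same route as the paper: integrate the identity $|\mu_{\bf a}|^2=(-\Delta_{g_{\bf a}}+\mathrm{id})\chi$ over the fiber, observe that $\Delta_{g_{\bf a}}\chi\, dA_{g_{\bf a}}$ is just $\ii\,\partial\bar\partial\chi$, and kill that integral using $\chi\in H^q_2(X)$ with $q>1$ from Lemma~\ref{Sob}(ii). The only cosmetic difference is that the paper invokes density of smooth functions in $H^q_2(X)$ on the compact surface, whereas you excise disks and estimate the boundary flux (where, strictly, one should pick good radii or average over a dyadic annulus rather than apply H\"older directly on a fixed circle); both hinge on the same regularity statement.
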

\begin{proof}
We have \[ \int_X \Delta_{\ga} \chi \; dA_{\ga} = \sqrt{-1} \int_X
\partial \bar{\partial} \chi = 0 \] because $ \chi \in H^q_2(X)$ for some $
q > 1$,  $X$ is compact and the space of smooth functions is dense in
$H^q_2(X)$.  Now by equation (\ref{laplace})
\[
\int_X | \mu|^2 dA_{\ga} = \int_X \chi\; dA_{\ga}.
\]
\end{proof}
Assume now that $S$ is arbitrary and $f:\cX \to S$ a holomorphic family of
weighted punctured Riemann surfaces. We denote by $\omega^{WP}_S$ the real
$(1,1)$-form, which is determined by the \wp inner product of tangent
vectors on $S$: Given a tangent vector $u\in T_{S,s_0}$ we denote by
$\rho_{S,s_0}(u)= \mu_{\bf a}(u) \in H^1(X,\bf a)$ the corresponding
harmonic Beltrami differential in the sense of Theorem~\ref{th:ks}.

At this point, we introduce the notion of {\em fiber integrals} of
differential forms for a holomorphic family $f: \cX \to S$ of compact
complex manifolds of dimension $n$ say. Let $\eta$ be a differential form
of a certain degree $(k+n,k+n)$. Let
$$
\xymatrix{X \times S \ar[r]^\phi \ar[dr]_{pr} & \cX \ar[d]^f \\ & S}
$$
be a differentiable trivialization.  Then
$$
\int_{\cX/S} \eta := \int_{X \times S / S} \phi^* \eta
$$
denotes a differential form of degree $(k,k)$, where the latter integral
is defined in terms of the components of $\phi^* \eta$ which have total
degree $2n$ in fiber direction and degree $2k$ in $S$-direction. The
exterior derivative of a fiber integral can be computed in different ways.
Primarily
$$
d \left( \int_{\cX/S} \eta\right) = \int_{\cX/S}  d \eta.
$$
The latter integral can be evaluated in terms of $\phi$. Since a
differentiable trivialization determines a lift $v$ of tangent vectors
$\pt/\pt x$ of the base, any partial derivative
$$
\frac{\pt}{\pt x} \int_{\cX/S} \eta = \int_{\cX/S} L_v(\eta)
$$
where $L_v$ denotes the Lie derivative of the differential form $\eta$
with respect to $v$. On can verify that this is also true for
differentiable lifts of complex tangent vectors, which need not arise from
differentiable trivializations.

Then
\begin{theorem}\label{th:kaeh}
The fiber integral
\[ \int_{ \mathcal{X}/S } \omega^2_{\mathcal{X}} = \omega^{WP}_S \] equals
the \wp form.
\end{theorem}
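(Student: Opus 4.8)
The plan is to reduce the global identity $\int_{\mathcal X/S}\omega_{\mathcal X}^2 = \omega^{WP}_S$ to the pointwise computation already carried out in Proposition~\ref{WPint}. First I would fix $s_0\in S$ and evaluate both sides on a tangent vector $\partial/\partial s|_{s_0}$ (by polarization it suffices to treat the diagonal). For the left-hand side, I would use the description of fiber integration via a differentiable trivialization $\phi:X\times S\to\mathcal X$ together with the formula $\omega_{\mathcal X}^2 = (\sqrt{-1}/2)^2\,\chi(z,s)\,g_{\bf a}(z,s)\,dz\wedge d\bar z\wedge ds\wedge d\bar s$ from equation~\eqref{omega}. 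Extracting the component of total fiber degree $2$ and $S$-degree $2$, the fiber integral is
\[
\int_{\mathcal X/S}\omega_{\mathcal X}^2 = \left(\int_{\mathcal X_s}\chi\,dA_{g_{\bf a}}\right)\frac{\sqrt{-1}}{2}\,ds\wedge d\bar s .
\]
By Proposition~\ref{WPint} the coefficient equals $\bigl\|\partial/\partial s|_{s_0}\bigr\|^2_{W\!P,\bf a}$, which by definition of $\omega^{WP}_S$ is precisely its value on $\partial/\partial s|_{s_0}$. This gives the identity at $s_0$, and since $s_0$ was arbitrary, everywhere.

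The one point requiring care is that $\omega_{\mathcal X}$ is a priori only a $C^\infty$ form on the complement of the punctures (the sections $\sigma_j$), so fiber integration and the manipulations above must be justified across the singular locus. Here I would invoke Lemma~\ref{Sob}: the integrand $\chi\, g_{\bf a}/g_0$ is locally like $1/|z|^{2(1-a_j)}$ near $p_j$ (since $\chi\in H_2^q$ with $q>1$ and $g_{\bf a}/g_0\sim 1/|z|^{2a_j}$), hence integrable, so $\int_{\mathcal X_s}\chi\,dA_{g_{\bf a}}$ is finite and — by part (iii) of Lemma~\ref{Sob} — depends smoothly on $s$. Thus the fiber integral $\int_{\mathcal X/S}\omega_{\mathcal X}^2$ is a well-defined $C^\infty$ $(1,1)$-form on $S$. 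The same $L^1$-bounds show that removing an $\varepsilon$-neighborhood of each section changes the fiber integral by $O(\varepsilon^{2(1-a_j)})\to 0$, so the value computed on the smooth locus is the correct one.

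The main obstacle, as usual in this circle of ideas, is not the algebra but the regularity bookkeeping near the conical points: one must make sure that the formula $\omega_{\mathcal X}^2 = (\sqrt{-1}/2)^2\chi g_{\bf a}\,dz\wedge d\bar z\wedge ds\wedge d\bar s$, established on $X'\times S$, genuinely computes the fiber integral as a current on all of $\mathcal X$, and that pulling the integral through $d$ (to see that $\int_{\mathcal X/S}\omega_{\mathcal X}^2$ is closed, matching the Kähler claim implicit in the notation $\omega^{WP}_S$) is legitimate despite the singularities. For closedness one can argue that $\omega_{\mathcal X}$ extends to a closed positive current on $\mathcal X$ — its local potential $\log g_{\bf a}$ is, up to the smooth term $\log g_0$ and the pluriharmonic correction $\sum_j a_j\log|z_j|^2$, a bounded plurisubharmonic function — and fiber integration of closed currents commutes with $d$; alternatively one invokes the general principle, recalled in the excerpt, that $d\int_{\mathcal X/S}\eta = \int_{\mathcal X/S}d\eta$ together with $d\,\omega_{\mathcal X}=0$ on $X'$. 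Once these technical points are in place, the theorem is immediate from Proposition~\ref{WPint}.
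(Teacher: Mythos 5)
Your proposal is correct and follows essentially the same route as the paper: both reduce the identity to Proposition~\ref{WPint} together with formula~\eqref{omega}, the only cosmetic difference being that the paper justifies the reduction to a single tangent direction by proving that both the fiber integral and $\omega^{WP}$ commute with base change (hence restrict compatibly to local analytic curves), whereas you extract the relevant coefficient of the fiber integral directly and invoke polarization. Your extra remarks on integrability near the punctures (Lemma~\ref{Sob}) and on closedness are sound but the latter belongs to Theorem~\ref{th:fibint} rather than to this statement.
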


\begin{proof}
Let $\alpha: \wt S \to S$ a holomorphic map of complex manifolds. We
consider $\wt \cX = \cX \times_S \wt S$ and the cartesian diagram
$$
\xymatrix{
\wt\cX \ar[r]^{\wt \alpha} \ar[d]_{\wt f} & \cX \ar[d]^f\\
\wt S \ar[r]^\alpha& S
}
$$
Since the hyperbolic metrics on the fibers $\wt \cX_t$ are just  the
hyperbolic metrics on  the $\cX_{\alpha(t)}$, $t \in \wt S$, the relative
volume form on $\wt \cX \to \wt S$ equals $\wt \alpha^* g$ where $g$
denotes the relative volume form for $\cX \to S$. This implies
$$
\wt \alpha^* \omega_\cX = \wt \alpha^*( \ii \pt\ol\pt \log g) =
\ii \pt\ol\pt \log \wt\alpha^*g = \omega_{\wt \cX}.
$$
Hence the integral in the above Theorem commutes with base change, in
particular with the restriction to local analytic curves.

On the other hand, the \wp Hermitian product i.e.\  the evaluation of
$\omega^{WP}$ at tangent vectors commutes with base change: For $v \in
T_{\wt S, t_0}$ we have $\rho_{\wt S,t_0}(v) = \rho_{S,
\alpha(t_0)}(\alpha_*(v))$. Hence
\begin{gather*}
\omega^{WP}_{\wt S}(v,w)= \langle \rho_{\wt S,t_0}(v),\rho_{\wt
S,t_0}(w)\rangle_{W\!P, \bf a } \hspace{5cm} \\= \langle \rho_{S,
\alpha(t_0)}(\alpha_*(v))  , \rho_{S, \alpha(t_0)}(\alpha_*(w))
\rangle_{W\!P, \bf a } = \omega^{WP}_{S}(\alpha_*(v),\alpha_*(w)),
\end{gather*}
hence
$$
\alpha^*\omega^{WP}_{S}=\omega^{WP}_{\wt S}.
$$

Since both $\omega_{\mathcal{X}}$ and $ \omega^{WP}$ are defined in a
functorial way, it is sufficient to check the case $\dim_{\C}S =1$, which
follows from Proposition \ref{WPint} and Formula (\ref{omega}). \end{proof}

\begin{theorem}\label{th:fibint}
The \wp form is of class $C^\infty$ and $d$-closed on the base of any
holomorphic family. In particular, on the Teichmüller space, $\omega^{WP}$
is a Kähler form.
\end{theorem}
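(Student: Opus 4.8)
The plan is to deduce smoothness and $d$-closedness of $\omega^{WP}_S$ directly from the fiber-integral formula $\omega^{WP}_S = \int_{\mathcal X/S}\omega_{\mathcal X}^2$ established in Theorem~\ref{th:kaeh}, rather than working with the Beltrami differentials themselves. The point is that on the complement of the punctures $\omega_{\mathcal X} = \frac{\sqrt{-1}}{2}\partial_{\mathcal X}\ol\partial_{\mathcal X}\log g_{\bf a}$ is a globally defined, $d$-closed, real $(1,1)$-form of class $C^\infty$ by Theorem~\ref{smooth} (the function $w$, hence $\log g_{\bf a}$ off the punctures, depends smoothly on $s$). So $\omega_{\mathcal X}^2$ is a $d$-closed $C^\infty$ $(2,2)$-form on $\mathcal X' = \mathcal X\setminus\bigcup_j\sigma_j(S)$, and formally $d\int_{\mathcal X/S}\omega_{\mathcal X}^2 = \int_{\mathcal X/S} d\,\omega_{\mathcal X}^2 = 0$. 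Likewise, differentiating the fiber integral under the integral sign via the Lie-derivative formula $\frac{\pt}{\pt x}\int_{\mathcal X/S}\eta = \int_{\mathcal X/S} L_v\eta$ will show $\omega^{WP}_S$ is $C^\infty$.

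The real work is to justify that these manipulations survive the punctures, i.e.\ that the fiber integral is a genuine $C^\infty$ form and that no boundary contribution appears when we pass $d$ through $\int_{\mathcal X/S}$. First I would reduce, as throughout the paper, to $S$ a disk, one puncture, and the special holomorphic trivialization near $\sigma_1$; then by Formula~\eqref{omega} the fiber integral in the one-parameter case is literally the scalar function $s\mapsto\int_{\mathcal X_s}\chi\,dA_{g_{\bf a}}$ times $\frac{\sqrt{-1}}{2}ds\wedge d\bar s$. By Proposition~\ref{WPint} this equals $\|\pt/\pt s\|^2_{WP,\bf a}$, and by Lemma~\ref{Sob}(iii) the assignment $s\mapsto\chi\in H_2^q(\mathcal X_s)$ is $C^\infty$; since $H_2^q\hookrightarrow C^0$ and integration against $dA_{g_{\bf a}}$ is continuous on $H_2^q$, the $WP$-norm is a $C^\infty$ function of $s$. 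Combined with the functoriality/base-change argument already used in the proof of Theorem~\ref{th:kaeh} — a general tangent vector on an arbitrary $S$ is pulled back from such a one-parameter family — this gives that $\omega^{WP}_S$ has $C^\infty$ coefficients on any base.

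For $d$-closedness I would argue that it suffices to check it on a disk $S$, where any real $(1,1)$-form is automatically closed; but since closedness is a local, functorial condition and every point of an arbitrary base lies on the image of a holomorphic map from a polydisk, one reduces to $\dim_{\mathbb C}S = 1$ exactly as in Theorem~\ref{th:kaeh}. (Alternatively, and more robustly, one excises $\epsilon$-neighborhoods $\mathcal U_\epsilon$ of the sections, applies Stokes to the smooth form $\omega_{\mathcal X}^2$ on $\mathcal X\setminus\mathcal U_\epsilon$, and shows the boundary fiber integral over $\pt\mathcal U_\epsilon$ tends to $0$ as $\epsilon\to 0$: this uses the integrability estimates of Lemma~\ref{le:phiL1} and Lemma~\ref{Sob}, namely that $\chi$ and its first derivatives lie in $L^q$, $q>1$, near the puncture, so the codimension-one boundary integrals are $O(\epsilon^{\delta})$ for some $\delta>0$.) The main obstacle is precisely this last vanishing-of-boundary-terms estimate: one must control the behavior of $\partial_{\mathcal X}\ol\partial_{\mathcal X}\log g_{\bf a}$ and its $s$-derivatives near the puncture uniformly, and this is exactly where the hypothesis $0<a_j<1$ and the Sobolev bounds from Theorem~\ref{smooth} and Lemma~\ref{Sob} enter decisively. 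Finally, on $\mathcal T_{\gamma,n}$ the form $\omega^{WP}$ is a real $(1,1)$-form that is $d$-closed and positive definite (positivity being Proposition~\ref{WPint} together with the fact that $\mu_{\bf a}\neq 0$ for effective deformations, by Theorem~\ref{th:ks}), hence a Kähler form.
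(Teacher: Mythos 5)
Your reduction of the $d$-closedness to the one-dimensional case is the genuine gap. For a real $(1,1)$-form $\omega^{WP}_S=\frac{\ii}{2}G_{i\ol\jmath}\,ds^i\wedge ds^{\ol\jmath}$ on an $N$-dimensional base, closedness is the system of identities $\pt_kG_{i\ol\jmath}=\pt_iG_{k\ol\jmath}$ (and its conjugate), which involves two independent holomorphic directions. Pulling back along a holomorphic map $\alpha:\Delta\to S$ from a disk gives $\alpha^*(d\omega^{WP}_S)=d(\alpha^*\omega^{WP}_S)=0$ automatically, because a $3$-form on a real $2$-manifold vanishes \emph{whether or not} $d\omega^{WP}_S=0$. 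So, unlike the identification of two $(1,1)$-forms in Theorem~\ref{th:kaeh} (which is legitimately reduced to curves by polarization, since a Hermitian form is determined by its values $\omega(v,\ol v)$), closedness cannot be tested on one-dimensional slices, and the sentence ``on a disk any real $(1,1)$-form is automatically closed, hence one reduces to $\dim_\C S=1$'' proves nothing for $N\ge 2$. A related, milder instance of the same over-reduction occurs in your smoothness argument: knowing that $s\mapsto\|\pt/\pt s\|^2_{WP}$ is $C^\infty$ along every one-parameter subfamily does not yield joint smoothness of the coefficients $G_{i\ol\jmath}$ on a polydisk; one must use the multi-parameter statements of Theorem~\ref{smooth} and Lemma~\ref{Sob}(iii) directly, which is what the paper does.

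The paper's actual mechanism, which you should adopt, is to work with all coordinates $s^1,\dots,s^N$ at once: the relevant term of $\int_{\cX/S}\omega_\cX^2$ is $\ii\chi_{i\ol\jmath}\,g_{\bf a}\,dA\,ds^i\wedge ds^{\ol\jmath}$, and the map $s\mapsto\chi_{i\ol\jmath}\,g_{\bf a}/g_0$ is $C^\infty$ from $S$ into $L^p(X)$. This gives joint smoothness of $G_{i\ol\jmath}$ and, more importantly, justifies differentiation under the integral sign: $\pt_k\int_X\chi_{i\ol\jmath}g_{\bf a}dA=\int_XF_{i\ol\jmath k}\,g_{\bf a}dA$ for some $F_{i\ol\jmath k}\in L^p$, and since $L^p$-convergence forces a.e.\ pointwise convergence of a subsequence, $F_{i\ol\jmath k}$ must agree a.e.\ with the pointwise derivative of the integrand. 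Hence $d\int_{\cX/S}\omega_\cX^2=\int_{\cX/S}d(\omega_\cX^2)$, and the latter vanishes because $d(\omega_\cX^2)=0$ off the punctures, a set of measure zero. Your alternative Stokes argument with excised $\epsilon$-neighborhoods is in the right spirit, but the step ``$\chi$ and its first derivatives lie in $L^q$, $q>1$, hence the boundary integrals over $\{|z|=\epsilon\}$ are $O(\epsilon^\delta)$'' does not follow: $L^q$-integrability over a two-dimensional neighborhood gives no control of integrals over circles of radius $\epsilon$; one would need pointwise decay estimates of the type $|\chi_{;z}|\lesssim r^{-2a+1}$, which the paper only establishes later (Lemma~\ref{le:chiz}) and under the additional hypothesis $a>1/2$.
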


\begin{proof}
At this point we introduce holomorphic coordinates $s^i$; $i=1,\ldots,N$
on the base space $S$. We consider the horizontal lifts $V_i$ on $\cX$ and
their inner product with respect to $\omega_\cX$
$$
\chi_{i\ol\jmath}= \langle V_i,V_j\rangle
$$
Furthermore
\begin{equation}\label{eq:laplace}
(\Delta_{g_{\bf a}} -id) \chi_{i\ol\jmath}= \mu_i \mu_{\ol \jmath}.
\end{equation}
The relevant term for the fiber integral of $\omega_\cX^2$ is
$$
\ii \chi_{i\ol\jmath}\; g_{\bf a}\; dA \; ds^i\wedge
ds^{\ol\jmath}.
$$

In order to show the Theorem need to prove that
$$
d \int_{\cX/S} \omega_\cX^2 =\int_{\cX/S} d( \omega_\cX^2),
$$
The map $S \to L^p$ , $p$ as above, which sends $s$ to $\chi_{i\ol\jmath}
g_{\bf a}/g_0$ is of class $C^\infty$, because of Theorem~\ref{smooth} and
Lemma~\ref{Sob}. So we apply a differerentiable local trivialization of
the family. Then
$$
\frac{\pt}{\pt s^k}\int_{X} \chi_{i\ol\jmath} g_{\bf a} dA =
\int_{X} F_{i\ol\jmath k} g_{\bf a} dA
$$
for some $F_{i\ol\jmath k} \in L^p(X)$. Since $L^p$-convergence of a
sequence implies pointwise convergence of a subsequence almost everywhere,
the function $F_{i\ol\jmath k}$ has to be the derivative of the integrand
outside a set of measure zero. This argument shows that exterior
derivative on $S$ of the differential form given by the fiber integral of
$\omega_\cX^2$ equals the fiber integral of the exterior derivative of
$\omega_\cX^2$ on the total space $\cX$. The latter form $d(\omega_\cX^2)$
is in $L^p$ and equal to zero outside a set of measure zero, so the
integral is identically zero.
\end{proof}

\section{Determinant line bundles an Quillen metrics in the conical case}\label{quillen}
Let $f:(\cX, \bf a) \to S$ be any holomorphic family of weighted punctured
Riemann surfaces equipped with the family $g_{\bf a}$ of conical metrics,
in particular $f$ may denote the universal such family. In this section we
consider rational weights $a_j \in \Q$. Let $m \in\N$ be a common
denominator. Let
$$
\mathcal L_m = \left( (m(\cK_{\cX/S} + {\bf a}))- (m(\cK_{\cX/S} + {\bf
a}))^{-1}\right)^{\otimes 2}.
$$
be an element of the corresponding Grothendieck group. Denote by
$$
\lambda_{m}= \det f_! \mathcal L_m
$$
the determinant line bundle on $S$. The Hirzebruch-Riemann-Roch Theorem
states that the Chern class of the determinant line bundle equals the
degree $2$ component
$$
c_1(\lambda_{m})= - f_* \left(ch(\mathcal L_m) td(X/S)\right)_{(2)} =
4m^2 f_*\left( c_1^2(\cK_{\cX/S} + \bf a)\right)_{(2)}.
$$
Now we equip the $\Q$-bundle $\cK_{\cX/S}+ \bf a$ with a $C^\infty$
hermitian metric of the form $\wt g^{-1}$ with {\em positive} curvature,
and denote by
$$
\wt \omega_{\cX}= \ii \pt\ol\pt \log \wt g = 2\pi c_1(\cK_{\cX/S}+ {\bf a}, \wt g^{-1})
$$
the Chern form. We denote by $ch(\mathcal L_m, \wt g )$ the induced Chern
character form. Only the term of degree zero contributes to the Todd
character form and the metric on $\cX$ need not be specified.

The theorem of Quillen \cite{qui}, Zograf-Takhtajan \cite{TaZo0} and
Bismut-Gillet-Soulé \cite{bgs} states the existence of a Quillen metric
$h_0^Q$ on $\lambda_{m}$ such that for the type $(1,1)$ components the
following holds.
\begin{eqnarray*}
c_1(\lambda_m, h^Q_0)&=& - \int_{\cX/S}ch(\mathcal L, \wt g
)td(\cX/S)_{(1,1)}\\
 &=& 4m^2\int_{\cX/S} c_1(\cK_{\cX/S}+ {\bf a}, \wt
g^{-1})^2_{(1,1)}\\ & =& 16 m^2 \pi^2 \int_{\cX/S} \wt\omega_\cX^2.
\end{eqnarray*}

\begin{theorem}\label{th:quillen}
Let $f:(\cX, \bf a) \to S$ be the universal holomorphic family of weighted
punctured Riemann surfaces equipped with the family $g_{\bf a}$ of conical
metrics. Let $\omega^{WP}$ be the generalized \wp metric. Then the
determinant line bundle $\lambda_m$ possesses a  hermitian metric $h$ of
class $C^\infty$, whose Chern form is up to a numerical factor equal to
the \wp metric:
$$
c_1(\lambda_m, h) = 16 m^2\pi^2 \omega^{WP}.
$$
The metric $h$ descends to the moduli space.
\end{theorem}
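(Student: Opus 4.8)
The plan is to obtain $h$ from the Quillen metric $h_0^Q$ of the preceding paragraph by multiplying it with an explicit $C^{\infty}$ conformal factor on $S$. This factor will be the secondary (Bott--Chern) form attached to $ch(\mathcal L_m)\,td(\cX/S)$ that measures the passage from the auxiliary smooth metric $\wt g^{-1}$ on $\cK_{\cX/S}+{\bf a}$ to the family $g_{\bf a}$ of hyperbolic conical metrics; since for the present $\mathcal L_m$ this secondary form is, as in the computation before the theorem, a numerical multiple of the transgression of $c_1(\cK_{\cX/S}+{\bf a})^2$, the Chern form of $h$ will by construction be $16m^2\pi^2\int_{\cX/S}\omega_{\cX}^2$, which is $16m^2\pi^2\,\omega^{WP}$ by Theorem~\ref{th:kaeh}.

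Concretely, put $\phi=\log(g_{\bf a}/\wt g)$; near each puncture $p_j$ this differs from the function $w$ of Theorem~\ref{smooth} by a smooth function, so $\phi(s)\in H^p_2(\cX_s)\subseteq C^0(\cX_s)$ for $1<p<\min_j 1/a_j$ and $s\mapsto\phi(s)$ is of class $C^{\infty}$ into $H^p_2(X)$. The transgression of $c_1^2$ along the segment joining the two Chern forms yields (with the appropriate normalisation of $\partial\ol{\partial}$) an identity of currents on $\cX$,
\[
\omega_{\cX}^2-\wt\omega_{\cX}^2 \;=\; \partial_{\cX}\ol{\partial}_{\cX}\bigl(\phi\,(\omega_{\cX}+\wt\omega_{\cX})\bigr),
\]
the $(1,1)$-form $\phi\,(\omega_{\cX}+\wt\omega_{\cX})$ being locally integrable across the punctures by Lemma~\ref{le:phiL1} and Lemma~\ref{Sob}; I would check that the identity holds globally, no boundary contribution arising at the punctures. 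Then I would set $B:=16m^2\pi^2\int_{\cX/S}\phi\,(\omega_{\cX}+\wt\omega_{\cX})$. That $B$ is finite and of class $C^{\infty}$, and that
\[
\ii\,\partial_S\ol{\partial}_S B \;=\; 16m^2\pi^2\Bigl(\int_{\cX/S}\omega_{\cX}^2-\int_{\cX/S}\wt\omega_{\cX}^2\Bigr),
\]
I would prove exactly as in the proofs of Theorem~\ref{th:fibint} and Proposition~\ref{WPint}: by Theorem~\ref{smooth} and Lemma~\ref{Sob} the integrand defines a $C^{\infty}$ map of $S$ into some $L^p$ with $p>1$, and since $L^p$-convergence forces almost everywhere convergence of a subsequence, differentiation may be carried under the fiber integral and commutes with $\partial_S\ol{\partial}_S$. (Alternatively one may approximate $g_{\bf a}$ by smooth metrics $e^{\phi_\epsilon}\wt g$ on $\cK_{\cX/S}+{\bf a}$ with $\phi_\epsilon\to\phi$ in $H^p_2$, apply the anomaly formula of Bismut--Gillet--Soul\'e \cite{bgs} (cf.\ also \cite{TaZo0}) for each $\epsilon$, and let $\epsilon\to0$, using $L^1$-convergence of the curvature forms along the lines of Proposition~\ref{depweight}.)

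I would then define $h:=h_0^Q\,e^{\pm B}$, the sign being chosen so that $c_1(\lambda_m,h)=c_1(\lambda_m,h_0^Q)+16m^2\pi^2\bigl(\int_{\cX/S}\omega_{\cX}^2-\int_{\cX/S}\wt\omega_{\cX}^2\bigr)$. Since $h_0^Q$ is of class $C^{\infty}$ on $S$ by the theorems of Quillen \cite{qui}, Zograf--Takhtajan \cite{TaZo0} and Bismut--Gillet--Soul\'e \cite{bgs}, and $B$ is $C^{\infty}$ by the above, $h$ is a $C^{\infty}$ hermitian metric on $\lambda_m$, and combining the curvature formula recalled before the theorem with Theorem~\ref{th:kaeh} one obtains $c_1(\lambda_m,h)=16m^2\pi^2\int_{\cX/S}\omega_{\cX}^2=16m^2\pi^2\,\omega^{WP}$. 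Moreover the cocycle property of the secondary classes, together with the anomaly formula for two smooth metrics, shows that $h$ does not depend on the auxiliary choice of $\wt g$; since the conical metrics $g_{\bf a}$ are intrinsic to the fibers and the mapping class group $\Gamma_{\gamma,n}$ acts on $\cX\to\cT_{\gamma,n}$ compatibly with the family, this gives $\gamma^*h=h$ for every $\gamma\in\Gamma_{\gamma,n}$, so that $h$ descends to $\mathcal{M}_{\gamma,n}$, as does its Chern form $16m^2\pi^2\,\omega^{WP}$.

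The step I expect to be the main obstacle is the analytic heart of the second paragraph: making sense of the anomaly/transgression formula in spite of the conical singularities, i.e.\ the finiteness and $C^{\infty}$-dependence on $s$ of the secondary fiber integral $B$, and the commutation of $\partial_S\ol{\partial}_S$ with the fiber integral across the punctures. This is the same type of difficulty already resolved in Theorem~\ref{th:fibint}, and it is controlled by the Sobolev regularity supplied by Theorem~\ref{smooth} and Lemma~\ref{Sob} together with the $L^1$-bounds of Lemma~\ref{le:phiL1}.
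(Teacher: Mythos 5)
Your overall strategy --- correcting the Quillen metric $h_0^Q$ by a factor $e^{\pm B}$, where $B$ is the fiber integral of a Bott--Chern transgression of $c_1^2$, justifying the commutation of $\pt_S\ol\pt_S$ with the fiber integral by the $L^p$-arguments of Theorem~\ref{th:fibint}, and descending to the moduli space by modular invariance --- is exactly the one the paper follows. There is, however, a genuine gap in your choice of transgression potential. You set $\phi=\log(g_{\bf a}/\wt g)$ and assert that near each puncture it differs from the function $w$ of Theorem~\ref{smooth} by a smooth function, hence lies in $H^p_2\subseteq C^0$. This is false: by construction $\phi=w-\sum_\nu a_\nu\log|\sigma_\nu|^2$, so $\phi$ has a logarithmic pole $-a_j\log|z|^2$ along each section; it is unbounded and not in $H^p_2$ (already $\pt^2\log|z|^2/\pt z\pt\ol z$ is a Dirac mass). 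The consequence is not cosmetic: with this potential the transgression identity you invoke holds only away from the sections, and as an identity of currents it reads (up to the normalization of $\pt\ol\pt$)
$$
\ii\,\pt\ol\pt\bigl(\phi\,(\omega_{\cX}+\wt\omega_{\cX})\bigr)
=\omega_{\cX}^2-\wt\omega_{\cX}^2-2\pi\sum_\nu a_\nu\,[\sigma_\nu(S)]\wedge(\omega_{\cX}+\wt\omega_{\cX}),
$$
so that after fiber integration $\ii\,\pt_S\ol\pt_S B$ differs from $16m^2\pi^2\bigl(\int_{\cX/S}\omega_\cX^2-\int_{\cX/S}\wt\omega_\cX^2\bigr)$ by a multiple of $\sum_\nu a_\nu\,\sigma_\nu^*(\omega_\cX+\wt\omega_\cX)$, which is a nonvanishing $(1,1)$-form on $S$ (already $\sigma_\nu^*\wt\omega_\cX>0$, since $\wt g^{-1}$ was chosen with positive curvature). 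Equivalently, in your differentiation-under-the-integral argument the components of $\ii\pt\ol\pt(\phi\,\alpha)$ containing a fiber derivative do not integrate to zero over the fibers: Stokes fails across the log pole, and the residues are exactly the divisor terms above. The metric $h=h_0^Q e^{\pm B}$ you construct therefore does not have the asserted Chern form.

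The paper avoids this by transgressing between two singular metrics on $-\cK_{\cX/S}$ having the \emph{same} type of singularity: it writes $g_{\bf a}=\frac{\wt g}{\prod_\nu|\sigma_\nu|^{2a_\nu}}\,e^w$ with $w$ globally defined on $\cX$, and uses the bounded potential $w\in H^p_2\subseteq C^0$ (Theorem~\ref{smooth}) in place of your $\phi$. Since $\ii\pt\ol\pt\log\prod_\nu|\sigma_\nu|^{2a_\nu}$ is supported on the sections, one still has $\ii\pt\ol\pt\bigl(w(\omega_\cX+\wt\omega_\cX)\bigr)=\omega_\cX^2-\wt\omega_\cX^2$ off the punctures, but now no residues arise there, so the fiber integral and $\pt_S\ol\pt_S$ genuinely commute. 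With this single replacement the rest of your argument --- the $L^p$ regularity and $C^\infty$-dependence on $s$ via Theorem~\ref{smooth} and Lemma~\ref{Sob}, and the modular invariance obtained from an invariant choice of $\wt g$ and of the $\sigma_\nu$ --- goes through as you describe and coincides with the paper's proof.
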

Since Hilbert space methods are not available, the notion of an analytic
torsion of Dirac operators is void, in particular there is no Quillen
metric in its original sense.

\begin{proof}
We will use the notation of this section, in particular the metric $\wt g$
on $-(\cK_{\cX/S}+ {\bf a})$. We can chose $\wt g$ invariant under the
\tei modular group. Let $\sigma_\nu$ be the canonical sections of the line
bundles on $\cX$ given by the punctures. These can be chosen as invariant
under the \tei modular group. The quotient
$$
\frac{\wt g}{\Pi_\nu |\sigma_\nu|^{2a_{\nu}}}
$$
is a well-defined relative metric on $\cX$ with poles of fractional order
at the punctures.

In view of Section~\ref{conic} we have
$$
g_{\bf a}=\frac{\wt g}{\Pi_\nu |\sigma_\nu|^{2a_{\nu}}} e^w,
$$
where the function $w$ is globally defined on $\cX$.

Now
$$
\int_{\cX/S} (\omega_{\cX/S}^2 - \wt\omega_{\cX/S}^2) =
\int_{\cX/S} \ii \pt\ol\pt \left(w \cdot(\omega_{\cX/S}+\wt\omega_{\cX/S})\right).
$$
Let the induced relative metric be
$$
\wt\omega_\cX|\cX_s = \wt{\wt g}(z,s) dA.
$$
The assignment
$$
s\mapsto w \cdot\left(\frac{g_{\bf a}}{\wt{\wt g}} + 1 \right)
\wt{\wt g}
$$
defines a $C^{\infty}$ map $S \to L^p$. Now the argument of
the proof of Theorem~\ref{th:kaeh} applies literally, and
$$
\int_{\cX/S} \ii \pt\ol\pt \left(w \cdot(\omega_{\cX/S}+\wt\omega_{\cX/S})\right) =
\ii \pt\ol\pt \int_{\cX/S} \left(w \cdot(\omega_{\cX/S}+\wt\omega_{\cX/S})\right),
$$
where the integral on the right-hand side defines a $C^\infty$ function on
$S$, which is invariant under the \tei modular group.
\end{proof}

\section{Curvature of the Weil-Petersson metric}
In the classical case the Ricci and holomorphic sectional curvatures of
the classical \wp metric were proven to be negative by Ahlfors in
\cite{Ah}. Royden conjectured the precise upper bound for the holomorphic
sectional curvature in \cite{Roy}. The curvature tensor of the \wp metric
for \tei spaces of compact (or punctured) Riemann surfaces was computed
explicitly by Tromba \cite{Tr} and Wolpert \cite{Wo}. In this section we
show the analogous result for the weighted punctured case. Our methods are
different and originate from the higher dimensional case treated in
\cite{Si,Sch}.

We will first explain the approach and notation in the compact case. Let
$f:\cX \to S$ stand for the universal family, and let again $(z,s)$ be
local holomorphic coordinates on $\cX$ with $f(z,s)=s$, where $s^i;
i=1,\ldots, N$ are holomorphic coordinates on $S$. We denote the
coefficients of $\omega_\cX$ by $g(z,s)=g_{z \ol z}(z,s)$, $g_{z
\ol\jmath}$, and $g_{i \ol \jmath}$ resp.\ (cf.\ \eqref{eq:gzz},
\eqref{eq:gsz}, \eqref{eq:gzs}, and \eqref{eq:gss}).

We use the notation of Kähler geometry. Accordingly the Christoffel
symbols are
$$
\Gamma=\Gamma^z_{zz}= \frac{\pt \log g}{\pt z }
$$
and
$$
\Gamma^{\ol z}_{\ol z \ol z}=\ol\Gamma.
$$
The curvature tensor is
$$
R^{z}_{\; z \ol z z}= - g_{z \ol z}.
$$

Our computations require covariant derivatives with respect to the
hyperbolic metrics $g=g(z,s)$ on the fibers $\cX_s$, whereas we can use
ordinary derivatives for parameters. We use the semi-colon notation of the
derivative of any tensor $b$ for both:
$$
\nabla_z b =b_{;z},
$$
and
$$
\nabla_i b= \pt_i b = b_{;i},
$$
where the index $i$ stands for the coordinate $s^i$ so that $\pt_i =
\pt/\pt s^i$.

Let the tangent vectors $(\pt/\pt s^i)|_s$ correspond to harmonic Beltrami
differentials
$$
\mu_i = \mu^z_{i\, \ol z} \pt_z \ol{dz}
$$
with $\mu_{\ol\jmath} = \ol{\mu_j}$

Now the \wp form in coordinates $s^i$ equals
$$
\omega^{WP}_S = \frac{\ii}{2} G_{i \ol\jmath}(s) ds^i \wedge ds^{\ol \jmath},
$$
where
$$
G_{i \ol\jmath}(s) = \langle \mu_i   , \mu_j  \rangle = \int_{\cX_s}\mu_i \mu_{\ol\jmath} g dA.
$$
Like in Lemma~\ref{le:hori} and Proposition~\ref{pr:harmbel} we use the
horizontal lifts
$$
V_i = \pt_i +a_i^z \pt_z,
$$
We set $V_{\ol \jmath}= \ol{V_j}$ and $a_{\ol \jmath}= \ol{a_j}$, i.e.\
$a^{\ol z}_{\ol j} = \ol{a^z_k}$. We have
\begin{equation}\label{eq:defmu}
\mu_i = a^z_{i
;\ol z}\; \pt_z \ol{dz}.
\end{equation}
In order to compute derivatives $\pt_k$ say of the coefficients
$G_{i\ol\jmath}$, in principle we need a differential trivialization of
the family. Instead one can apply the Lie derivative $L_{W_k}$ with
respect to a differentiable lift $W_k$ of the tangent vector $\pt/\pt s^k$
to the integrand. In this way the Lie derivative of the integrand can be
separated into tensors. Also (because of the symmetry of the Christoffel
symbols) we can use covariant derivatives for the computation of Lie
derivatives. As usual, the metric tensor defines a transition from
contravariant to covariant tensors.

As differentiable lifts we take the horizontal lifts $V_k$ described
above. Observe that Lie derivatives are not type preserving.

We will need the following identities.
\begin{eqnarray}
L_{V_k}(g\; dA )&=&0 \label{eq:lvg} \\
\chi_{i \ol \jmath} := \langle V_i, V_j\rangle_{\omega_\cX}
&=& g_{i \ol \jmath} - a^z_i a^{\ol z}_{\jmath} g_{z \ol z} \label{eq:chi}\\
L_{V_k}(\mu_{\ol \jmath}) &=& - (\chi_{k \ol \jmath})^{;\ol z}_{\; ;z} \pt_{\ol z} dz \label{eq:lvmu}\\ && -
(\mu_k)^z_{\; \ol z} (\mu_{\ol \jmath})^{\ol z}_{\; z} \pt_z dz
+ (\mu_k)^z_{\ol z} (\mu_{\ol \jmath})^{\ol z}_{\; z} \pt_{\ol z}\ol{dz}\nonumber
\end{eqnarray}
\begin{proof}
We show \eqref{eq:lvg} and compute the $(z,\ol z)$-component of the Lie
derivative.
$$
(L_{V_k} g_{z\ol z})_{z \ol z}= [\pt_k + a_k^z \pt_z, g_{z \ol z}] = g_{z \ol z;k} +
a^z_k g_{z\ol z;z} + a^z_{k z} g_{z \ol z}= g_{k \ol z; z } + a_{k \ol z;z}=0
$$
The inner product of horizontal lifts in \eqref{eq:chi} with respect to
$\omega_\cX$ was already evaluated for $\dim S=1$ above. Equation
\eqref{eq:lvmu} follows from the \eqref{eq:defmu} and \eqref{eq:chi}.
\end{proof}

\begin{proposition}\label{pr:dGij} For all $s\in S$
\begin{equation}\label{eq:dGij}
\pt_k G_{i\ol\jmath}(s)=\int_{\cX_s} L_{V_k}(\mu_i) \mu_{\ol\jmath} \,g\, dA
\end{equation}
holds.
\end{proposition}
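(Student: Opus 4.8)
The plan is to differentiate the defining formula $G_{i\ol\jmath}(s)=\int_{\cX_s}\mu_i\mu_{\ol\jmath}\,g\,dA$ under the integral sign and to identify the resulting integrand as the Lie derivative along the horizontal lift $V_k$. First I would note that, because the metric and Beltrami coefficients depend on $s$ in a $C^\infty$ way in the appropriate Sobolev/$L^p$ sense (Theorem~\ref{smooth} and Lemma~\ref{Sob}), the function $s\mapsto \mu_i\mu_{\ol\jmath}\,g_{\bf a}/g_0$ is a $C^\infty$ map from $S$ into $L^p(X)$ for some $p>1$ after fixing a differentiable trivialization; this legitimizes differentiation under the integral, exactly as in the proof of Theorem~\ref{th:fibint}. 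Thus $\pt_k G_{i\ol\jmath}$ is computed by applying $\pt_k$ to the integrand on the trivialized family.

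Next I would replace the coordinate derivative $\pt_k$ acting on the integrand by the Lie derivative $L_{V_k}$ along the horizontal lift $V_k=\pt_k+a_k^z\pt_z$. The point is that the integrand $\mu_i\mu_{\ol\jmath}\,g\,dA$ is a density on the fiber (a form of top fiber-degree times tensor factors), so differentiating the fiber integral with respect to $s^k$ amounts to integrating the Lie derivative of that integrand along any lift of $\pt/\pt s^k$; this is the general principle recorded in the discussion of fiber integrals preceding Theorem~\ref{th:kaeh}. Using the horizontal lift is the convenient choice here. Then I expand $L_{V_k}(\mu_i\mu_{\ol\jmath}\,g\,dA)$ by the Leibniz rule into $L_{V_k}(\mu_i)\,\mu_{\ol\jmath}\,g\,dA + \mu_i\,L_{V_k}(\mu_{\ol\jmath})\,g\,dA + \mu_i\mu_{\ol\jmath}\,L_{V_k}(g\,dA)$. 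By identity \eqref{eq:lvg} the last term vanishes.

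It remains to show that the middle term $\int_{\cX_s}\mu_i\,L_{V_k}(\mu_{\ol\jmath})\,g\,dA$ vanishes. Here I would use the explicit formula \eqref{eq:lvmu} for $L_{V_k}(\mu_{\ol\jmath})$: its three terms are (up to raising/lowering indices with $g$) $-(\chi_{k\ol\jmath})^{;\ol z}_{\;;z}$, a term quadratic in $\mu_k$ and $\mu_{\ol\jmath}$ of type $\pt_z\,dz$, and one of type $\pt_{\ol z}\,\ol{dz}$. When paired against $\mu_i=\mu^z_{i\,\ol z}\pt_z\ol{dz}$ and integrated against the area form, only the component of matching type survives, namely the $-(\chi_{k\ol\jmath})^{;\ol z}_{\;;z}$ piece; the quadratic terms contribute nothing because of the type mismatch in the contraction. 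Then $\int_{\cX_s}\mu^z_{i\,\ol z}\,\bigl(-(\chi_{k\ol\jmath})^{;\ol z}_{\;;z}\bigr)\,g\,dA$ is handled by integration by parts on the compact fiber: moving the $\nabla_z$ off $\chi_{k\ol\jmath}$ onto $\mu_i$ produces $\mu^z_{i\,\ol z;z}$ contracted with $(\chi_{k\ol\jmath})^{;\ol z}$, and since $g\,\ol{\mu_i}=\ol{\varphi_i}$ is (anti-)holomorphic one gets $\mu^z_{i\,\ol z;z}$ related to a holomorphic quadratic differential whose relevant covariant derivative vanishes — equivalently $\mu_i$ being harmonic means $\mu^z_{i\,\ol z;\ol z}$ and the divergence term both die. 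The main obstacle is the justification of integration by parts and of differentiation under the integral sign in the presence of the conical singularities: one must check that $\chi_{k\ol\jmath}$ and its second covariant derivatives, together with $\mu_i$, lie in function spaces good enough near the punctures that no boundary contributions appear, which is precisely what Lemma~\ref{Sob} (parts (ii), (iii), (iv)) was set up to provide, combined with the density of smooth functions in the relevant Sobolev spaces on the compact surface $X$ as used in Proposition~\ref{WPint}.
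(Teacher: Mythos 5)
Your proposal is correct and follows essentially the same route as the paper: differentiate the fiber integral via the Lie derivative along the horizontal lift $V_k$, kill the volume term by \eqref{eq:lvg}, discard the two type-mismatched components of \eqref{eq:lvmu} in the pairing with $\mu_i$, and eliminate the remaining cross term by partial integration and the harmonicity identity ${\mu_i}^z_{\;\ol z;z}=0$. The extra justifications you flag (differentiation under the integral via the $C^\infty$ map $s\mapsto \mu_i\mu_{\ol\jmath}g_{\bf a}/g_0$ into $L^p$, and the behaviour of $\chi_{k\ol\jmath}$ near the punctures) are exactly what the paper supplies when it revisits this Proposition in the conical case.
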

When evaluating \eqref{eq:dGij}, only the first component of
\eqref{eq:lvmu} gives a contribution in the pairing with $\mu_{\ol
\jmath}$.
\begin{proof}
We compute $L_{V_k}(\mu_i \mu_{\ol \jmath}\, g \, dA)$ using
\eqref{eq:lvg}. Now by partial integration (for all $s\in S$):
\begin{equation}\label{eq:lvkj}
\int_{\cX_s} \mu_i L_{V_k}(\mu_{\ol\jmath}) \,g\, dA=- \int_{\cX_s}
{\mu_i}^z_{\; \ol z} {\chi_{i\ol\jmath}}_{ \; ;z}^{;\ol z} \,g\, dA
= \int_{\cX_s}
{\mu_i}^z_{\; \ol z; z} {\chi_{i\ol\jmath}}^{;\ol z} \,g\, dA
=0.
\end{equation}
In the last step we used the harmonicity of $\mu_i$ in the form
\begin{equation}\label{eq:muharm}
  {\mu_i}^z_{\; \ol z; z} = 0.
\end{equation}
\end{proof}
\begin{lemma}\label{le:sym}
\begin{equation}\label{eq:sym}
L_{V_k}(\mu_i)^z_{\; \ol z} = L_{V_i}(\mu_k)^z_{\; \ol z}
\end{equation}
\end{lemma}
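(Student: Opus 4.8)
The plan is to reduce the identity \eqref{eq:sym} to the vanishing of the Lie bracket $[V_k,V_i]$, and then to obtain that vanishing directly from the hyperbolicity equation \eqref{eq:hy}.

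First I would compute $L_{V_k}(\mu_i)$ in the local coordinates $(z,s)$. Since $g_{\bf a}$ is K\"ahler on the fiber we have $\Gamma^z_{\ol z z}=0$, so by \eqref{eq:defmu} the coefficient of $\mu_i$ is $(\mu_i)^z_{\;\ol z}=a^z_{i;\ol z}=\pt_{\ol z}a^z_i$. As $V_k=\pt_k+a^z_k\pt_z$ is a $(1,0)$ vector field with $\iota_{V_k}\ol{dz}=0$ and $[V_k,\pt_z]=-(\pt_z a^z_k)\pt_z$, the Lie derivative $L_{V_k}(\mu_i)$ is again of type $\pt_z\otimes\ol{dz}$, and
\[
L_{V_k}(\mu_i)^z_{\;\ol z}=\pt_k\pt_{\ol z}a^z_i+a^z_k\,\pt_z\pt_{\ol z}a^z_i-(\pt_z a^z_k)(\pt_{\ol z}a^z_i).
\]
A short regrouping shows that subtracting from this the expression obtained by exchanging $i$ and $k$ yields $\pt_{\ol z}\bigl([V_k,V_i]^z\bigr)$, where $[V_k,V_i]=[V_k,V_i]^z\,\pt_z$ is a \emph{vertical} vector field (its base component is $[\pt/\pt s^k,\pt/\pt s^i]=0$). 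Hence it suffices to prove $[V_k,V_i]\equiv 0$ on $X\setminus\{p_1,\ldots,p_n\}$.

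For this I would insert $a^z_i=-g_{\bf a}^{-1}(g_{\bf a})_{i\ol z}=-e^{-\psi}\psi_{i\ol z}$, with $\psi:=\log g_{\bf a}$, into the bracket
\[
[V_k,V_i]^z=\pt_k a^z_i-\pt_i a^z_k+a^z_k\,\pt_z a^z_i-a^z_i\,\pt_z a^z_k ,
\]
and use \eqref{eq:hy}, i.e.\ $\psi_{z\ol z}=e^{\psi}$, in the form $\psi_{iz\ol z}=\pt_i e^{\psi}=e^{\psi}\psi_i$ (this is the K\"ahler identity for the global potential $\psi$). One then finds $\pt_z a^z_i=e^{-\psi}\psi_z\psi_{i\ol z}-\psi_i$, so that $\pt_k a^z_i-\pt_i a^z_k=e^{-\psi}(\psi_k\psi_{i\ol z}-\psi_i\psi_{k\ol z})$ while $a^z_k\pt_z a^z_i-a^z_i\pt_z a^z_k=e^{-\psi}(\psi_i\psi_{k\ol z}-\psi_k\psi_{i\ol z})$; the two contributions cancel, so $[V_k,V_i]^z\equiv 0$, and \eqref{eq:sym} follows. (Alternatively one may quote the general fact that for a holomorphic family carrying fiberwise K\"ahler--Einstein metrics of constant negative curvature the $(1,0)$ horizontal lifts have pairwise vanishing brackets, cf.\ \cite{Si,Sch}.)

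The computation is short; the points needing care are the bookkeeping ones --- checking that $L_{V_k}(\mu_i)$ really has no components beyond the $\pt_z\otimes\ol{dz}$ one, and making sure each identity is used only on the complement of the punctures, where $g_{\bf a}$ is $C^{\infty}$, positive and satisfies \eqref{eq:hy}. This is exactly the region on which the preceding identities of this section were established, and no growth estimate near the punctures is needed, since \eqref{eq:sym} is the pointwise equality of the two coefficients away from the punctures.
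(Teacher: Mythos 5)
Your proof is correct, and it is the "direct computation" the paper alludes to, organized efficiently: the antisymmetrized Lie derivative reduces to $\pt_{\ol z}\bigl([V_k,V_i]^z\bigr)$, and the bracket of the two $(1,0)$-horizontal lifts vanishes identically on the complement of the punctures because the hyperbolicity equation $\psi_{z\ol z}=e^{\psi}$ (with $\psi=\log g_{\bf a}$) makes the two contributions $e^{-\psi}(\psi_k\psi_{i\ol z}-\psi_i\psi_{k\ol z})$ and $e^{-\psi}(\psi_i\psi_{k\ol z}-\psi_k\psi_{i\ol z})$ cancel. Your bookkeeping remarks are also the right ones: $L_{V_k}\ol{dz}=0$ and $L_{V_k}\pt_z=-(\pt_z a^z_k)\pt_z$ keep $L_{V_k}(\mu_i)$ of pure type $\pt_z\otimes\ol{dz}$, and the identity is only needed pointwise away from the punctures, where Theorem~\ref{smooth} justifies differentiating the hyperbolicity equation in $s$.
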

The {\em proof} is a direct computation.

We see that Lemma~\ref{le:sym} together with Proposition~\ref{pr:dGij}
also implies the Kähler property.

\begin{lemma}\label{le:lvkij}
The Lie derivatives
$$
L_{V_k}(\mu_i)=L_{V_k}(\mu_i)^z_{\; \ol z} \pt_z \ol{dz}
$$
of the harmonic Beltrami differentials are again harmonic Beltrami
differentials.
\end{lemma}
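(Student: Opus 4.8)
The plan is to show that $L_{V_k}(\mu_i)$, which \emph{a priori} is only a differentiable $(-1,0)$-type tensor field on the punctured fiber (the first component in \eqref{eq:lvmu}), is in fact a harmonic Beltrami differential in the sense of Definition~\ref{harmbelt}. By that definition and the preceding discussion, this amounts to two things: first, that $g_{\bf a}\cdot\overline{L_{V_k}(\mu_i)^z_{\;\ol z}}$ is a holomorphic quadratic differential on $X'$ with at worst simple poles at the punctures, i.e.\ lies in $H^0(X,\Omega^2_{(X,{\bf a})})$; and second, that the corresponding Beltrami differential is in $L^1$, so that the pairing of Lemma~\ref{le:phiL1} makes sense and Proposition~3.10 applies. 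First I would record that, on the complement of the punctures, $L_{V_k}(\mu_i)^z_{\;\ol z} = -(\chi_{k\ol\jmath})^{;\ol z}_{\;;z}$ raised back to a $(-1,0)$-tensor; the holomorphicity of $g_{\bf a}\,\overline{L_{V_k}(\mu_i)}$ then follows from the same covariant-derivative identities used to prove \eqref{laplace}: differentiating $(\Delta_{g_{\bf a}}-\mathrm{id})\chi_{k\ol\jmath}=\mu_k\mu_{\ol\jmath}$ and commuting covariant derivatives with the curvature tensor $R^z_{\;zz\ol z}=-g_{\bf a}$, exactly as in the computation following Proposition~3.7. So on $X'$ the statement is a formal consequence of what is already established.

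The real content is the behavior at the punctures. Here I would invoke Lemma~\ref{Sob}: we know $\chi=\chi_{i\ol\jmath}\in H_2^q(X)$ for $1\le q<q_0$, and more precisely $\chi_{i\ol\jmath}g_{\bf a}/g_0\in H_2^q$, together with the $C^\infty$-dependence on $s$. Two covariant differentiations of $\chi_{k\ol\jmath}$ drop this into $L^q$, and combined with Remark~\ref{re:grealana} (the real-analytic structure of $\rho$ in $|z|^{2(1-a_j)}$) and the explicit local form of $g_{\bf a}$, one reads off that $L_{V_k}(\mu_i)^z_{\;\ol z}$ behaves near $p_j$ like $1/z$ times a bounded factor — the same $L^q$-for-$q<2$ estimate that appeared in the proof of Lemma~\ref{le:phiL1}. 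Consequently $g_{\bf a}\,\overline{L_{V_k}(\mu_i)}$, being holomorphic on $X'$ and locally bounded by $|z|^{-2a_j}\cdot|z|\cdot(\text{bounded})$, extends across $p_j$ with at most a simple pole; hence it lies in $H^0(X,\Omega^2_{(X,{\bf a})})$. The $L^1$-integrability of the resulting Beltrami differential follows by the same Sobolev-embedding bookkeeping as in Lemma~\ref{le:phiL1}, split according to whether $a_j\le 1/2$ or $1/2<a_j<1$.

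The main obstacle I expect is precisely this last regularity-at-the-puncture step: one must be careful that taking Lie derivatives with horizontal lifts $V_k$ — which involve the coefficients $a_k^z$ that themselves are only of class $H_1^p$, not smooth, near the punctures (Lemma~\ref{Sob}(iv)) — does not worsen the pole order beyond what a simple pole allows. The clean way around this is to work not with the Lie-derivative expression directly but with the equivalent covariant expression $-(\chi_{k\ol\jmath})^{;\ol z}_{\;;z}$ and to exploit that $\chi_{k\ol\jmath}\in H_2^q$ with the real-analytic structure of Remark~\ref{re:grealana}; this converts the problem into counting derivatives of a function with controlled Sobolev regularity, which is exactly the calculation already carried out for $w$ in Lemma~\ref{le:phiL1}. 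Once the local pole order is pinned down, holomorphicity on $X'$ plus Riemann's removable-singularity theorem for $g_{\bf a}\,\overline{L_{V_k}(\mu_i)}\cdot z$ finishes the argument, and $L_{V_k}(\mu_i)\in H^1(X,{\bf a})$ as claimed.
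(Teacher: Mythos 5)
There is a genuine gap, and it comes right at the start: you identify $L_{V_k}(\mu_i)^z_{\;\ol z}$ with $-(\chi_{k\ol\jmath})^{;\ol z}_{\;;z}$, i.e.\ with the first component of \eqref{eq:lvmu}. But \eqref{eq:lvmu} is a formula for $L_{V_k}(\mu_{\ol \jmath})$, the Lie derivative of the \emph{conjugate} Beltrami differential, whose principal part is of type $\pt_{\ol z}\otimes dz$; it is not a formula for $L_{V_k}(\mu_i)$, which is of type $\pt_z\otimes \ol{dz}$ and is \emph{not} expressible through second covariant derivatives of $\chi$. The correct local expression (used in the proof of the conical analogue, Lemma~\ref{le:lvkijconic}) is
$$
g\cdot L_{V_k}(\mu_i)^z_{\;\ol z}
= -\,\pt_{\ol z}(\log g)\,\pt_k g_{i\ol z} + \pt_{\ol z}\pt_k g_{i\ol z} - g_{i\ol z}\,g_{k\ol z},
$$
built from $a^z_i=-g^{-1}g_{i\ol z}$ and its $\pt_k$-derivative. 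Since your entire argument --- both the ``formal holomorphicity on $X'$'' step and the pole-order bookkeeping at the punctures --- is carried out for the wrong tensor, it does not establish the lemma. What actually has to be shown is the single covariant identity $\nabla_z L_{V_k}(\mu_i)=0$, which the paper obtains by a direct computation (the analogue of $\ol\pt^{\,*}L_{V_k}(\mu_i)=0$ in \cite{Sch-3}); this does not follow from differentiating $(\Delta-\mathrm{id})\chi_{k\ol\jmath}=\mu_k\mu_{\ol\jmath}$, and indeed the first component of \eqref{eq:lvmu} taken alone is not harmonic.

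A secondary point: this lemma sits in the part of the section devoted to the \emph{compact} case, where there are no punctures and harmonicity is exactly the condition $\nabla_z L_{V_k}(\mu_i)=0$; the regularity-at-the-puncture analysis you devote most of your effort to belongs to the separate conical statement (Lemma~\ref{le:lvkijconic}). Even there, the paper does not estimate two covariant derivatives of $\chi$; it shows the three terms in the displayed expansion of $g\cdot L_{V_k}(\mu_i)^z_{\;\ol z}$ are each in $L^1$, using $g_{i\ol z}, g_{k\ol z}\in H^p_1\subset L^2$ and the estimates from Lemma~\ref{le:phiL1}, so that the anti-holomorphic quadratic differential $g\cdot\ol{L_{V_k}(\mu_i)}$ has at most a simple pole. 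If you redo your proof starting from the correct formula for $L_{V_k}(\mu_i)$, that is the route to follow.
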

\begin{proof} We have
\begin{equation*}
\nabla_z  L_{V_k}(\mu_i)=0.
\end{equation*}
Its formal proof corresponds to $\ol\pt^* L_{V_k}(\mu_i)=0$ in
\cite{Sch-3}. The computation is straightforward.
\end{proof}

It is convenient to use normal coordinates of the second kind for the
components of the \wp tensor at a given point $s_0\in S$. Because the
$\mu_i$ span the space of harmonic Beltrami differentials (for $s=s_0$)
the condition
$$
\pt_kG_{i\ol \jmath}(s_0)= 0
$$
by Proposition~\ref{pr:dGij} is equivalent to saying that all derivatives
$L_{V_k}(\mu_i)$ vanish at $s=s_0$ identically.

We compute the second derivative at the given point $s_0$. By
\eqref{eq:dGij}
\begin{equation}\label{eq:dklGij}
\pt_{\ol\ell}\pt_k G_{i\ol\jmath} =
\int_{\cX_{s_0}} L_{V_\ol\ell}L_{V_k}(\mu_i) \mu_{\ol \jmath} \,g \, dA +
\int_{\cX_{s_0}}L_{V_k}(\mu_i)L_{V_\ol\ell}(\mu_{\ol \jmath})\,g \, dA.
\end{equation}
\begin{lemma}\label{le:brac}
\begin{eqnarray}
[V_\ol{\ell},V_k] &=& - \chi_{k\ol \ell}^{\; ;z} \pt_z + \chi_{k\ol\ell}^{\; ;\ol
z}\pt_{\ol z}\label{eq:brac}\\
\int_{\cX_s}L_{[V_\ol{\ell},V_k]}(\mu_i)\mu_{\ol \jmath}\, g \, dA &=&
-\int_{\cX_s} \Delta(\chi_{k\ol \ell})\mu_i\mu_{\ol \jmath}\, g \, dA \label{eq:brac2}
\end{eqnarray}
\end{lemma}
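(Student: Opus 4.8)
The plan is to prove the two identities of Lemma~\ref{le:brac} by direct computation, treating each separately but using the first to obtain the second.

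\textbf{Equation \eqref{eq:brac}.} I would start from the explicit formula $V_k = \pt_k + a^z_k\pt_z$ and $V_{\ol\ell}=\pt_{\ol\ell}+a^{\ol z}_{\ol\ell}\pt_{\ol z}$, and compute the Lie bracket of vector fields on the total space $\cX$. Expanding $[V_{\ol\ell},V_k]$ one gets terms $\pt_{\ol\ell}(a^z_k)\pt_z - \pt_k(a^{\ol z}_{\ol\ell})\pt_{\ol z}$ together with $a^{\ol z}_{\ol\ell}\pt_{\ol z}(a^z_k)\pt_z - a^z_k\pt_z(a^{\ol z}_{\ol\ell})\pt_{\ol z}$; the $\pt_{\ol\ell}$-- and $\pt_k$--components cancel because the $a$'s are the horizontal-lift coefficients. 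The point is then to recognize the coefficient of $\pt_z$ as $-\chi_{k\ol\ell}^{\;;z}$. This should come out of \eqref{eq:chi}: differentiating $\chi_{k\ol\ell}=g_{k\ol\ell}-a^z_k a^{\ol z}_{\ol\ell}g_{z\ol z}$ covariantly in $z$ and using the Kähler identities $g_{k\ol\ell;z}=g_{z\ol\ell;k}$ (from $(\log g)_{;k\ol\ell z}=(\log g)_{;z\ol\ell k}$), the relation $a^z_{k;z}=\mu_k$, and $g_{z\ol z;k}=0$, one collects everything into the horizontal combination appearing in the bracket. By conjugation symmetry the $\pt_{\ol z}$--coefficient is $+\chi_{k\ol\ell}^{\;;\ol z}$, with the sign as stated. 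I expect this to be the more delicate of the two computations, since it requires carefully matching the raw bracket expansion against the covariant derivative of $\chi_{k\ol\ell}$; the bookkeeping of indices and the use of $R_{z\ol z}=-g$ in converting $a^z_{k;z\ol z}$ etc.\ is where errors could creep in.

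\textbf{Equation \eqref{eq:brac2}.} With \eqref{eq:brac} in hand, I would write $L_{[V_{\ol\ell},V_k]}(\mu_i)$ as the Lie derivative along the \emph{fiberwise} vector field $-\chi_{k\ol\ell}^{\;;z}\pt_z+\chi_{k\ol\ell}^{\;;\ol z}\pt_{\ol z}$ and pair it with $\mu_{\ol\jmath}\,g\,dA$. Since $\mu_i = (\mu_i)^z_{\;\ol z}\pt_z\,\ol{dz}$ and $\mu_{\ol\jmath}$ is of complementary type, the pairing $\mu_i\mu_{\ol\jmath}\,g\,dA$ is a fiber volume form, and $L_X$ of it integrates against $\mathrm{div}$-type terms. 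The strategy is to move derivatives off $\chi_{k\ol\ell}$ by partial integration on the compact fiber $\cX_s$, using harmonicity of $\mu_i$ and $\mu_{\ol\jmath}$ (i.e.\ $(\mu_i)^z_{\;\ol z;z}=0$ and $(\mu_{\ol\jmath})^{\ol z}_{\;z;\ol z}=0$, as in \eqref{eq:muharm}) to kill the terms where the derivative lands on the $\mu$'s. What survives is the term where both a $\pt_z$ from one piece and a $\pt_{\ol z}$ from the other act on $\chi_{k\ol\ell}$, assembling into $\chi_{k\ol\ell}^{\;;z}{}_{;\ol z}+\chi_{k\ol\ell}^{\;;\ol z}{}_{;z}$; using that $\chi_{k\ol\ell}$ is a scalar function and the commutation of covariant derivatives on functions, this is $2$ copies of the relevant term, and with the $\tfrac12$ implicit in $\Delta_{g}=g^{-1}\pt_z\pt_{\ol z}$ being $\tfrac12(\nabla^z\nabla_z+\nabla^{\ol z}\nabla_{\ol z})$ — or rather just noting $\Delta\chi_{k\ol\ell}=\chi_{k\ol\ell;z}^{\;\;;z}$ — one lands exactly on $-\int_{\cX_s}\Delta(\chi_{k\ol\ell})\mu_i\mu_{\ol\jmath}\,g\,dA$.

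\textbf{Convergence/regularity caveat.} I should note that all integrations by parts are on the open surface $\cX_s\setminus\{\text{punctures}\}$, so strictly one must justify vanishing of boundary terms near the punctures. By Lemma~\ref{Sob} the function $\chi_{k\ol\ell}$ lies in $H^q_2$ for some $q>1$ and $|\mu|^2 g_{\bf a}/g_0\in L^h$ for some $h>1$, and $\mu_i$ is given by a holomorphic quadratic differential divided by $g_{\bf a}$ with the regularity of Remark~\ref{re:grealana}; these are exactly the integrability statements needed to discard the boundary contributions and to apply Stokes' theorem as in the proof of Proposition~\ref{WPint}. I would phrase the partial-integration steps so that they only use this Sobolev regularity, so the computation is legitimate in the conical setting and not merely formal.
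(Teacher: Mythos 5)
Your overall route for \eqref{eq:brac2} is the same as the paper's (and for \eqref{eq:brac} the paper simply omits the computation, so your plan of expanding $[V_{\ol\ell},V_k]$ and matching against $\chi_{k\ol\ell}^{\;;z}$ is the natural one, modulo the slip that $\mu_k=a^z_{k;\ol z}$, not $a^z_{k;z}$). However, the accounting you describe for \eqref{eq:brac2} contains a step that would fail. You treat the two fiberwise components of $[V_{\ol\ell},V_k]$ symmetrically, claiming that harmonicity kills every term in which a derivative lands on a $\mu$, that the survivors assemble into $\chi_{k\ol\ell}^{\;;z}{}_{;\ol z}+\chi_{k\ol\ell}^{\;;\ol z}{}_{;z}$, and that a factor $\tfrac12$ then produces $\Delta\chi_{k\ol\ell}$. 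None of this is quite right: harmonicity is the statement $\mu^z_{i\,\ol z;z}=0$ and says nothing about $\mu^z_{i\,\ol z;\ol z}$, which is exactly the derivative produced by the $\chi_{k\ol\ell}^{\;;\ol z}\pt_{\ol z}$ component; the tensors $\chi^{;z}{}_{;\ol z}=g^{\ol z z}\chi_{;\ol z\ol z}$ and $\chi^{;\ol z}{}_{;z}=g^{z\ol z}\chi_{;zz}$ are not the Laplacian (the Laplacian is $\chi^{;z}{}_{;z}=g^{\ol z z}\chi_{;\ol z z}$); and since the two components of the bracket carry opposite signs in \eqref{eq:brac}, two symmetric "copies of $\Delta\chi$" would cancel rather than add. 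The correct mechanism, which is the content of the paper's proof, is asymmetric: the $-\chi_{k\ol\ell}^{\;;z}\pt_z$ component alone yields, pointwise and with no integration by parts, $-\chi_{k\ol\ell}^{\;;z}\mu^z_{i\,\ol z;z}+\chi_{k\ol\ell}^{\;;z}{}_{;z}\,\mu^z_{i\,\ol z}$, whose first term dies by harmonicity and whose second is $\Delta(\chi_{k\ol\ell})\mu_i$, producing the whole right-hand side of \eqref{eq:brac2}; the $\chi_{k\ol\ell}^{\;;\ol z}\pt_{\ol z}$ component instead combines into the exact term $\bigl(\chi_{k\ol\ell}^{\;;\ol z}\mu^z_{i\,\ol z}\mu^{\ol z}_{\ol\jmath\, z}\bigr)_{;\ol z}$ (here one uses $\mu^{\ol z}_{\ol\jmath\, z;\ol z}=0$, the harmonicity of $\mu_{\ol\jmath}$) and contributes nothing after the divergence theorem. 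You need this asymmetric split; your symmetric version does not reproduce the stated identity.

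A secondary point: for the justification of the divergence theorem near the punctures, membership of $\chi_{k\ol\ell}$ in $H^q_2$ and of $|\mu|^2 g_{\bf a}/g_0$ in $L^h$ is not by itself enough to discard the loop integrals around the cone points. The paper's conical proof of \eqref{eq:brac2} uses the pointwise decay $|\chi_{k\ol\ell\,;z}|\lesssim r^{-2a+1}$ (Lemma~\ref{le:chiz}, obtained via the $\ol z$-antiderivative construction of Lemma~\ref{le:anti}) together with $|\mu_i\mu_{\ol\jmath}|\sim r^{-2+4a}$, so that the boundary contribution is $O(r\cdot r^{4a-1})\to 0$. If you intend your partial integrations to be legitimate in the conical setting, you should plan on establishing these pointwise estimates rather than appealing only to Sobolev regularity.
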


We omit the computational proof of \eqref{eq:brac}. In order to see
\eqref{eq:brac2} we write
$$
[\chi^{;z}_{k\ol\ell}\pt_z, \mu_{i\, \ol z}^z \pt_z \ol{dz}]^z_{\; \ol z}
= -\chi^{;z}_{k\ol\ell} \mu^z_{i \ol z; z} + \chi^{;z}_{k\ol\ell;z} \mu^z_{i \ol z},
$$
where the first term on the right-hand side vanishes because of the
harmonicity of the Beltrami differential $\mu_i$. So we have the
right-hand side of \eqref{eq:brac2}. Finally
$$
[\chi^{;\ol z}_{k\ol\ell}\pt_{\ol z}, \mu_{i\, \ol z}^z \pt_z \ol{dz}]^z_{\; \ol z}
=\chi^{;\ol z}_{k\ol\ell}\mu_{i\, \ol z; \ol z}^z +\chi^{;\ol z}_{k\ol\ell;\ol z}\mu_{i\, \ol z}^z
=  (\chi^{;\ol z}_{k\ol\ell}\mu_{i\, \ol z}^z )_{;\ol z}
$$
so that (again by harmonicity)
$$
[\chi^{;\ol z}_{k\ol\ell}\pt_{\ol z}, \mu_{i\, \ol z}^z \pt_z \ol{dz}]^z_{\; \ol z} \cdot \mu^{\ol z}_{\ol\jmath z}
=  (\chi^{;\ol z}_{k\ol\ell}\mu_{i\, \ol z}^z \mu^{\ol z}_{\ol\jmath z})_{;\ol z}.
$$
The divergence theorem implies that the integral vanishes. \qed

We continue the computation of \eqref{eq:dklGij}.

We use the fact that $L_{V_{\ol\jmath}}L_{V_k}(\mu_i) =
L_{[V_\ol{\ell},V_k]}(\mu_i) +L_{V_k}L_{V_{\ol\jmath}}(\mu_i)$ and apply
Lemma~\ref{le:brac}. Now
\begin{gather}
\pt_{\ol\ell}\pt_k G_{i\ol\jmath} = \int_{\cX_s}
L_{[V_\ol{\ell},V_k]}(\mu_i) \mu_{\ol \jmath}\, g\, dA + \int_{\cX_s}
L_{V_k}L_{V_\ol\ell}(\mu_i) \mu_{\ol \jmath}\, g\, dA \label{eq:curv1} \\
\hspace{7cm}+ \int_{\cX_s} L_{V_k}(\mu_i) L_{V_\ol\ell}(\mu_{\ol
\jmath})\, g\, dA\nonumber
\end{gather}
The third term of \eqref{eq:curv1} vanishes at $s_0$, because for $s=s_0$
in normal coordinates
$$
L_{V_k}(\mu_i)=0.
$$
Now
\begin{gather}
\pt_{\ol\ell}\pt_k G_{i\ol\jmath} = -\int_{\cX_s}\Delta(
\chi_{k\ol\ell})\mu_i\mu_\ol\jmath\, g \, dA + \pt_k \int_{\cX_s}
L_{V_\ol\ell}(\mu_i)\mu_\ol\jmath\, g\, dA\label{eq:curv2}\\
\hspace{3cm}  - \int_{\cX_s} L_{V_\ol\ell}(\mu_i) L_{V_k}(\mu_{\ol
\jmath})\, g\, dA. \nonumber
\end{gather}
In order to treat the {\em first term} of \eqref{eq:curv2}, we use the
equation
\begin{equation}\label{eq:lap}
(-\Delta + id) \chi_{k \ol\ell} =\mu_k \mu_{\ol \ell}
\end{equation}
corresponding to \eqref{laplace}. So that
\begin{gather}
-\int_{\cX_s}\Delta(\chi_{k\ol\ell})\mu_i\mu_\ol\jmath\, g \, dA =
\int_{\cX_s}\Delta(-\Delta+id)^{-1}(\mu_k\mu_\ol\ell)\cdot(\mu_i\mu_\ol\jmath)\,
g \, dA \nonumber \\
= -\int_{\cX_s}\left((-\Delta + id) - id
\right)(-\Delta+id)^{-1}(\mu_k\mu_\ol\ell)\cdot(\mu_i\mu_\ol\jmath)\, g \,
dA\label{eq:chidelta}   \\
=-\int_{\cX_s}(\mu_k\mu_\ol\ell)\cdot(\mu_i\mu_\ol\jmath)\, g \, dA +
\int_{\cX_s}(-\Delta+id)^{-1}(\mu_k\mu_\ol\ell)\cdot(\mu_i\mu_\ol\jmath)\,
g \, dA \nonumber
\end{gather}
The {\em second term} of \eqref{eq:curv2} vanishes by \eqref{eq:lvkj}.

In the {\em third term} of \eqref{eq:curv2} all three components of
\eqref{eq:lvmu} matter. We will use the following identity that follows
from the hyperbolicity of the metrics:
$$
\chi_{k\ol \jmath \; ;zz\ol z}=
\chi_{k\ol \jmath\; ;z\ol z z} -\chi_{k\ol \jmath\; ;z} R^z_{\; z z \ol z} =
\chi_{k\ol \jmath\; ;z\ol z z} -g_{z \ol z}\; \chi_{k\ol \jmath \; ; z}.
$$
So
\begin{gather*}-\int_{\cX_s} (\chi_{i \ol\ell})_{;\ol z\ol
z}(\chi_{k\ol\jmath})_{;zz} (g^{\ol z z})^2  \,g \, dA =
\int_{\cX_s}(\chi_{i \ol\ell})_{;\ol z}(\chi_{k\ol\jmath})_{;zz\ol
z}(g^{\ol z z})^2  \,g \, dA\\
= -\int_{\cX_s}(\chi_{i \ol\ell})_{;\ol z
z}\left((\chi_{k\ol\jmath})_{;z\ol z} - g_{z\ol z}\chi_{k\ol\jmath}
\right)(g^{\ol z z})^2 \,g \, dA = \int_{\cX_s} \Delta(\chi_{i\ol\ell})
\mu_k\mu_{\ol \jmath} \, g\, dA
\end{gather*}
The argument above shows that this is exactly equal to
$$
-\int_{\cX_s}(\mu_i\mu_\ol\ell)\cdot(\mu_k\mu_\ol\jmath)\, g \, dA +
\int_{\cX_s}(-\Delta+id)^{-1}(\mu_i\mu_\ol\ell)\cdot(\mu_k\mu_\ol\jmath)\,
g \, dA.
$$
Hence the third term in \eqref{eq:curv2} contains the three contributions
of \eqref{eq:lvmu}, it equals
\begin{gather}
-\int_{\cX_s} (\chi_{k\ol\jmath})_{;zz} (\chi_{i \ol\ell})_{;\ol z\ol z}
(g^{\ol z z})^2  \,g \, dA \hspace{4cm}\strut \nonumber \\ \hspace{4cm}+
\int_{\cX_s} (\mu_i\mu_\ol \jmath)(\mu_k \mu_\ol\ell ) \,g \, dA +
\int_{\cX_s} (\mu_i\mu_\ol \ell)(\mu_k \mu_\ol\jmath ) \,g \, dA \label{eq:chizzpartial}  \\
= \int_{\cX_s} (-\Delta + id)^{-1}(\mu_k \mu_{\ol\jmath}) (\mu_i
\mu_\ol\ell) + \int_{\cX_s} (\mu_i\mu_\ol \jmath)(\mu_k \mu_\ol\ell ) \,g
\, dA \nonumber
\end{gather}
(Here we gathered the Beltrami differentials in a convenient way.)

Adding all terms together, we have the curvature of the \wp metric.

\begin{theorem}
Let $s^i$ be holomorphic coordinates on the \tei space and let the tangent
vectors $\left.\frac{\pt}{\pt s^i}\right|_{s_0}$ correspond to the
harmonic Beltrami differentials $\mu_i$ on $X=\cX_{s_0}$. Then
\begin{eqnarray}
{R_{i\ol{\jmath}k\ol{\ell}}}(s_0) &=& \int_X \left( \Delta-
id \right)^{-1}(\mu_i \mu_\ol{\jmath})\;\mu_k \mu_\ol{\ell}\; g\, dA \label{eq:curv}  \\
&&+   \int_X \left( \Delta- id \right)^{-1}(\mu_i \mu_\ol{\ell})\;\mu_k \mu_\ol{\jmath}\; g\, dA.\nonumber
\end{eqnarray}
holds.
\end{theorem}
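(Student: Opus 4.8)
The plan is to assemble the second derivative $\pt_{\ol\ell}\pt_k G_{i\ol\jmath}$ at the base point $s_0$ from the three terms in \eqref{eq:curv2}, using normal coordinates of the second kind so that $L_{V_k}(\mu_i)$ and all first derivatives $\pt_k G_{i\ol\jmath}$ vanish at $s_0$. First I would recall that the curvature tensor of a K\"ahler metric in normal coordinates at $s_0$ is simply $R_{i\ol\jmath k\ol\ell} = -\pt_{\ol\ell}\pt_k G_{i\ol\jmath}(s_0)$, so it suffices to compute the latter. The three terms of \eqref{eq:curv2} have already been analyzed in the preceding paragraphs: the {\em second term} vanishes by \eqref{eq:lvkj} (harmonicity of $\mu_i$); the {\em first term}, via the resolvent identity applied to \eqref{eq:lap}, contributes
$$
-\int_{\cX_{s_0}}(\mu_k\mu_\ol\ell)(\mu_i\mu_\ol\jmath)\,g\,dA + \int_{\cX_{s_0}}(-\Delta+id)^{-1}(\mu_k\mu_\ol\ell)(\mu_i\mu_\ol\jmath)\,g\,dA ;
$$
and the {\em third term}, after the integration-by-parts chain using the hyperbolicity identity for $\chi_{k\ol\jmath\,;zz\ol z}$, contributes
$$
\int_{\cX_{s_0}}(-\Delta+id)^{-1}(\mu_k\mu_\ol\jmath)(\mu_i\mu_\ol\ell)\,g\,dA + \int_{\cX_{s_0}}(\mu_i\mu_\ol\jmath)(\mu_k\mu_\ol\ell)\,g\,dA .
$$

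Then I would simply add these: the two pure $L^2$ terms $\pm\int(\mu_i\mu_\ol\jmath)(\mu_k\mu_\ol\ell)\,g\,dA$ cancel, leaving
$$
\pt_{\ol\ell}\pt_k G_{i\ol\jmath}(s_0) = \int_X (-\Delta+id)^{-1}(\mu_k\mu_\ol\ell)(\mu_i\mu_\ol\jmath)\,g\,dA + \int_X (-\Delta+id)^{-1}(\mu_k\mu_\ol\jmath)(\mu_i\mu_\ol\ell)\,g\,dA .
$$
Negating (and using $(\Delta-id)^{-1} = -(-\Delta+id)^{-1}$, together with the symmetry of the resolvent under swapping the paired arguments) yields exactly the asserted formula \eqref{eq:curv}. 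I would take care to match the index patterns: the first resolvent term pairs $(\mu_i\mu_\ol\jmath)$ with $(\mu_k\mu_\ol\ell)$ and the second pairs $(\mu_i\mu_\ol\ell)$ with $(\mu_k\mu_\ol\jmath)$, which is what appears in the statement.

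The subtle points I would be careful about are analytic rather than algebraic. First, all the integrations by parts and the divergence-theorem steps (e.g.\ in \eqref{eq:brac2} and in the $\chi_{;zz}$ manipulation) must be justified on the {\em punctured} surface $X'$; the justification rests on Lemma~\ref{Sob}, which places $\chi_{k\ol\ell}\in H_2^q(X)$ for $q>1$ and $\mu_i\in H_1^p(X)$ for $p<h_0$, together with the integrability of $|\mu|^2 g_{\bf a}/g_0$, so that the relevant boundary terms near the punctures vanish and $(-\Delta+id)^{-1}$ is a bounded operator on the appropriate $L^h$ spaces (again by \cite[Lemma 2.1]{S-T}). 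Second, differentiating $\pt_k\int_{\cX_s} L_{V_\ol\ell}(\mu_i)\mu_\ol\jmath\,g\,dA$ under the integral sign and commuting $L_{V_\ol\jmath}L_{V_k} = L_{[V_\ol\ell,V_k]} + L_{V_k}L_{V_\ol\ell}$ requires the $C^\infty$-dependence on $s$ from Theorem~\ref{smooth} and Lemma~\ref{Sob}(iii). I expect the {\em main obstacle} to be precisely verifying that this weak regularity suffices for every boundary-term cancellation in the $\chi_{;zz}$ integration-by-parts chain, since there $\chi$ is differentiated three times; one reduces the apparent order using the hyperbolicity identity $\chi_{k\ol\jmath\,;zz\ol z} = \chi_{k\ol\jmath\,;z\ol zz} - g_{z\ol z}\chi_{k\ol\jmath\,;z}$ and the harmonicity $\mu^z_{i\ol z;z}=0$ to rewrite everything in terms of $\Delta\chi_{i\ol\ell}$, which by \eqref{eq:lap} is controlled, and then the earlier resolvent computation applies verbatim.
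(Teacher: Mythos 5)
Your proposal is correct and follows the paper's own argument essentially verbatim: normal coordinates of the second kind, the three-term decomposition \eqref{eq:curv2}, the resolvent identity applied via \eqref{eq:lap} to the first term, the vanishing of the second term by \eqref{eq:lvkj}, and the integration-by-parts treatment of the third term leading to \eqref{eq:chizzpartial}, after which the pure $L^2$ terms cancel and self-adjointness of the resolvent gives \eqref{eq:curv}. Your analytic caveats about boundary terms near the punctures pertain to the subsequent conical-case theorem rather than to this compact-case statement, where all integrations by parts are unproblematic.
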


(We have been using the complex Laplacian with non-positive eigenvalues as
opposed to the real one, which accounts for a factor of $2$.)

In the case of the generalized \wp metric for weighted Riemann surfaces we
will show that the same formula holds, for weights larger that $1/2$. This
is the range, where also Fenchel-Nielsen coordinates were introduced. It
contains the interesting range of weights of the form $1-1/m$, $m>2$,
which arise from orbifold singularities.

\begin{theorem}
Let $(X,\A)$ with $1/2 < a_j <1$  be a weighted punctured Riemann surface,
which is represented by a point $s_0$ in the \tei space
$\mathcal{T}_{\gamma,n}$. Let $s^1,\ldots s^N$ be any local holomorphic
coordinates near $s_0$, and let $\mu_{\alpha} \in H^1(X,\A)$ be harmonic
representatives of the vectors $\left.{\frac{\partial}{\partial
s\alpha}}\right|_{s_0}$. Then the curvature tensor of the \wp metric is
given by \eqref{eq:curv}, where the Laplacian and the area elements are
replaced by $\Delta_{\bf a}$ and $dA_{g_{\bf a}}$, which are induced by
the hyperbolic conical metric on the fiber.
\end{theorem}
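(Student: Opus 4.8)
The plan is to reduce the conical case to the compact computation already carried out, by justifying every formal manipulation in that computation in the presence of the fractional singularities, under the standing assumption $1/2 < a_j < 1$. The scheme of \S6 rests on four pillars: (a) the horizontal lifts $V_k$ and the functions $\chi_{k\ol\ell}= \langle V_k,V_\ell\rangle_{\omega_\cX}$ are well-defined and sufficiently regular; (b) the differentiation-under-the-integral formulas \eqref{eq:dGij} and \eqref{eq:dklGij} hold; (c) all the partial integrations---those producing \eqref{eq:lvkj}, \eqref{eq:brac2}, the passage from \eqref{eq:chidelta} to the final form, and the two-fold integration by parts in \eqref{eq:chizzpartial}---are legitimate, i.e.\ the boundary terms around the punctures vanish; and (d) the operator $(-\Delta_{\bf a}+\mathrm{id})^{-1}$ makes sense and is self-adjoint on the relevant function space. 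I would treat these in that order.

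For (a) and (b) I would invoke Theorem~\ref{smooth} and especially Lemma~\ref{Sob}: by Lemma~\ref{Sob}(ii) the diagonal functions $\chi=\chi_{k\ol k}$ lie in $H_2^q(X)$ for $1\le q<q_0$, and by polarization the same holds for the mixed $\chi_{k\ol\ell}$; Lemma~\ref{Sob}(iv) gives $\mu_i\in H_1^p$ for $p<h_0=\min_j \tfrac{1}{1-a_j}$, which for $a_j>1/2$ means $p$ can be taken strictly above $2$. Lemma~\ref{Sob}(iii) gives the $C^\infty$ dependence on $s$ of $\chi_{i\ol\jmath}g_{\bf a}/g_0$ and of $|\mu|^2 g_{\bf a}/g_0$ in the appropriate $L^h$, $H_2^q$ spaces, which is exactly what legitimizes passing $\pt_k$ inside $\int_{\cX_s}$ as in the proof of Theorem~\ref{th:fibint} (pointwise a.e.\ convergence of an $L^p$-convergent subsequence identifies the derivative of the integrand). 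The key numerical point is that for $a_j>1/2$ one has $h_0>2$, so $\mu_i\in H_1^p$ with $p>2$, hence $\mu_i\in L^r$ for all finite $r$ and $|\mu_i|^2\in L^r$ for all finite $r$; and $\chi_{k\ol\ell}\in H_2^q$ with $q$ close to $2$ embeds into $C^0$ (indeed $C^{1,\alpha}$ locally on $X'$), so products like $\Delta(\chi_{i\ol\ell})\mu_k\mu_{\ol\jmath}$ and $(\chi_{k\ol\jmath})_{;zz}(\chi_{i\ol\ell})_{;\ol z\ol z}(g^{\ol z z})^2$ are integrable against $dA_{g_{\bf a}}$.

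The heart of the matter, and the main obstacle, is (c): the partial integrations. In the compact case these are the divergence theorem on a closed surface; here one must excise disks $\{|z|<\epsilon\}$ around each $p_j$, apply Stokes on $X\setminus\bigcup_j\{|z|<\epsilon\}$, and show the boundary integrals over the circles $\{|z|=\epsilon\}$ tend to $0$. The dangerous integrations are the two successive ones in \eqref{eq:chizzpartial}, which move two derivatives off $\chi_{i\ol\ell}$: one must control $\chi_{k\ol\jmath;z}$, $\chi_{k\ol\jmath;zz}$ and $\chi_{k\ol\jmath;z\ol z}$ near the punctures. Here I would combine the elliptic regularity for $\chi_{k\ol\ell}$ coming from \eqref{eq:lap} (rewritten as $-\Delta_{g_0}\chi + (g_{\bf a}/g_0)\chi = (g_{\bf a}/g_0)\mu_k\mu_{\ol\ell}$, with the coefficient $g_{\bf a}/g_0\in L^p$, $p<1/a_j$) with the real-analytic model of Remark~\ref{re:grealana} for $g_{\bf a}$ in the coordinate where $\rho(z)=\eta(|z|^{2(1-a_j)})$: this lets one read off the precise order of vanishing/blow-up of $\chi$ and its covariant derivatives, e.g.\ $\chi_{k\ol\jmath;z\ol z}=g_{\bf a}(\chi_{k\ol\jmath}-\ldots)$ behaves like $|z|^{-2a_j}$ times bounded, while the measure $dA_{g_{\bf a}}\sim |z|^{-2a_j}\,dx\,dy$ and the circle has length $\sim\epsilon$. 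A careful bookkeeping---using that $a_j<1$ keeps $|z|^{-2a_j}$ locally integrable in the plane and that $\mu_i,|\mu_i|^2$ have arbitrarily high $L^r$ integrability because $a_j>1/2$---shows each boundary term is $O(\epsilon^{\delta})$ for some $\delta>0$. The constraint $a_j>1/2$ enters precisely to guarantee that the worst product $(\chi_{k\ol\jmath})_{;zz}(\chi_{i\ol\ell})_{;\ol z\ol z}$, which carries the highest singular order, is still integrable and its excised boundary term decays; this parallels the role the same bound plays in the proof of Theorem~\ref{th:ks}.

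Finally, for (d) I would note that $-\Delta_{g_{\bf a}}$ on $L^2(X,dA_{g_{\bf a}})$ is essentially self-adjoint and nonnegative (the conical metric is a genuine metric of finite volume, and near each puncture it is isometric to a Euclidean cone, so the Friedrichs extension is the unique one), whence $(-\Delta_{g_{\bf a}}+\mathrm{id})^{-1}$ is a bounded self-adjoint operator; combined with Lemma~\ref{Sob}(i), $|\mu_i|^2\in L^2$ (again using $a_j>1/2$, so $h_0>2$), so $(-\Delta_{g_{\bf a}}+\mathrm{id})^{-1}(\mu_i\mu_{\ol\jmath})$ lies in the domain and the pairings in \eqref{eq:curv} are finite, and self-adjointness justifies the symmetric rearrangement of $(-\Delta+\mathrm{id})-\mathrm{id}$ exactly as in \eqref{eq:chidelta}. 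Assembling (a)--(d), the identities \eqref{eq:curv1}, \eqref{eq:curv2}, \eqref{eq:chidelta}, \eqref{eq:chizzpartial} go through verbatim with $\Delta\rightsquigarrow\Delta_{g_{\bf a}}$ and $dA\rightsquigarrow dA_{g_{\bf a}}$, and summing the contributions yields \eqref{eq:curv} in the conical setting.
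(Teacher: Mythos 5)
Your overall reduction scheme---establish the regularity, justify differentiation under the integral sign and the partial integrations, then run the compact computation verbatim---is the same as the paper's, and your points (a), (b) and (d) match what is actually done there. The gap is in (c), at exactly the step you yourself flag as dangerous. To show that the boundary terms of the two-fold partial integration in \eqref{eq:chizzpartial} vanish you need control of the pure second derivative $\chi_{k\ol\jmath\,;zz}$ near the punctures, and you assert that its ``precise order of vanishing/blow-up'' can be read off from elliptic regularity for \eqref{eq:lap} together with the real-analytic model of Remark~\ref{re:grealana}. It cannot: the equation only controls the mixed derivative $\chi_{;z\ol z}=g_{\bf a}(\chi-\mu_k\mu_{\ol\jmath})$, and membership of $\chi_{k\ol\jmath}$ in $H^q_2$ with $q<q_0\le \min_j(1/a_j)<2$ gives integrability of $\chi_{;zz}$ but no pointwise decay on the circles $|z|=\epsilon$. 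Even the first-derivative bound $|\chi_{;z}|\lesssim r^{1-2a}$ (Lemma~\ref{le:chiz}) is not a Sobolev-embedding consequence; the paper obtains it by an explicit $\ol z$-antiderivative construction (Lemma~\ref{le:anti}) plus the fact that an $L^2$ holomorphic function on a punctured disk is bounded, a mechanism your sketch does not supply.

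The paper's way around the missing $\chi_{;zz}$ estimate is the genuinely nontrivial Lemma~\ref{le:intddb}: rather than estimating the boundary term directly, one shows that $\frac{\ii}{2}\pt\ol\pt\bigl(g^{\ol z z}\chi_{;z}\chi_{;\ol z}\bigr)$ becomes nonnegative after subtracting an explicit $O(r^{8a-4})$ correction, so that $r\,\pt_r\wt\tau$ is monotone for the radial average $\wt\tau$; a nonzero limit of $r\,\pt_r\wt\tau$ would force $\wt\tau$ to blow up logarithmically, contradicting the bound $g^{\ol z z}\chi_{;z}\chi_{;\ol z}\lesssim r^{2-2a}$ coming from Lemma~\ref{le:chiz}. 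Combined with the separately proved vanishing \eqref{eq:chizolz}, this kills the residue containing $\chi_{;zz}$ without ever bounding $\chi_{;zz}$ pointwise. Your ``careful bookkeeping \ldots shows each boundary term is $O(\epsilon^{\delta})$'' replaces this argument by an assertion that, for the worst term, has no visible proof; this is the missing idea.
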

In all of our arguments we will assume that the anti-holomorphic quadratic
differentials that define harmonic Beltrami differentials have at most a
pole at the given conical singularity, (in the holomorphic case the proofs
are still valid).

We first prove the statement of Proposition~\ref{pr:dGij} in the conical
case.

We need to see that the integration commutes with a differentiation with
respect to the parameter (after a differentiable trivialization of the
family). This follows like in the proof of Theorem~\ref{th:kaeh}:

Let first be $\wt V_k$ be any $C^\infty$ lift. Now the map $s\mapsto \mu_i
\mu_{\ol \jmath} g_{\bf a} $ again is a $C^\infty$ map from $S$ to
$L^h(X)$ by Lemma~\eqref{Sob}(iii). So by our previous argument,
$$
\pt_k G_{i\ol\jmath}(s)=\int_{\cX_s} L_{\wt V_k}(\mu_i \mu_{\ol\jmath} \,g\, dA)
$$
holds. Next, we need that
$$
V_k - \wt V_k = C^z \pt_z
$$
is a (global) tensor in fiber direction. Now
\begin{gather*}
[C^z \pt_z, (\mu_i\mu_{\ol \jmath}) g_{z\ol z} \ii dz \wedge \ol{dz}] \hspace{4cm} \strut \\
= C^z \pt_z((\mu_i\mu_{\ol \jmath}) g_{z\ol z})+ \pt_z(C^z)
((\mu_i\mu_{\ol \jmath}) g_{z\ol z})\ii dz \wedge \ol{dz} \\
= d(\ii  C^z\cdot (\mu_i\mu_{\ol \jmath}) g_{z\ol z}\ol{dz}) = d(C_{\ol
z}(\mu_i\mu_{\ol \jmath})).
\end{gather*}
{\em Claim.}
$$
\int_{\cX_s} d(C_{\ol z}(\mu_i\mu_{\ol \jmath})) =0.
$$

{\em Proof of the Claim.} We write the above integral as limit of
integrals over closed paths around the punctures. We first estimate the
coefficient $C_{\ol z}$. It satisfies the same estimates like the $a_{k\ol
z}$. Now
$$
\frac{\pt a_k^z}{\pt \ol z} = \mu_k = \frac{\ol \varphi}{g}
$$
for some holomorphic quadratic differential $\varphi$ with at most a
simple pole. Because of Remark~\ref{re:grealana} we can find a continuous
$\ol z$-anti-derivative $\eta$ of the right-hand side on a punctured disk
$U^*$. This fact follows from the more general Remark~\ref{re:} below.

The term
$$
a_k^z -\eta
$$
is holomorphic on $U^*$ and
$$
a_{k\ol z}= g \cdot a_k^z \in H^p_1(U) \subset L^2(U)
$$
for $p<1/a$ by Theorem~\ref{smooth}. In particular
$$
a_k^z - \eta \in L^2(U).
$$
Hence $a_k^z -\eta$ must be holomorphic at the puncture (cf.\ \cite[Lemma
5.3]{S-T}. Now
$$
|C_{\ol z}| \simeq |a_{k\ol z}|= |g \cdot a_k^z| \lesssim |z|^{-2a},
$$
and
$$
|C_z (\mu_i\mu_{\ol \jmath})| \lesssim \frac{1}{|z|^{2(1-a)}}
$$
so that
$$
\lim_{\epsilon \to 0} \int_{r=\varepsilon} \frac{rd\varphi}{r^{2(1-a)}} =0
$$
implies the claim. \qed

\begin{lemma}\label{le:anti}
Let $\Delta_r$ the disk of radius $r$ in $\C,$ and $1/2 < \alpha < 1$. Let
$f  \in C^{\infty}(\Delta_1 \backslash \{0 \},\C)$ be a function such that
$|z|^{2 \alpha} f(z) $ is bounded in a neighborhood of $0$. Let $U$ be a
relatively compact open subset of $\Delta_1$ containing $0$. Then the
equation
\begin{equation}\label{eq:dbar}
\frac{\partial g}{\partial \bar{z}}  = f
\end{equation}
has a solution which is of class $C^{\infty}$ on $(U \backslash \{0  \})$
and such that $|z|^{2 \alpha -1} g(z) $ is bounded in a neighborhood of
$0$. In particular it is contained in $L^2(U)$. Moreover any solution of
\eqref{eq:dbar}, which is in $L^2(U)$ has this boundedness property.
\end{lemma}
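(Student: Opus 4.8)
The plan is to construct the solution explicitly by the classical Cauchy--Pompeiu integral formula and then estimate. First I would set
\[
g(z) = \frac{1}{2\pi\ii}\int_{U} \frac{f(\zeta)}{\zeta - z}\, d\zeta \wedge d\bar\zeta,
\]
which is the standard particular solution of $\partial g/\partial\bar z = f$ on $U$; since $f$ is smooth on $U\setminus\{0\}$ and $|\zeta|^{-2\alpha}$ is integrable on $U$ (because $2\alpha < 2$), the integral converges and $g$ is well defined, and by elliptic regularity $g \in C^\infty(U\setminus\{0\})$. The key step is then the pointwise bound $|z|^{2\alpha-1}|g(z)| \lesssim 1$ near $0$. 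To get this, split the domain of integration into the part $|\zeta| < 2|z|$ and the part $|\zeta| \geq 2|z|$. On the near part, $|f(\zeta)| \lesssim |\zeta|^{-2\alpha}$ and one bounds $\int_{|\zeta|<2|z|} |\zeta|^{-2\alpha}|\zeta-z|^{-1}\,dA(\zeta)$; a scaling $\zeta = |z|w$ converts this to $|z|^{1-2\alpha}$ times a fixed convergent integral (convergent precisely because $2\alpha<2$ at the origin and $2\alpha > 1$ controls the singularity at $w = z/|z|$ — here one uses $1/2 < \alpha < 1$). On the far part $|\zeta| \geq 2|z|$ one has $|\zeta - z| \geq |\zeta|/2$, so the integrand is $\lesssim |\zeta|^{-2\alpha-1}$, and $\int_{2|z| \le |\zeta| \le R} |\zeta|^{-2\alpha-1}\,dA \lesssim |z|^{1-2\alpha}$ since $2\alpha+1 > 2$. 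Both pieces give $|g(z)| \lesssim |z|^{1-2\alpha}$, i.e.\ $|z|^{2\alpha-1}g(z)$ is bounded; and since $2(2\alpha-1) < 2$, such $g$ lies in $L^2(U)$.

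For the last sentence, let $g_1$ be any other $L^2(U)$ solution of \eqref{eq:dbar}. Then $g - g_1$ is holomorphic on $U\setminus\{0\}$ and lies in $L^2$ near $0$, hence extends holomorphically across $0$ (the removable-singularity fact already invoked in the excerpt, cf.\ \cite[Lemma 5.3]{S-T}); in particular $g-g_1$ is bounded near $0$, so $g_1$ inherits the boundedness property $|z|^{2\alpha-1}g_1(z) \to 0$, and in particular $g_1$ automatically has the stated boundedness property as well.

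The main obstacle I expect is the near-field estimate: one must verify that after rescaling the integral $\int_{\C} |w|^{-2\alpha}|w - e|^{-1}\,dA(w)$ (with $e = z/|z|$ a unit vector) is finite, which requires $2\alpha < 2$ for integrability at $w = 0$ and $1 < 2$ for integrability at $w = e$ — so the integrability near $w=e$ is automatic, but one needs to be careful that the tail of that integral (over $|w| \geq 1/2$, say) is folded into the far-field estimate rather than double-counted, and that the constant is uniform in the direction $e$. Organizing the dyadic split cleanly so the two regimes exactly cover $U$ without overlap issues, and checking that the hypothesis $\alpha > 1/2$ is what forces $2\alpha - 1 > 0$ (so the claimed boundedness is a genuine decay statement and the $L^2$ membership is strict), is the only delicate bookkeeping.
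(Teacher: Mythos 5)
Your proposal is correct and follows essentially the same route as the paper: the Cauchy--Pompeiu integral as the explicit particular solution, a rescaling $\zeta=z\eta$ (your near/far split is just a reorganization of the paper's single change of variables, leading to the same model integral $\int_{\C}|\eta|^{-2\alpha}|\eta-1|^{-1}\,dA$ whose convergence at infinity is where $\alpha>1/2$ enters), and the removable-singularity argument for $L^2$ holomorphic functions to handle the uniqueness clause. One small slip: in the last paragraph the difference $g-g_1$ being bounded gives $|z|^{2\alpha-1}(g-g_1)(z)\to 0$, hence $|z|^{2\alpha-1}g_1(z)$ \emph{bounded} (not tending to zero), which is all that is claimed and all that follows.
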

\begin{proof}
For any $0\leq r <\rho<1$, and $\Delta_{r,\rho} = \{ z \in \C : r < |z| <
\rho \}$. We define
$$
F(r,\rho)(z)=\frac{-1}{ \pi } \int_{ \Delta_{r,\rho}} \frac{f(\zeta)}{ \zeta - z}  \  i
\frac{d \zeta  \wedge d \overline{\zeta}}{2 }.
$$
It is known \mapa{?? quote some reference? e.g. Kodaira Morrow?} that for
$z\in \Delta_{r,\rho}$ with $r>0$
$$
\frac{\pt F(r,\rho)(z)}{\ol{\pt z}}=f(z)
$$
holds. Let $K$ be a compact subset of  $ \Delta_{\rho} \backslash  \{0
\}$. Let $r_0>0$ be chosen such that $K \subset \Delta_{r_0,\rho}$. Then,
for all $0<r<r_0$, the function $|f(\zeta)/|\zeta-z|$ is uniformly bounded
by some $M>0$ for $z\in K$ and $\zeta \in \Delta_{\rho_0}\backslash
\{0\}$. So
$$
|F(r,\rho)(z)-F(0,\rho)(z)| \leq M r^2,
$$
which implies uniform convergence for $r\to 0$. The same argument holds
for the derivatives with respect to $z$ and $\ol z$. It follows that
$F(0,\rho)$ is differentiable and solves \eqref{eq:dbar} on
$\Delta_\rho\backslash\{0\}$.

On the open set $\Delta_\rho\backslash\{0\}$ we write
$f(\zeta)=|\zeta|^{-2\alpha}m(\zeta)$ with $|m(\zeta)|\leq C$. Now  we
make the change of variables  $\zeta = z \eta$. Then
$$
|F(0,\rho)(z)|
\leq  \frac{1}{ \pi } C |z|^{-2 \alpha+1}  \int_{ \C} \frac{1}{| \eta|^{2
\alpha}|( \eta-1)|} \  i \frac{d \eta  \wedge d \overline{\eta}}{2 }.
$$
so that  we just need to show that
$$
\int_{ \C} \frac{1}{| \eta|^{2 \alpha}|( \eta-1)|} \
  i \frac{d \eta  \wedge d \overline{\eta}}{2 } < +
\infty.
$$
The convergence of the right-hand side integral follows from
$$
\int_0^1 \frac{r dr}{r^{2\alpha}} < \infty  \text{\quad and \quad}
\int_2^\infty \frac{rdr}{r^{2\alpha+1}} < \infty.
$$
Choose $\rho$ such that $\overline{U} \subseteq \Delta_{\rho}$. Then the
condition $|z|^{2 \alpha-1}g(z)$ bounded implies that  $g$  is in
$L^2(U).$ The second claim now follows from the fact that a holomorphic
function in $\Delta_{\rho} \backslash \{ 0  \}$, which is in $L^2(U)$ is
bounded on $U$.
\end{proof}
Applying the above Lemma applied to $g(z)/z$ we can treat the case
$0<\alpha<1/2$.
\begin{remark}\label{re:}
An analogous statement holds for $0<\alpha<1/2$: Under the same
boundedness assumption, the equation \eqref{eq:dbar} has a solution, which
extends continuously to the origin.
\end{remark}

\begin{remark}
If $ \alpha = \frac{1}{2}$ a statement as in  the Lemma does not hold in
general.
\end{remark}
In fact, let us choose $f(z) = 1/\overline{z} = (\partial/\partial
\overline{z}) \log(|z|^2)$. Assume that there exists a bounded function
$g$ on a small punctured disk such that $g - \log(|z|^2)$ is holomorphic.
Since  $g - \log(|z|^2)$ is in $L^2$ of the disk, we would obtain that it
is bounded. However $ \log(|z|^2)$ is not bounded near $0$. By replacing
$f$ by $z^k f$ and $g$ by $z^{-k} g$ for some integer $k$, we may prove a
similar lemma for $\alpha \in \R $ such that $ 2 \alpha \neq \Z,$ and find
an example as above if $2 \alpha \in \Z$. \qed

We return to the discussion of the generalized \wp metric. We know that
$$
\pt_k G_{i\ol\jmath}(s)=\int_{\cX_s} L_{V_k}(\mu_i \mu_{\ol\jmath} \,g\, dA)
$$
So far the integral can be computed in terms of the (singular) horizontal
lifts $V_k$, and we are in a position to also use covariant derivatives,
since the Lie derivatives can also be computed in terms of those.

We use the fact $L_{V_k}(g)=0$ from \eqref{eq:lvg}, which is still
pointwise true outside a set of measure zero so that the statement of
Proposition\ref{pr:dGij} in the conical case is reduced to showing that
$$
\int_{X} \mu_i L_{V_k}(\mu_{\ol\jmath})g_{\bf a}dA =0.
$$
By \eqref{eq:lvmu} the above integral equals
$$
- \int_{X} {\mu_i}^z_{\; \ol z} {\chi_{k\ol\jmath}}_{ \; ;z}^{;\ol z} \,g\, dA=
- \int_{X} \left({\mu_i}^z_{\; \ol z} {\chi_{k\ol\jmath}}^{;\ol z}\right)_{;z} \,g\, dA
$$
because of the harmonicity of $\mu_i$. This integral is up to a numerical
factor written as
$$
\int_X d\left(\mu_i \frac{\pt}{\pt z} \ol{dz} \cdot \chi_{k\ol\jmath;z} dz  \right)=
\int_X d\left(\mu_i \cdot \chi_{k\ol\jmath;z} \ol{dz}  \right)
$$
We consider the defining equation for $\chi=\chi_{k \ol \jmath}$ in the
form
$$
\frac{\pt^2 \chi}{\pt z \ol{\pt z}} = - g \mu_k \mu_{\ol \jmath}  + g \chi.
$$
We know that $\chi$ is continuous and for some neighborhood $U$ of a
puncture, again, we apply Lemma~\ref{le:anti} and take a $\ol
z$-anti-derivative $\eta$ of the right-hand side, which satisfies (with
$a> 1/2$)
$$
|\eta| \lesssim r^{1-2a}.
$$
Again, since $\pt\chi/ \pt z \in H^p_1 \subset L^2$, the function
$$
\frac{\pt \chi}{\pt z} - \eta \in \mathcal O(U^*)
$$
is holomorphic at the puncture. With the estimate for $\mu_i$ the argument
of the previous claim immediately yields the vanishing of the integral.

Next, we chose normal coordinates for the \wp metric on $S$ of the second
kind at a given pint $s=s_0$.

This concludes the proof of Proposition~\ref{pr:dGij} in the conical case.

We will follow the computation of the curvature of the \wp metric in the
compact case.

\begin{lemma}\label{le:lvkijconic}
The Lie derivatives
$$
L_{V_k}(\mu_i)^z_{\; \ol z} \pt_z \ol{dz}
$$
of the harmonic Beltrami differentials are again harmonic Beltrami
differentials with respect to the conical structure (depending in a
$C^\infty$-way upon the parameter).
\end{lemma}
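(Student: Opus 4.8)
The plan is to imitate the proof of Lemma~\ref{le:lvkij}, supplying the two ingredients that the conical situation requires: regularity of $L_{V_k}(\mu_i)$ at the punctures, so that it actually lies in $H^1(X,{\bf a})$ in the sense of Definition~\ref{harmbelt}, and $C^\infty$-dependence on the parameter. First I would establish the pointwise identity $\nabla_z L_{V_k}(\mu_i)=0$ on $X'=X\backslash\{p_1,\ldots,p_n\}$. There $g_{\bf a}$ is an ordinary smooth metric of curvature $-1$ and $\mu_i=\overline{\varphi_i}/g_{\bf a}$ with $\varphi_i$ holomorphic, so the computation is word for word the one of Lemma~\ref{le:lvkij} (the formal counterpart of $\ol\pt^*L_{V_k}(\mu_i)=0$ in \cite{Sch-3}): it uses only hyperbolicity \eqref{eq:hy}, the Ricci identity $R^z_{\; zz\ol z}=-g_{\bf a}$, the harmonicity ${\mu_i}^z_{\;\ol z;z}=0$, and the description \eqref{eq:defmu}, \eqref{eq:chi}, \eqref{eq:lvmu} of $L_{V_k}(\mu_i)^z_{\;\ol z}$ as a second covariant derivative of the relevant functions $\chi_{k\ol\ell}$ plus products of the $\mu$'s. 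Hence $\psi_{ki}:=g_{\bf a}\,\overline{L_{V_k}(\mu_i)^z_{\;\ol z}}\,dz^2$ is a holomorphic quadratic differential on $X'$, and $L_{V_k}(\mu_i)=\overline{\psi_{ki}}/g_{\bf a}$ has the form required by Definition~\ref{harmbelt} as soon as we know that $\psi_{ki}$ extends across each puncture with at most a simple pole.

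That regularity is the main obstacle, and I would establish it by the methods already used in the excerpt. Fix a puncture $p_j$ with $1/2<a_j<1$ and work in the special local coordinates. By Lemma~\ref{Sob}(ii), $\chi_{k\ol\ell}\in H^q_2(\mathcal{U}_j)$ for $1\leq q<q_0$, hence $\partial\chi_{k\ol\ell}/\partial z\in H^q_1\subset L^2(\mathcal{U}_j)$. Starting from the defining equation $(-\Delta_{g_{\bf a}}+id)\chi_{k\ol\ell}=\mu_k\mu_{\ol\ell}$ of \eqref{eq:lap}, the argument of the Claim preceding Lemma~\ref{le:anti}, together with the discussion of $\chi_{k\ol\jmath}$ following Remark~\ref{re:}, applies: Lemma~\ref{le:anti} produces a $\ol z$-antiderivative $\eta$ of the right-hand side with $|\eta|\lesssim r^{\,1-2a_j}$, so that $\partial\chi_{k\ol\ell}/\partial z-\eta$ is holomorphic on the punctured disk and, being in $L^2$, holomorphic across $p_j$. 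Feeding this back through \eqref{eq:lvmu} — together with the estimates for the products of the $\mu$'s from Lemma~\ref{le:phiL1}, and, where convenient, the explicit expansion of $L_{V_k}(\mu_i)^z_{\;\ol z}$ in derivatives of $\log g_{\bf a}$ in the style of the proof of Lemma~\ref{le:phiL1}, using \eqref{L^1} and Theorem~\ref{smooth} — bounds $\psi_{ki}$ near $p_j$ by a negative power of $|z|$ strictly larger than $-2$. A holomorphic quadratic differential on $\mathcal{U}_j\backslash\{p_j\}$ with that growth has at most a simple pole, so $\psi_{ki}\in H^0(X,\Omega^2_X(D))$ and $L_{V_k}(\mu_i)=\overline{\psi_{ki}}/g_{\bf a}\in H^1(X,{\bf a})$; the $L^1$-integrability and the vanishing of the boundary integrals over $\{|z|=\varepsilon\}$ as $\varepsilon\to0$ then follow exactly as in Lemma~\ref{le:phiL1}.

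Finally, the $C^\infty$-dependence on $s$ follows from Theorem~\ref{smooth} together with Lemma~\ref{Sob}(iii): the maps $s\mapsto\chi_{k\ol\ell}\in H^q_2(X)$ are of class $C^\infty$, the smooth dependence on the parameter of the solutions of the elliptic equation \eqref{eq:lap} propagates through the covariant differentiations producing $L_{V_k}(\mu_i)$, and rewriting the outcome once more as $\overline{\psi_{ki}}/g_{\bf a}$ with $\psi_{ki}$ a quadratic differential of the type controlled in Lemma~\ref{Sob}(iv) (via Remark~\ref{re:grealana}) puts its coefficient in a fixed $H^p_1$ and yields the asserted $C^\infty$ family of harmonic Beltrami differentials. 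I expect the delicate point to be precisely the borderline regularity at the punctures for $1/2<a_j<1$, where the $L^2$-membership of $\partial\chi_{k\ol\ell}/\partial z$ and the bound $|\eta|\lesssim r^{\,1-2a_j}$ are exactly on the edge of what is needed — this is the reason the weight restriction $a_j>1/2$ governs the whole curvature computation.
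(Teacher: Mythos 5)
Your argument is correct, and its backbone coincides with the paper's: both proofs import the pointwise identity $\nabla_z L_{V_k}(\mu_i)=0$ from Lemma~\ref{le:lvkij} (valid on $X'$ since that computation is purely local), so that $\psi_{ki}=g_{\bf a}\,\ol{L_{V_k}(\mu_i)^z_{\;\ol z}}\,dz^2$ is holomorphic off the punctures, and then reduce everything to showing it has at most a simple pole there. Where you diverge is the mechanism for that regularity step. The paper expands
$g_{\bf a}\cdot L_{V_k}(\mu_i)^z_{\;\ol z}= -\pt_{\ol z}(\log g_{\bf a})(\pt_k g_{i\ol z})+\pt_{\ol z}(\pt_k g_{i\ol z})-g_{i\ol z}g_{k\ol z}$
and checks each term is in $L^1$ using only the Sobolev regularity from Theorem~\ref{smooth} ($\pt_{\ol z}\log g_{\bf a}\in L^q$ for $q<2$ paired with $\pt_k g_{i\ol z}\in H^p_1\subset L^h$, as in Lemma~\ref{le:phiL1}; the last term is a product of two $L^2$ functions); an $L^1$ holomorphic germ has at most a simple pole, so no antiderivative construction is needed. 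You instead route through the equation $(-\Delta+id)\chi=\mu_k\mu_{\ol\ell}$ and Lemma~\ref{le:anti} to obtain the pointwise bound $|\pt_z\chi|\lesssim r^{1-2a}$ (i.e.\ Lemma~\ref{le:chiz}) and from there a pointwise bound $|\psi_{ki}|\lesssim |z|^{-\beta}$ with $\beta<2$ --- machinery the paper reserves for the later partial-integration lemmas. That route does work, but one term needs care: in the product $\pt_{\ol z}(\log g_{\bf a})\cdot\pt_z\chi$ the factor $\pt_{\ol z}(\log g_{\bf a})$ is only known to lie in $L^q$, $q<2$, from \eqref{L^1} and Theorem~\ref{smooth}; a genuine pointwise bound $\lesssim|z|^{-1}$ requires Remark~\ref{re:grealana} (which you cite, but for a different purpose), or else one must retreat to the $L^q\times L^h\subset L^1$ pairing, which is exactly the paper's argument. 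Your route also leans on the restriction $a_j>1/2$ through Lemma~\ref{le:anti}, whereas the paper's $L^1$ expansion does not. Finally, you supply the $C^\infty$-dependence on $s$ explicitly via Theorem~\ref{smooth} and Lemma~\ref{Sob}(iii), a point the paper's proof leaves implicit.
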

Now we can apply the argument of Proposition~\ref{pr:dGij} literally to
$$
\int_{\cX_s} L_{V_k}(\mu_i) \mu_{\ol\jmath} \,g\, dA
$$
and get
\begin{corollary}
Equation \eqref{eq:dklGij} holds in the conical case.
\end{corollary}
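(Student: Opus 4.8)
The plan is to differentiate the identity $\pt_k G_{i\ol\jmath}(s)=\int_{\cX_s} L_{V_k}(\mu_i)\,\mu_{\ol\jmath}\,g\,dA$ --- the conical analogue of \eqref{eq:dGij} established just above --- once more with respect to the conjugate parameter, by exactly the device already used for Proposition~\ref{pr:dGij} in the conical case. First I would invoke Lemma~\ref{le:lvkijconic}: it tells us that $L_{V_k}(\mu_i)$ is again a harmonic Beltrami differential for the conical structure, depending in a $C^\infty$ way on $s$. Since $\mu_{\ol\jmath}$ is likewise harmonic, the local estimates from the proofs of Lemma~\ref{le:phiL1} and Lemma~\ref{Sob}(i) carry over verbatim to the product: near each puncture $p_j$ the function $L_{V_k}(\mu_i)\,\mu_{\ol\jmath}\,g_{\bf a}/g_0$ is dominated by $|z|^{-2(1-a_j)}$, so that $s\mapsto L_{V_k}(\mu_i)\,\mu_{\ol\jmath}\,g_{\bf a}/g_0$ is a $C^\infty$ map from $S$ into $L^h(X)$ for every $1\le h<h_0=\min_j 1/(1-a_j)$, with $h_0>2$ because $a_j>1/2$.

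Granting this, the fiber-integral differentiation argument of Theorems~\ref{th:kaeh} and~\ref{th:fibint} --- already applied above to the conical version of Proposition~\ref{pr:dGij}, and resting on the fact that $L^h$-convergence of the difference quotients of a $C^\infty$ $L^h$-valued map forces pointwise a.e.\ convergence along a subsequence --- yields
\[
\pt_{\ol\ell}\pt_k G_{i\ol\jmath}(s)=\int_{\cX_s} L_{\wt V_{\ol\ell}}\!\left(L_{V_k}(\mu_i)\,\mu_{\ol\jmath}\,g\,dA\right)
\]
for any $C^\infty$ lift $\wt V_{\ol\ell}$ of $\pt_{\ol\ell}$. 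Next I would pass to the (singular) horizontal lift $V_{\ol\ell}$: the difference $V_{\ol\ell}-\wt V_{\ol\ell}=C^{\ol z}\pt_{\ol z}$ is a globally defined tensor in fiber direction, so on each fiber $L_{C^{\ol z}\pt_{\ol z}}\bigl(L_{V_k}(\mu_i)\,\mu_{\ol\jmath}\,g\,dA\bigr)=d\,\iota_{C^{\ol z}\pt_{\ol z}}\bigl(L_{V_k}(\mu_i)\,\mu_{\ol\jmath}\,g\,dA\bigr)$ is exact. Writing its fiber integral as a limit of integrals over circles shrinking to the punctures, and observing that $C^{\ol z}$ obeys the same bounds as $\ol{a^z_k}$ --- which is precisely the estimate made in the Claim in the proof of Proposition~\ref{pr:dGij}, itself resting on Theorem~\ref{smooth}, Lemma~\ref{le:anti} and the hypothesis $a_j>1/2$ --- the loop integrals are $O(r^{2a_j-1})$ and tend to $0$; hence the correction term drops out, and the displayed identity holds with $V_{\ol\ell}$ in place of $\wt V_{\ol\ell}$.

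Finally I would expand the integrand by the Leibniz rule, using that $L_{V_{\ol\ell}}(g\,dA)=0$ pointwise outside a null set (the conjugate of \eqref{eq:lvg}, which remains valid in the conical setting). This gives
\[
L_{V_{\ol\ell}}\!\left(L_{V_k}(\mu_i)\,\mu_{\ol\jmath}\,g\,dA\right)=L_{V_{\ol\ell}}L_{V_k}(\mu_i)\,\mu_{\ol\jmath}\,g\,dA+L_{V_k}(\mu_i)\,L_{V_{\ol\ell}}(\mu_{\ol\jmath})\,g\,dA,
\]
and integrating over $\cX_{s_0}$ yields exactly \eqref{eq:dklGij}. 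The one step that genuinely requires care is the vanishing of the boundary correction in the second paragraph; this is exactly where the hypothesis $a_j>1/2$ is indispensable, through Lemma~\ref{le:anti}, which controls the pole orders of the relevant $\ol z$-antiderivatives --- and hence of $C^{\ol z}$ and of $L_{V_k}(\mu_i)$ --- near the conical points, the point being that for $a_j\le 1/2$ these loop integrals need not tend to zero.
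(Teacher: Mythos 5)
Your proposal is correct and follows essentially the same route as the paper: the paper's proof consists precisely of invoking Lemma~\ref{le:lvkijconic} and then "applying the argument of Proposition~\ref{pr:dGij} literally" to $\int_{\cX_s} L_{V_k}(\mu_i)\,\mu_{\ol\jmath}\,g\,dA$, which is exactly the three-step scheme you spell out (differentiate under the fiber integral via a smooth lift, correct to the singular horizontal lift by the vanishing of the loop integrals around the punctures, then expand with $L_{V_{\ol\ell}}(g\,dA)=0$). Your version merely makes explicit the estimates the paper leaves implicit, and they are the right ones.
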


\begin{proof}[Proof of Lemma~\ref{le:lvkijconic}] We have from Lemma~\ref{le:lvkij}
\begin{equation}\label{eq:dbsm}
\nabla_z  L_{V_k}(\mu_i)=0.
\end{equation}
We need to see that the anti-holomorphic term $g \cdot L_{V_k}(\mu_i)$ is
in $L^1$ so that it can have at most a simple pole. After the
verification, we know that $L_{V_k}(\mu_i)$ is a harmonic Beltrami
differential in our sense. We have
\begin{gather*}
g\cdot L_{V_k}(\mu_i)^z_{\; \ol z} = - g \pt_z\left(
\frac{1}{g}\pt_k(g_{i\ol z})\right)- g_{i \ol z} g_{k\ol z}\\ = - \pt_{\ol
z}(\log g)(\pt_k g_{i\ol z})+ \pt_{\ol z}(\pt_k g_{i\ol z}) - g_{i \ol z}
g_{k\ol z}.
\end{gather*}
We show that all three terms are in $L^1$: For the first term we can use
the proof of Lemma~\ref{le:phiL1}. Since $g_{i\ol z} \in H^p_1$ the second
term is in $L^1$. Finally both $g_{i\ol z}$ and $g_{k\ol z}$ are in
$H^p_1\subset L^2$.
\end{proof}

We prove the statement of Lemma~\ref{le:brac} in the conical case: The
equation \eqref{eq:brac} is pointwise and carries over. We show
\eqref{eq:brac2}: For the required partial integration we just need that
$$
\int_{\cX_s} d(\chi_{k \ol \ell ; z} \mu_i \mu_{\ol\jmath}dz)=0.
$$
As above we reduce this to the vanishing of limits of integrals along
closed paths around the punctures.

We know from Lemma~\ref{le:chiz} below that $ |\chi_{k \ol\ell\, : z}|
\lesssim r^{-2a+1} $ and $\mu_i\mu_\ol\jmath \sim r^{-2+4a}$ so that
$|\chi_{k \ol \ell ; z} \mu_i \mu_{\ol\jmath}| \lesssim r^{4a-1}$. So
$$
\lim_{r\to 0}( r\cdot r^{4a-1}) =0
$$
implies that the above integral vanishes, which proves \eqref{eq:brac2} in
the conical case.\qed

In particular \eqref{eq:curv2} is now valid in our situation. A purely
local computation (under the integral sign) implies \eqref{eq:chidelta}.

The final step is to arrive at \eqref{eq:chizzpartial} in the conical
case, i.e.\ to apply a twofold partial integration to
$$
\int_{\cX_s} (\chi_{i \ol\ell})_{;\ol z\ol z}(\chi_{k\ol\jmath})_{;zz}
(g^{\ol z z})^2  \,g \, dA.
$$
This is achieved by the following Lemmas \ref{le:chizolz} and
\ref{le:chizz}.

\begin{lemma}\label{le:chizolz}
The following singular integral vanishes.
\begin{equation}\label{eq:chizolz}
\int_X d(\chi_{k\ol\jmath\, ; z\ol z} \chi_{i\ol\ell\, ; \ol z} g^{\ol z
z} \ol{dz}) =0.
\end{equation}
\end{lemma}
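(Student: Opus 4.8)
The plan is to prove Lemma~\ref{le:chizolz} by the mechanism already used for \eqref{eq:brac2} and for the Claim in the proof of Proposition~\ref{pr:dGij}: Stokes' theorem turns the integral over $X$ of the exact form $d\beta$, where $\beta=\chi_{k\ol\jmath\, ;z\ol z}\,\chi_{i\ol\ell\, ;\ol z}\,g^{\ol z z}\,\ol{dz}$, into a sum of limits of line integrals over shrinking circles around the punctures, and the local decay of the coefficient of $\beta$ makes these boundary terms vanish.

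First I would record that $\beta$ is of class $C^\infty$ on $X'=X\setminus\{p_1,\dots,p_n\}$: indeed $\chi_{k\ol\ell}\in H^q_2(X)$ for some $q>1$ by Lemma~\ref{Sob}(ii), the coefficients of $\Delta_{g_{\bf a}}$ and the Beltrami differentials are smooth off the punctures, so by interior elliptic regularity $\chi_{k\ol\ell}\in C^\infty(X')$. Removing coordinate disks $\Delta_\epsilon(p_j)=\{|z|<\epsilon\}$ and applying Stokes on $X\setminus\bigcup_j\Delta_\epsilon(p_j)$,
$$\int_{X\setminus\bigcup_j\Delta_\epsilon(p_j)} d\beta = -\sum_j\int_{\partial\Delta_\epsilon(p_j)}\beta,$$
so it suffices to estimate the factors of the coefficient of $\beta$ near a puncture $p_j$ of weight $a=a_j\in(1/2,1)$ and to show that the right-hand side tends to $0$ as $\epsilon\to0$. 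This simultaneously shows $d\beta\in L^1(X)$ and that the left-hand side converges to the integral in the statement, so that the latter is well defined and equals $0$.

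Next I would collect the local estimates in the coordinate $z$ centered at $p_j$, writing $r=|z|$. For a scalar $\chi$ and the Kähler metric $g_{\bf a}$ one has $\chi_{;z}=\partial_z\chi$ and $\chi_{;z\ol z}=\partial_z\partial_{\ol z}\chi$, so the equation $(-\Delta_{g_{\bf a}}+id)\chi_{k\ol\jmath}=\mu_k\mu_{\ol\jmath}$ of \eqref{eq:lap} (cf.\ \eqref{laplace}), valid on $X'$, gives the pointwise identity $\chi_{k\ol\jmath\, ;z\ol z}=g_{\bf a}\,\chi_{k\ol\jmath}-g_{\bf a}\,\mu_k\mu_{\ol\jmath}$. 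Since $\chi_{k\ol\jmath}$ is bounded (by $H^q_2(X)\subseteq C^0(X)$) and $g_{\bf a}\,\mu_k\mu_{\ol\jmath}\lesssim r^{-2(1-a)}$ by Lemma~\ref{le:phiL1}, while $g_{\bf a}\lesssim r^{-2a}$, we obtain $|\chi_{k\ol\jmath\, ;z\ol z}|\lesssim r^{-2a}$, the term $r^{-2a}$ dominating because $a>1/2$. Moreover $|\chi_{i\ol\ell\, ;\ol z}|=|\partial_{\ol z}\chi_{i\ol\ell}|\lesssim r^{1-2a}$ by Lemma~\ref{le:chiz}, and $g^{\ol z z}=1/g_{\bf a}\lesssim r^{2a}$. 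Hence the coefficient of $\beta$ is $\lesssim r^{-2a}\cdot r^{1-2a}\cdot r^{2a}=r^{1-2a}$, and since $\ol{dz}$ restricted to $\partial\Delta_\epsilon(p_j)$ contributes a factor comparable to $\epsilon\,d\varphi$,
$$\Bigl|\int_{\partial\Delta_\epsilon(p_j)}\beta\Bigr|\lesssim \epsilon\cdot\epsilon^{1-2a}=\epsilon^{2(1-a)}\longrightarrow 0\qquad(\epsilon\to0),$$
because $a<1$. Summing over $j$ completes the proof.

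The main obstacle is the boundary estimate $|\chi_{i\ol\ell\, ;z}|\lesssim r^{1-2a}$, which is precisely the content of the forthcoming Lemma~\ref{le:chiz}, and which is the place where the hypothesis $a>1/2$ is essential (through an $\ol z$-antiderivative of the type in Lemma~\ref{le:anti}, where the borderline $\alpha=1/2$ of Remark~\ref{re:} must be avoided). Granting that estimate, everything else is the Stokes-and-power-counting scheme already in use above, the circle integral producing the harmless exponent $\epsilon^{2(1-a)}$ which vanishes as $\epsilon\to0$ since $a<1$.
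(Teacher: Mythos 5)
Your proposal is correct and follows essentially the same route as the paper: both use the equation $(-\Delta+\mathrm{id})\chi_{k\ol\jmath}=\mu_k\mu_{\ol\jmath}$ to identify $g^{\ol z z}\chi_{k\ol\jmath\,;z\ol z}$ with the bounded quantity $\chi_{k\ol\jmath}-\mu_k\mu_{\ol\jmath}$ (boundedness of $\mu_k\mu_{\ol\jmath}$ requiring $a>1/2$), combine this with the estimate $|\chi_{i\ol\ell\,;\ol z}|\lesssim r^{1-2a}$ from Lemma~\ref{le:chiz}, and conclude via Stokes on shrinking circles with the same power count $\epsilon\cdot\epsilon^{1-2a}\to 0$. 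The only inessential difference is that you bound the two factors separately before recombining, and your side remark that this also shows $d\beta\in L^1(X)$ is not actually established by the boundary estimates alone, but it is not needed for the lemma.
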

\begin{proof}
The integrand equals
\begin{gather*}
d\left((\chi_{k\ol\jmath} - \mu_{k}\mu_{\ol \jmath}  )\chi_{i \ol\ell\, :
\ol z}\ol{dz}\right).
\end{gather*}
Now we know from Lemma~\ref{le:chiz} below that
$$
|\chi_{i \ol\ell\, :\ol z}| \lesssim r^{-2a+1}
$$
and have the continuity of $\chi_{k\ol\jmath}$. Furthermore
$$
|\mu_{k}\mu_{\ol \jmath}|\lesssim r^{-2+4a}
$$
so that $\chi_{k\ol\jmath} - \mu_{k}\mu_{\ol \jmath}$ is continuous. We
use integration along closed loops as above and see that the integral
vanishes.
\end{proof}

\begin{lemma}\label{le:chizz}
\begin{equation}\label{eq:chizz}
\int_X d(\chi_{k\ol\jmath\, ; z z} \chi_{i\ol\ell\, ; \ol z} g^{\ol z
z} {dz}) =0.
\end{equation}
\end{lemma}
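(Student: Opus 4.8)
The plan is to proceed exactly as in Lemma~\ref{le:chizolz}. Write $\omega=\chi_{k\ol\jmath\,;zz}\,\chi_{i\ol\ell\,;\ol z}\,g^{\ol z z}\,dz$ for the $1$-form in question. As in the preceding lemmas $\omega$ is smooth on $X'$ and $d\omega\in L^1(X)$ (using Theorem~\ref{smooth}, Lemma~\ref{Sob} and the estimates below), so by Stokes' theorem applied on $X\setminus\bigcup_j\{|z-p_j|<\epsilon\}$ one has $\int_X d\omega=-\lim_{\epsilon\to 0}\sum_j\oint_{|z-p_j|=\epsilon}\omega$. Hence it is enough to establish, near each puncture (write $a=a_j$, $r=|z|$), the pointwise bound $|\omega|\lesssim r^{1-2a}|dz|$: then $\big|\oint_{|z|=\epsilon}\omega\big|\lesssim\epsilon\cdot\epsilon^{1-2a}=\epsilon^{2-2a}\to 0$, because $a<1$.

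For this bound, $|\chi_{i\ol\ell\,;\ol z}|\lesssim r^{1-2a}$ comes from Lemma~\ref{le:chiz} below, and $g^{\ol z z}=g_{\bf a}^{-1}\lesssim r^{2a}$ from the local form $g_{\bf a}=(\rho/|z|^{2a})|dz|^2$; the real point is that $\chi_{k\ol\jmath\,;zz}\,g^{\ol z z}$ stays \emph{bounded} near the puncture. Using $\Gamma^z_{zz}=\pt_z\log g_{\bf a}$ one computes $\chi_{k\ol\jmath\,;zz}\,g^{\ol z z}=\pt_z\!\big(\pt_z\chi_{k\ol\jmath}/g_{\bf a}\big)$. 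Now $\pt_z\chi_{k\ol\jmath}$ solves $\pt_{\ol z}(\pt_z\chi_{k\ol\jmath})=\chi_{k\ol\jmath\,;z\ol z}=g_{\bf a}(\chi_{k\ol\jmath}-\mu_k\mu_{\ol\jmath})$, whose right-hand side $f$ satisfies $|z|^{2a}|f|$ bounded (the metric blows up like $r^{-2a}$ while $\chi_{k\ol\jmath}$ and $\mu_k\mu_{\ol\jmath}$ are continuous, since $a>1/2$). As in the discussion preceding the lemma, Lemma~\ref{le:anti} with $\alpha=a$ yields a solution $\eta$ of $\pt_{\ol z}\eta=f$ with $|\eta|\lesssim r^{1-2a}$, and $\pt_z\chi_{k\ol\jmath}-\eta$ extends holomorphically over the puncture (it lies in $L^2$ by Lemma~\ref{Sob} and Lemma~\ref{le:anti}); call it $h$. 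Differentiating the Cauchy-transform formula for $\eta$ in the proof of Lemma~\ref{le:anti} — splitting the domain into the parts at distance $\lesssim|z|$ and $\gtrsim|z|$ from the singularity, with the cancellation of the constant part of $f$ in the near region handling the principal value — gives $|\pt_z\eta|\lesssim r^{-2a}$. Together with $|\pt_z\log g_{\bf a}|\lesssim r^{-1}$ (Remark~\ref{re:grealana}) this yields
\[
\Big|\pt_z\!\Big(\frac{\pt_z\chi_{k\ol\jmath}}{g_{\bf a}}\Big)\Big|\le\Big|\frac{\pt_z\eta+h'}{g_{\bf a}}\Big|+\big|(\eta+h)\,\pt_z(g_{\bf a}^{-1})\big|\lesssim r^{-2a}\cdot r^{2a}+r^{1-2a}\cdot r^{-1}\cdot r^{2a}\lesssim 1 .
\]
Hence $|\chi_{k\ol\jmath\,;zz}\,g^{\ol z z}|\lesssim 1$, the bound $|\omega|\lesssim r^{1-2a}|dz|$ follows, and the lemma is proved; together with Lemma~\ref{le:chizolz} this legitimizes the twofold partial integration leading to \eqref{eq:chizzpartial} in the conical case.

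The decisive and only new step is the pointwise control of the second derivative of $\chi$ at the conical singularity, which is not available a priori since $\chi$ only lies in $H_2^q$ for $q<q_0$; rewriting $\chi_{k\ol\jmath\,;zz}g^{\ol z z}=\pt_z(\pt_z\chi_{k\ol\jmath}/g_{\bf a})$ and splitting $\pt_z\chi_{k\ol\jmath}$ into an explicitly controlled $\debar$-antiderivative plus a holomorphic term is the conical replacement for the hyperbolicity identity $\chi_{;zz\ol z}=\chi_{;z\ol z z}-g\,\chi_{;z}$ used in the compact case. Should a logarithmic factor slip into the estimate for $|\pt_z\eta|$, it is harmless, since $\epsilon^{2-2a}\log(1/\epsilon)\to 0$; alternatively one may observe that $\pt_z(\pt_z\chi_{k\ol\jmath}/g_{\bf a})$ is a Calder\'on--Zygmund transform of the bounded function $\pt_{\ol z}(\pt_z\chi_{k\ol\jmath}/g_{\bf a})=(\chi_{k\ol\jmath}-\mu_k\mu_{\ol\jmath})-(\pt_z\chi_{k\ol\jmath}/g_{\bf a})\pt_{\ol z}\log g_{\bf a}$, hence lies in every $L^p_{\mathrm{loc}}$, so that the coefficient of $\omega$ divided by $|z|$ is $L^1$ near the puncture (H\"older against $r^{-2a}\in L^{p'}$, $p'<1/a$), whence $\liminf_{\epsilon\to 0}\oint_{|z|=\epsilon}|\omega|=0$ and the already existing limit vanishes.
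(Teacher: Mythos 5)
Your argument is essentially correct, but it takes a genuinely different route from the paper. The paper deliberately avoids any pointwise control of the second covariant derivative $\chi_{;zz}$: it proves instead Lemma~\ref{le:intddb}, via a monotonicity argument for the radial averages of $\pt\ol\pt\bigl(g^{\ol z z}\chi_{;z}\chi_{;\ol z}\bigr)$ (the integrand is bounded below by $-(|\mu|^2)_{;z}(|\mu|^2)_{;\ol z}\frac{\ii}{2}dz\wedge\ol{dz}$ up to an explicit correction, so $r\,\pt_r\wt\tau$ is monotone and its limit is pinned down by the a priori bound $g^{\ol z z}\chi_{;z}\chi_{;\ol z}\lesssim r^{2-2a}$), and then observes that the residue of \eqref{eq:chizz} equals minus the residue of \eqref{eq:chizolz}, which is already known to vanish. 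You instead attack $\chi_{k\ol\jmath\,;zz}\,g^{\ol z z}=\pt_z(\pt_z\chi_{k\ol\jmath}/g_{\bf a})$ head on and show it is bounded; this is a stronger conclusion, makes Lemma~\ref{le:intddb} superfluous for this purpose, and yields \eqref{eq:chizz} independently of \eqref{eq:chizolz}. The one step you must not leave implicit is the bound $|\pt_z\eta|\lesssim r^{-2a}$: the Beurling transform of a function that is merely $O(|\zeta|^{-2a})$ does \emph{not} satisfy such a pointwise bound (the principal value need not even converge absolutely), so you have to use that $f=g_{\bf a}(\chi_{k\ol\jmath}-\mu_k\mu_{\ol\jmath})$ is smooth away from the puncture with $|\nabla f|\lesssim r^{-2a-1}$ --- which follows from Lemma~\ref{le:chiz}, the local form of $\mu$ and Remark~\ref{re:grealana} --- to control the near-diagonal part of the singular integral at scale $|z|$; your parenthetical remark gestures at exactly this but does not supply the needed derivative estimate. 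Your fallback argument (Calder\'on--Zygmund theory gives $\chi_{k\ol\jmath\,;zz}\,g^{\ol z z}\in L^p_{\mathrm{loc}}$ for all $p<\infty$, hence the coefficient of $\omega$ divided by $|z|$ is integrable, hence $\liminf_{\epsilon\to 0}\oint_{|z|=\epsilon}|\omega|=0$, while the limit itself exists by Stokes because $d\omega\in L^1$) is complete as stated and arguably cleaner; it is the version I would keep.
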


We reduce the proof to the following statement, which shows that possible
residues in \eqref{eq:chizz} and \eqref{eq:chizolz} must be equal up to a
sign. However, we know already that the latter integral vanishes.

\begin{lemma}\label{le:intddb}
\begin{equation}\label{eq:intddb}
\lim_{\varepsilon \to 0}\int_{\{|z|<\epsilon  \}} \frac{\ii}{2}
\pt \ol \pt (\chi_{k\ol\jmath\, ; z}\chi_{i\ol\ell\, ; \ol z} g^{\ol z z} ) =0
\end{equation}
\end{lemma}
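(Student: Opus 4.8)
Here is my plan. The statement is local at a single puncture, so fix a holomorphic coordinate $z$ there as in Remark~\ref{re:grealana} (weight $1/2<a<1$) and put $F=\chi_{k\ol\jmath\,;z}\,\chi_{i\ol\ell\,;\ol z}\,g^{\ol zz}=(\pt_z\chi_{k\ol\jmath})(\pt_{\ol z}\chi_{i\ol\ell})/\ga$, a $C^\infty$ function on the punctured disk $U^*$ (recall $\chi_{k\ol\jmath\,;z}=\pt_z\chi_{k\ol\jmath}$ since $\chi_{k\ol\jmath}$ is a scalar). Since $\tfrac{\ii}{2}\pt\ol\pt F=-\tfrac{\ii}{2}\,d(\pt F)$ on $U^*$, Stokes' theorem on the annuli $\{\delta<|z|<\epsilon\}$ expresses the integral, up to sign, as $\tfrac{\ii}{2}\int_{|z|=\epsilon}\pt_z F\,dz-\tfrac{\ii}{2}\int_{|z|=\delta}\pt_z F\,dz$. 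The plan is therefore to establish the two near-puncture estimates $|\pt_z F|\lesssim|z|^{1-2a}$ and $|\pt_z\pt_{\ol z}F|\lesssim|z|^{-2a}$: the first bounds every circle integral by $O(\varrho^{2-2a})$, which tends to $0$ as $a<1$; the second shows $\pt\ol\pt F\in L^1$ near the puncture, legitimizing the passage $\delta\to 0$ and in fact already forcing $\lim_{\epsilon\to 0}\int_{|z|<\epsilon}\tfrac{\ii}{2}\pt\ol\pt F=0$ by absolute continuity of the Lebesgue integral.

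The estimates rest on three facts. (1) Lemma~\ref{le:chiz} gives $|\chi_{\bullet\,;z}|,|\chi_{\bullet\,;\ol z}|\lesssim|z|^{1-2a}$, and Remark~\ref{re:grealana} gives $\ga\sim|z|^{-2a}$ with $|\pt_z(1/\ga)|,|\pt_{\ol z}(1/\ga)|\lesssim|z|^{2a-1}$. (2) The hyperbolicity equation \eqref{eq:lap}, in the form $\chi_{k\ol\jmath\,;z\ol z}=\ga(\chi_{k\ol\jmath}-\mu_k\mu_{\ol\jmath})$ and likewise for $\chi_{i\ol\ell}$, yields $|\chi_{\bullet\,;z\ol z}|\lesssim|z|^{-2a}$, because $\chi$ is continuous and $|\mu_k\mu_{\ol\jmath}|\lesssim|z|^{4a-2}$ is bounded for $a>\tfrac12$ (Lemma~\ref{Sob}). (3) The pure second derivative is controlled as follows: the commutation identity $\chi_{k\ol\jmath\,;zz\ol z}=\chi_{k\ol\jmath\,;z\ol zz}-\ga\,\chi_{k\ol\jmath\,;z}$, combined with \eqref{eq:lap} and $\nabla\ga=0$, gives on $U^*$
\[
\pt_{\ol z}\bigl(\chi_{k\ol\jmath\,;zz}\bigr)=\chi_{k\ol\jmath\,;zz\ol z}=-\ga\,(\mu_k\mu_{\ol\jmath})_{;z},
\]
whose right-hand side is explicit in $\ga$ and in the holomorphic quadratic differentials defining $\mu_k,\mu_{\ol\jmath}$, and is $\lesssim|z|^{2a-3}$. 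Applying Lemma~\ref{le:anti} with $\alpha=\tfrac32-a\in(\tfrac12,1)$ produces a $\ol z$-antiderivative $\theta$ with $|\theta|\lesssim|z|^{2a-2}$; then $\chi_{k\ol\jmath\,;zz}-\theta$ is holomorphic on $U^*$ and, by Lemma~\ref{Sob}(ii) together with the bound on $\theta$, belongs to $L^p(U)$ for some $p>1$, hence has at most a simple pole. Thus $|\chi_{k\ol\jmath\,;zz}|\lesssim|z|^{-1}$, so $|\pt_z^2\chi_{k\ol\jmath}|=|\chi_{k\ol\jmath\,;zz}+\Gamma\,\pt_z\chi_{k\ol\jmath}|\lesssim|z|^{-2a}$, and the conjugate argument gives $|\pt_{\ol z}^2\chi_{i\ol\ell}|\lesssim|z|^{-2a}$.

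With (1)--(3) in hand, expanding $\pt_z F$ and $\pt_z\pt_{\ol z}F$ by the Leibniz rule exhibits each summand as a product of a (first or second) derivative of some $\chi_\bullet$, a first derivative of another $\chi_\bullet$, and a derivative of $1/\ga$; a routine count of the exponents $1-2a$, $-2a$ (resp.\ $-2a-1$ after one more differentiation), and $2a$, $2a-1$ then gives $|\pt_z F|\lesssim|z|^{1-2a}$ and $|\pt_z\pt_{\ol z}F|\lesssim|z|^{-2a}$, which completes the argument. I expect the main obstacle to be exactly step (3): the hyperbolicity PDE controls only the mixed second derivative of $\chi$, and the pure one must be recovered through the explicit $\ol\pt$-equation above, Lemma~\ref{le:anti}, and a removable-singularity step in which one must be careful that, in the range $a>\tfrac12$, the holomorphic remainder need not lie in $L^2(U)$ — as it does for $a<\tfrac12$, where one could invoke the $L^2$-clause of Lemma~\ref{le:anti} directly — but only in $L^p(U)$ for some $p>1$, which still excludes a pole of order $\ge 2$ and therefore suffices.
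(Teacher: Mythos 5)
Your argument is correct, but it takes a genuinely different route from the paper's. The paper never estimates the pure second covariant derivative $\chi_{;zz}$ pointwise: it restricts to the diagonal case $k=i=\jmath=\ell$ (recovering the general case by polarization), expands $\eta=\frac{\ii}{2}\pt\ol\pt(g^{\ol z z}\chi_{;z}\chi_{;\ol z})$ via the hyperbolicity equation, and observes that every term except $-(|\mu|^2)_{;z}(|\mu|^2)_{;\ol z}$ is manifestly nonnegative, so that $\tau=g^{\ol z z}|\chi_{;z}|^2-c\,r^{8a-4}$ satisfies $\pt\ol\pt\tau\geq0$; the radial average then has $r\,\pt_r\wt\tau$ monotone with a limit, and the boundedness of $\tau$ (from Lemma~\ref{le:chiz}) forces that limit to be finite and $\geq 0$, so the annulus integrals vanish. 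You instead push the paper's own $\ol\pt$-antiderivative technique (used there for $a^z_k$ and for $\chi_{;z}$) one order higher: from $\pt_{\ol z}\chi_{;zz}=-g(\mu_k\mu_{\ol\jmath})_{;z}=O(|z|^{2a-3})$, Lemma~\ref{le:anti} with $\alpha=\tfrac32-a$, and the removable-singularity observation, you extract the pointwise bound $|\chi_{;zz}|\lesssim|z|^{-1}$, whence $\pt\ol\pt F\in L^1$ and the circle integrals are $O(\epsilon^{2-2a})$. I checked the exponent bookkeeping (including the third-derivative term $\pt_z\bigl(g(\chi-\mu\mu)\bigr)=O(|z|^{-2a-1})$, which your ``one more differentiation'' covers) and the removable-singularity step: you are right that the $L^2$ clause of Lemma~\ref{le:anti} is unavailable since $\chi_{;zz}\in L^p$ only for $p<1/a<2$, but already $L^1$ kills all Laurent coefficients $c_n$ with $n\leq-2$, so ``at most a simple pole'' stands. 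Your approach buys more: it handles arbitrary indices without polarization (the paper's positivity argument genuinely needs the diagonal case), it directly yields the vanishing of the individual circle integrals $\int_{|z|=\epsilon}\pt(\chi_{;z}\chi_{;\ol z}g^{\ol z z})$ that the subsequent proof of Lemma~\ref{le:chizz} actually invokes, and the bound $|\chi_{;zz}|\lesssim|z|^{-1}$ would render Lemmas~\ref{le:chizolz} and \ref{le:chizz} immediate; the paper's argument is softer and avoids any second-order pointwise control. One small point to make explicit: passing from $\chi\in H^q_2$ (Lemma~\ref{Sob}(ii)) to $\chi_{;zz}\in L^q$ also uses $|\Gamma\,\pt_z\chi|\lesssim|z|^{-2a}\in L^q$ for $q<1/a$, which follows from Lemma~\ref{le:chiz} and $|\Gamma|\lesssim|z|^{-1}$.
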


\begin{proof}[Proof of \eqref{eq:chizz}]
We expand the integrand of \eqref{eq:intddb} and find:
$$
0=\lim_{\epsilon\to 0} \int_{|z|=\epsilon} \pt\left(\chi_{k\ol\jmath\, ; z}
\chi_{i \ol\ell\, ; \ol z} g^{\ol z z} \right)
= \lim_{\epsilon\to 0} \int_{|z|=\epsilon} \left(\chi_{k\ol\jmath\, ; zz}\chi_{i \ol\ell\, ; \ol z} +
\chi_{k\ol\jmath\, ; z}\chi_{i \ol\ell\, ; \ol z z} \right)g^{\ol z z} dz
$$

\end{proof}

\begin{lemma}\label{le:chiz}
\begin{equation}\label{eq:chiz}
|\chi_{k\ol \jmath\, ;z}| \lesssim  r^{-2a+1}
\end{equation}
\end{lemma}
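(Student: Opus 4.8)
The plan is to reduce to a local computation near each conical puncture and to read the bound off the elliptic equation for $\chi_{k\ol\jmath}$. Away from the punctures $\chi_{k\ol\jmath}$ is smooth, so only the behaviour at a puncture $p_j$ matters; there $a$ denotes the weight $a_j\in(1/2,1)$ and $r=|z|$ in a holomorphic coordinate centered at $p_j$ in which $\ga=(\rho(z)/|z|^{2a})|dz|^2$ with $\rho$ continuous and positive. Since $\chi_{k\ol\jmath}=\langle V_k,V_j\rangle_{\omega_\cX}$ is a function, $\chi_{k\ol\jmath\,;z}=\pt_z\chi_{k\ol\jmath}$, and rewriting \eqref{eq:lap} in this coordinate gives
\[
\pt_z\pt_{\ol z}\,\chi_{k\ol\jmath}=\frac{\rho(z)}{|z|^{2a}}\bigl(\chi_{k\ol\jmath}-\mu_k\mu_{\ol\jmath}\bigr).
\]

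The next step is to check that the right-hand side is $O(|z|^{-2a})$, i.e.\ that $\chi_{k\ol\jmath}-\mu_k\mu_{\ol\jmath}$ is bounded near $p_j$. For the first term, Lemma~\ref{Sob}(ii) applied to each component $\chi_{k\ol\jmath}$ — the proof is identical, with \eqref{eq:lap} in place of \eqref{laplace} — gives $\chi_{k\ol\jmath}\in H^q_2(X)$ for $1\le q<q_0$, and since every $a_j>1/2$ one has $q_0=\min_j(1/a_j)>1$; hence $\chi_{k\ol\jmath}$ is bounded near $p_j$, and $\pt_z\chi_{k\ol\jmath}\in H^q_1(X)\subseteq L^{2q/(2-q)}(X)$ with $2q/(2-q)>2$, so $\pt_z\chi_{k\ol\jmath}\in L^2(X)$, in particular square integrable near $p_j$. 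For the second term, as in the proof of Lemma~\ref{le:phiL1} one has $|\mu_k\mu_{\ol\jmath}|\lesssim r^{4a-2}$, which is bounded near $p_j$ precisely because $a>1/2$.

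Finally I would apply Lemma~\ref{le:anti} with $\alpha=a$ to $v:=\pt_z\chi_{k\ol\jmath}$: by elliptic regularity $v$ is smooth on the punctured disk, by the previous step it solves $\pt_{\ol z}v=f$ with $|z|^{2a}f$ bounded and lies in $L^2$ near $p_j$, so the last assertion of that Lemma forces $|z|^{2a-1}v$ to be bounded near $p_j$, that is $|\chi_{k\ol\jmath\,;z}|=|\pt_z\chi_{k\ol\jmath}|\lesssim r^{1-2a}$, which is \eqref{eq:chiz}.

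I do not expect a genuine obstacle: the argument is essentially a single application of the $\ol\pt$-lemma~\ref{le:anti}. The only points requiring care are its two hypotheses — that $\pt_z\chi_{k\ol\jmath}$ is square integrable near the puncture (which rests on $\chi_{k\ol\jmath}\in H^q_2$ with $q>1$ and the two-dimensional Sobolev embedding) and that $\mu_k\mu_{\ol\jmath}$ stays bounded at the puncture — and both, together with the requirement $\alpha<1$ in Lemma~\ref{le:anti}, are precisely what makes the standing assumption $1/2<a_j<1$ of this section indispensable here.
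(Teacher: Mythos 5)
Your proposal is correct and follows essentially the same route as the paper: rewrite \eqref{eq:lap} as $\pt_z\pt_{\ol z}\chi_{k\ol\jmath}=(\chi_{k\ol\jmath}-\mu_k\mu_{\ol\jmath})g_{\bf a}$, observe that $\chi_{k\ol\jmath}-\mu_k\mu_{\ol\jmath}$ is bounded near the puncture (using $\chi_{k\ol\jmath}\in H^q_2$ and $a>1/2$), and then apply the $\ol z$-anti-derivative Lemma~\ref{le:anti} to $\pt_z\chi_{k\ol\jmath}\in L^2$. Your explicit verification of the two hypotheses of Lemma~\ref{le:anti} simply fills in what the paper compresses into ``the argument involving the $\ol z$-anti-derivative again gives the claim.''
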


\begin{proof}
We know that
$$
\pt\ol\pt \chi_{k\ol\jmath}= (\chi_{k\ol\jmath} - \mu_k\mu_\ol\jmath) g.
$$
Furthermore
$$
\chi_{k \ol \jmath} - \mu_k \mu_{\ol\jmath}
$$
is continuous since $\chi_{k \ol \jmath} \in H^p_2(X)$ and the continuity
of $\mu_k \mu_{\ol\jmath}$ follows since $a> 1/2$. Now the argument
involving the $\ol z$-anti-derivative again gives the claim.
\end{proof}

{\em Proof of Lemma~\ref{le:intddb} .} At this point, we assume that
$k=i=j =\ell$. This case is sufficient, because the curvature formula will
follow as usual from this case by polarization. We set $\chi
=\chi_{k\ol\jmath}$  and $|\mu|^2= \mu_i\mu_{\ol\jmath}$ for short. The
integrand equals
\begin{gather*}
\eta := \frac{\ii}{2}\pt \ol\pt (g^{\ol z z} \chi_{;z}\chi_{;\ol z})  \hspace{7cm} \\
= g^{\ol z z} \left( \chi_{; z \ol z z}\chi_{\ol z} + \chi_{; z \ol
z}\chi_{\ol z z} + \chi_{;zz}\chi_{;\ol z\ol z} + \chi_{; z}\chi_{; \ol
z\ol z z} \right) \frac{\ii}{2} dz \wedge \ol{dz}.
\end{gather*}
Now we use \eqref{laplace} i.e.\ \eqref{eq:laplace} on the integrand and
the following formula
$$
\chi_{;\ol z\ol z z} = \chi_{;\ol z z \ol z } - \chi_{;\ol z} R^{\ol z}_{\; \ol z \,\ol z z}
= \chi_{;\ol z z \ol z } - g_{z \ol z} \chi_{;\ol z} = - g_{z \ol z} (|\mu|^2)_{; \ol z}.
$$
Hence
\begin{gather*}
\eta = \Big( g_{z \ol z}  (\chi-|\mu|^2)^2 + g^{\ol z z} \chi_{; z
z}\chi_{; \ol z \, \ol z}  \hspace{5cm}\\  + (\chi-|\mu|^2)_{;z}
(\chi-|\mu|^2 )_{; \ol z} - (|\mu|^2)_{; z}(|\mu|^2)_{; \ol z}\Big)
\frac{\ii}{2}
dz\wedge\ol{dz} \\
\geq - (|\mu|^2)_{; z}(|\mu|^2)_{; \ol z} \frac{\ii}{2} dz\wedge\ol{dz}.
\end{gather*}
Again we realize a harmonic Beltrami differential as a quotient of an
anti-holomorphic quadratic differential with at most a single pole by the
metric tensor and use again the analyticity property of
Remark~\ref{re:grealana}. This implies
$$
|(|\mu|^2)_{;z}| \lesssim r^{4a-3}
$$
so that
$$
(|\mu|^2)_{;z} (|\mu|^2)_{;\ol z} \lesssim r^{8a -6}.
$$
So for some $c, r_0>0$  and all $0<|z| \leq r_0$ , we have
$$
\frac{\ii}{2}\pt\ol\pt\Big( g^{\ol z z} \chi_{;z}\chi_{;\ol z} - c\cdot r^{8a -4}\Big)\geq 0.
$$
Observe that by our assumption $r^{8a -4} \to 0$ with $r\to 0$. We write
$(\ii/2)\pt\ol\pt \tau$ for the above expression. In terms of polar
coordinates $z= r \cdot exp(\ii\varphi)$ we set
$$
\wt \tau (r) = 
\int_0^{2\pi} \tau(r,\varphi)d\varphi.
$$
Hence for all $0<\delta<\epsilon$ we have
$$
0\leq \int_{\delta<|z|<\epsilon} \frac{\ii}{2} \pt\ol\pt \tau =
\int_\delta^\epsilon \frac{\pt}{\pt r}\left( r \cdot \frac{\pt}{\pt r} \wt \tau \right) dr
= r \cdot \frac{\pt}{\pt r} \wt \tau \Big|_\delta^\epsilon.
$$
Up to a multiplicative constant the contribution of $-c \cdot r^{8a -4}$
to the integral amounts to
$$
r^{8a-4}\big|^\epsilon_\delta,
$$
which tends to zero with $\epsilon,\delta \to 0$.

The monotonicity implies the existence of
$$
 \ell =\lim_{r \to 0} r \frac{\pt\wt \tau }{\pt r} \geq -\infty.
$$
If we assume $\ell \leq -c' <0$, we see immediately that
$$
\wt\tau(r)\geq c'' - c'\log r
$$
for some real number $c''$ so that $\wt \tau \to \infty$ with $r\to 0$. On
the other hand, it follows from \eqref{eq:chiz} that
$$
\chi_{;z}\chi_{;\ol z} g^{\ol z z}\lesssim r^{2-2a}.
$$
So
$$
\lim_{r\to 0}  r \frac{\pt\wt \tau }{\pt r} \geq 0
$$
is a finite number and
\begin{gather*}
\lim_{\tiny\vbox{\hbox{$\epsilon \to 0$} \hbox{$\delta \to
0$}\hbox{$\epsilon>\delta$}}} \int_{\delta<|z|<\epsilon}
\frac{\ii}{2}\pt\ol\pt\left(\|\chi_{;z}\|^2 \right)=
\lim_{\tiny\vbox{\hbox{$\epsilon \to 0$} \hbox{$\delta \to
0$}\hbox{$\epsilon>\delta$}}} \int_\delta^\epsilon
\frac{1}{r}\frac{\pt}{\pt r}\left(r\cdot \frac{\pt\tau}{\pt r}\right) r dr\\
\hspace{5cm} = \lim_{\tiny\vbox{\hbox{$\epsilon \to 0$} \hbox{$\delta \to
0$}\hbox{$\epsilon>\delta$}}}
 \left.\left( r \cdot \frac{\pt\wt \tau}{\pt r}  \right)\right|^\epsilon_\delta =0.
\end{gather*}
\qed

Georg Schumacher

\vskip .5mm Fachbereich Mathematik und Informatik der Philipps-Universit{\"a}t,
\vskip .5mm Hans-Meerwein-Strasse, Lahnberge, \vskip .5mm D-35032 Marburg,
Germany \vskip .5mm {\tt schumac@mathematik.uni-marburg.de}

\bigskip

Stefano Trapani \vskip .5mm Dipartimento di Matematica, Universit\'a di Roma
Tor Vergata \vskip .5mm Via della Ricerca Scientifica, I-00133 Roma Italy
\vskip .5mm {\tt trapani@mat.uniroma2.it}

\end{document}